\documentclass[11pt,reqno]{amsart}

\usepackage[T1]{fontenc}
\usepackage[utf8]{inputenc}
\usepackage{lmodern}
\usepackage{microtype}
\usepackage{amsmath,amssymb,amsthm,mathtools,mathrsfs}
\usepackage{enumitem}
\usepackage{booktabs}
\usepackage{array}
\usepackage{needspace}
\usepackage{xcolor}
\usepackage{float}
\usepackage{tikz}
\usetikzlibrary{arrows.meta,positioning,calc}
\usepackage[colorlinks=true,linkcolor=blue!55!black,citecolor=blue!55!black,
            urlcolor=blue!55!black]{hyperref}
\usepackage[nameinlink,capitalise,noabbrev]{cleveref}
\hypersetup{
  pdftitle={Zeta Renormalization and Pressure at Infinity for an Infinitely Cusped Tree Lattice},
  pdfauthor={Sanghoon Kwon},
  pdfsubject={Local determinants, pressure at infinity, and renormalized zeta functions},
  pdfkeywords={countable Markov shift, pressure at infinity, graph of groups, infinitely cusped tree lattice, local determinant, renormalized Euler product}
}

\numberwithin{equation}{section}

\newtheorem{theorem}{Theorem}[section]
\newtheorem{proposition}[theorem]{Proposition}
\newtheorem{lemma}[theorem]{Lemma}
\newtheorem{corollary}[theorem]{Corollary}

\theoremstyle{definition}
\newtheorem{definition}[theorem]{Definition}

\theoremstyle{remark}
\newtheorem{remark}[theorem]{Remark}

\newcommand{\OrE}{\operatorname{OE}}
\newcommand{\Tr}{\operatorname{Tr}}

\newcommand{\cF}{\mathcal F}
\newcommand{\cC}{\mathcal C}

\newcommand{\one}{\mathbf 1}

\newcommand{\Li}{\operatorname{Li}_2}

\title[Zeta Renormalization and Pressure at Infinity]{Zeta Renormalization and Pressure at Infinity\\
for an Infinitely Cusped Tree Lattice}

\author{Sanghoon Kwon}
\address{Department of Mathematics Education, Catholic Kwandong University,
Republic of Korea}
\email{skwon@cku.ac.kr, shkwon1988@gmail.com}
\date{August 26, 2026}

\subjclass[2020]{Primary 37B10, 37D35; Secondary 05C63, 20E08, 47B10, 11M36}
\keywords{countable Markov shift, pressure at infinity, graph of groups,
infinitely cusped tree lattice, local determinant, first return,
renormalized Euler product, thermodynamic formalism}

\begin{document}

\begin{abstract}
We study weighted periodic-orbit zeta functions for an infinitely cusped tree
lattice $Y_q$, where $q\ge2$ is even and the quotient is a one-sided comb.
The global Euler product fails
coefficientwise because infinitely many primitive cycles have length four.
A first-return determinant at a finite directed-edge set nevertheless exists,
and stationary Schur elimination gives an algebraic formula for the root local
zeta and its dominant poles.  For the two-step multiplicity potential we
compute the Gurevich pressure $P_G=2\log(q+1)$ and pressure at infinity
$P_\infty=\log(4q)$, yielding strong positive recurrence and exponential
local-orbit asymptotics.  The height-damped transition operator is trace
class.  After subtraction of an explicit integrated-pressure counterterm,
the inverse Fredholm determinant has a locally uniform finite part, expressed
by a convergent dilogarithmic product and covariant under changes of height.
\end{abstract}

\maketitle

\tableofcontents

\section{Introduction}

For a finite-state dynamical system, periodic orbits are encoded by a zeta
function and, in the Bowen--Lanford setting, by a finite determinant
\cite{BowenLanford1970}.  Bass--Ihara theory supplies the analogous package for
finite graphs and cocompact tree lattices.  Noncompact extensions with a finite
core and finitely many cuspidal rays still retain enough finite-state structure
for a global determinant: examples include Deitmar--Kang's arithmetic tree
quotients \cite{DeitmarKang2018} and Hong--Kwon's geometrically finite graphs
of groups \cite{HongKwon2025}.

We consider a genuinely infinite-cusp regime.  Paulin's description of
geometrically finite tree actions \cite{Paulin2004} and the Markov codings of
Broise-Alamichel and Paulin \cite{BroisePaulinCoding2007,BroisePaulinModular2007}
provide the geometric and symbolic background.  Broise-Alamichel, Parkkonen
and Paulin subsequently developed counting and equidistribution for tree
geodesic flows and constructed lattices with prescribed quotient growth and
prescribed spaces of ends
\cite{BroiseParkkonenPaulin2016,BroiseParkkonenPaulin2019,
BroiseParkkonenPaulin2021}.  One explicit example has a horizontal
ray with a vertical cuspidal ray attached at every vertex.  Its quotient is a
rooted one-sided comb-shaped tree with quadratic vertex-ball growth and
infinitely many cusps.  Its repeated local geometry makes the zeta obstruction
coefficientwise rather than analytic, so there is no global formal series to
continue.
The graph-of-groups realization, finite-covolume calculation and the precise
model-specific meaning of a vertical cusp ray are given in
\cref{prop:lattice-realization,rem:cusp-convention}.

Because the obstruction is coefficientwise, analytic continuation cannot
recover the missing global zeta.  We replace it by a local first-return
determinant and by a renormalized finite part obtained from a height-regularized
family.  The proof combines a stationary Schur complement, pressure at
infinity and a uniform orbit-family Abel estimate; finite-boundary and
translated-orbit formulations separate the general mechanisms from the
geometry of the example.

\paragraph{Analytic and notational conventions.}
All power series in $u$ are centered at $u=0$.  We use the classical
dilogarithm
\begin{equation}\label{eq:dilog-definition}
 \Li(z)=\sum_{k\ge1}\frac{z^k}{k^2}
 =-\int_0^z\frac{\log(1-t)}{t}\,dt,
 \qquad |z|<1.
\end{equation}
Thus $\Li(z)=z+z^2/4+z^3/9+\cdots$.  We use the logarithm branch satisfying
$\log(1)=0$; only its analytic germ at
the origin is needed below.  A series $\sum_j f_j$ of holomorphic functions
converges \emph{normally} on a domain $D$ if
$\sum_j\sup_{u\in K}|f_j(u)|<\infty$ for every compact $K\subset D$.
For products of factors equal to one at the origin, normal convergence means
normal convergence of the corresponding logarithmic series with the branch
vanishing at the origin.
An analytic germ is \emph{algebraic} over a rational-function field if it
satisfies a nonzero polynomial equation with coefficients in that field.  We
write $[u^m]F(u)$ for the coefficient of $u^m$ in $F(u)$ and use
$\mathbb N_0=\{0,1,2,\ldots\}$.

\subsection{Main results}

Let $(Y,i)$ be a locally finite edge-indexed graph, let $\OrE(Y)$ be its set of
oriented edges, and write $o(e)$, $t(e)$ and $\bar e$ for the origin, terminus
and reverse of an oriented edge $e$.  Let $\one_A$ denote the indicator of a
condition $A$.  The weighted non-backtracking operator $B$, indexed by
$\OrE(Y)$, has transition weights
\[
 B_{e,f}=\begin{cases}
 i(f)-\one_{\{f=\bar e\}},&t(e)=o(f),\\
 0,&\text{otherwise}.
 \end{cases}
\]
This is the quotient-level multiplicity of non-backtracking lifts in the
Bass--Serre tree.

The obstruction theorem in \cref{thm:obstruction} shows that infinitely many
length-four cycles make $\Tr(B^4)$ diverge.  For a finite observation set
$E\subset\OrE(Y)$, however, the first-return matrix $\cF_E(u)$ is
coefficientwise finite and
\[
 Z_E(u)=\det(I-\cF_E(u))^{-1}
\]
is the Euler product over primitive positive weighted cycles meeting $E$; see
\cref{sec:local}.

For the one-sided comb model, a single vertical cuspidal ray can be
eliminated exactly and the remaining two-state horizontal problem can be
solved algebraically.  The next theorem states the outcome; the successive
quantities in the formula are introduced and computed in
\cref{sec:explicit}.

\begin{theorem}[Algebraic root local zeta]
\label{thm:intro-algebraic}
For this model with even index parameter $q\ge2$, the local zeta based at
the root horizontal edge $a_0$ is
\[
 Z_{a_0}(u)=\frac1{1-K(u)x(u)}.
\]
Here $x$ is the branch through $x(0)=0$ of the quadratic first-passage
equation obtained from the two-state stationary cell, and $K$ is the explicit
root correction; see \cref{eq:xquadratic,eq:xclosed,eq:Kroot}.  In particular
$Z_{a_0}$ is algebraic of degree at most two over the rational-function field
$\mathbb Q(q,u)$.  Its Taylor series has exact radius $(q+1)^{-1}$, and its
two dominant singularities are simple poles at $u=\pm(q+1)^{-1}$.
\end{theorem}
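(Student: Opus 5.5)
The plan is to compute the first-return function to the single edge $a_0$ as a sum over first-return excursions, and to organize that sum through the stationary structure of the comb. Since $E=\{a_0\}$ is a singleton, $\cF_{a_0}(u)$ is a scalar power series. Each excursion leaving $a_0$ either travels into the horizontal ray, possibly descending into vertical cusp rays along the way, or enters the cusp ray attached at the root. The claim $Z_{a_0}=(1-K x)^{-1}$ then amounts to the factorization $\cF_{a_0}(u)=K(u)x(u)$. Here $x(u)$ is the weighted first-passage generating function for moving one step along the horizontal ray and returning. The factor $K(u)$ collects the finitely many root-specific weights: the edge indices at the root and the contribution of the root cusp.

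\textbf{Step 1: eliminate the vertical cusp rays.} Each vertical ray has indices that are constant along the ray; this is the model-specific cusp convention of \cref{rem:cusp-convention}. An excursion into such a ray and back is a one-dimensional non-backtracking problem with constant weights. In the non-backtracking setting, once one goes up a ray one must reach the end or turn around, and turning around is allowed only through the index multiplicity $i(f)-\one_{\{f=\bar e\}}$ at vertices with nontrivial stabilizer. I would therefore write its generating function $c(u)$ as a geometric series. Substituting $c(u)$ as an effective loop weight at each horizontal vertex is the Schur complement step. It reduces $B$ to a two-state transfer in the forward and backward horizontal directions at each vertex, with entries rational in $u$, $q$ and $c(u)$.

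\textbf{Step 2: translation invariance gives the quadratic.} Away from the root every horizontal vertex is identical. Hence the first-passage series $x$ from vertex $n$ back to $n$, starting in the direction of $n+1$, satisfies a renewal equation of the form
\[
x=\alpha(u)+\beta(u)\,x+\gamma(u)\,x^2 .
\]
The linear term corresponds to the cusp detour, and the quadratic term to a nested excursion from $n+1$. This is \cref{eq:xquadratic}. I would take $x$ to be the branch with $x(0)=0$, given by \cref{eq:xclosed}. The root edge differs from the stationary cell only in its local indices, so one more Schur step produces $K(u)$, rational in $u$ and $q$, as in \cref{eq:Kroot}. Algebraicity of degree at most two over $\mathbb Q(q,u)$ then follows, since $Z_{a_0}$ is a rational function of $x$.

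\textbf{Step 3: radius and poles, the main obstacle.} There are two candidate singularities: the branch point of $x$, where the discriminant vanishes, and zeros of $1-Kx$. I expect the bottleneck to be showing three things. First, the discriminant's zeros lie strictly outside $|u|\le (q+1)^{-1}$; this is consistent with $P_\infty=\log(4q)<2\log(q+1)$, because the branch point corresponds to escape along the horizontal ray. Second, $1-K x$ vanishes at $u=(q+1)^{-1}$, which I would check by direct substitution. Third, there are no other zeros on that circle. Evenness of $q$, combined with the bipartite-type parity of the comb, should make $\cF_{a_0}$ a series in $u^2$. That gives the pole at $-(q+1)^{-1}$ by symmetry. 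For the absence of other zeros, I would use the nonnegativity of the coefficients together with aperiodicity in $u^2$, which gives $|\cF(u)|<\cF(|u|)$ off the real axis. Finally, I would verify that these zeros are simple by checking that the derivative of $Kx$ with respect to $u^2$ is nonzero there, which follows from strict monotonicity of a positive series.
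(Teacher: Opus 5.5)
Your overall plan is the paper's: eliminate each cusp to $V$, take a two-channel cell Schur complement, get a stationary quadratic for $x$ and a rational root factor $K$ with $\cF_{a_0}=Kx$, then substitute at $u_0=(q+1)^{-1}$ and use Pringsheim, the triangle inequality and evenness. However, Step 2 does not follow from Step 1, and the justification you give for it is exactly the naive scalar recursion that fails here.

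\textbf{The gap in Step 2.} After a nested excursion to the right, the path returns to $v_n$ along $\bar a_n$, not in the left-arrival state $a_{n-1}$. The non-backtracking weights from these two states differ:
\begin{itemize}
\item the left exit has $b(\bar a_n,\bar a_{n-1})=q$ instead of $b(a_{n-1},\bar a_{n-1})=q-1$;
\item an immediate second right excursion has $b(\bar a_n,a_n)=0$ instead of $b(a_{n-1},a_n)=1$.
\end{itemize}
So the generating function of the rest of the first passage after a nested excursion is not $x$, and ``nested excursion gives $x\cdot x$'' miscounts. Moreover, right excursions can recur arbitrarily often, separated by cusp detours, so no direct path decomposition uses at most two of them.

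\textbf{The missing step.} Carry a second unknown $y$, the first passage from the right-arrival state. With the cell entries
\[
 \alpha=u(q-1+qV),\quad \beta=u(1+V),\quad \gamma=uq(1+V),\quad \delta=uV,
\]
one gets $x=\alpha+\beta xy$ and $y=\gamma+\delta xy$, as in \eqref{eq:xy-system}. Eliminating $y$ gives
\[
 x=\alpha+u^2\bigl(q+(q+1)V\bigr)x+uVx^2 .
\]
This has your shape, but it is not a path decomposition. The linear coefficient is $\beta\gamma-\alpha\delta$, obtained only after cancellation. The roles are also roughly reversed from your reading: the quadratic coefficient is the cusp term $uV$, while the linear coefficient contains the pure right-excursion term $u^2q$. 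Step 3 rests on exact values, so these coefficients must be right. Those values are $x(u_0)=1$, $K(u_0)=1$, and, with $t=u^2$, the discriminant $(1-t)^2(1-4qt)/(1-(q+1)t)^2$.

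\textbf{Smaller points in Step 3.}
\begin{itemize}
\item The candidate singularities also include the pole of $K$ at $t=2/(q^2+2q+2)$. You must check that this exceeds $(q+1)^{-2}$.
\item The exact radius needs $1-Kx\neq0$ on the whole open disk, not just no extra zeros on the circle. This follows from $|Kx(u)|\le Kx(|u|)<Kx(u_0)=1$.
\item Evenness of $q$ plays no role in the parity of $\cF_{a_0}$; that comes from bipartiteness alone. Evenness only makes $r=q/2+1$ an integer.
\item The pressure gap can only be a heuristic here, since $P_G=2\log(q+1)$ is derived from this very theorem.
\end{itemize}
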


Because the model is bipartite, recurrence is studied on the communicating
component of $a_0$ for $B^2$, with the locally constant potential
$\phi_q(e,f)=\log\!\bigl((B^2)_{e,f}\bigr)$.  Precise definitions are given in
\cref{def:quotient-edge-shift,def:squared-root-component,def:multiplicity-potential}.

\begin{theorem}[Pressure gap and strong positive recurrence]
\label{thm:intro-pressure-gap}
The Gurevich pressure and pressure at infinity of the two-step multiplicity
potential are
\[
 P_G(\phi_q)=2\log(q+1),
 \qquad
 P_\infty(\phi_q)=\log(4q).
\]
In particular $P_\infty(\phi_q)<P_G(\phi_q)$, so $\phi_q$ is strongly
positive recurrent.  Moreover, put $v=u^2$ and define $N_{a_0}(n)$ by
\[
 v\frac{d}{dv}\log Z_{a_0}(\sqrt v)
 =\sum_{n\ge1}N_{a_0}(n)v^n.
\]
There is a constant $\kappa_q<(q+1)^2$ such that
\[
 N_{a_0}(n)=(q+1)^{2n}+O(\kappa_q^n).
\]
\end{theorem}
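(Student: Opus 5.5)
The plan is to read both pressures directly off generating functions of the first-return structure at $a_0$. For $P_G$ I would use the characterization of Gurevich pressure as the exponential growth rate of the weighted periodic sums $Z_n(\phi_q,a_0)=\sum_{\sigma^n w=w,\,w_0=a_0}e^{S_n\phi_q(w)}$. In $B^2$ language this is the diagonal entry $(B^{2n})_{a_0,a_0}$. By the renewal relation, $\sum_n (B^{2n})_{a_0,a_0}v^n=(1-\mathcal F_{a_0}(\sqrt v))^{-1}$ with $\mathcal F_{a_0}=K x$, so $P_G$ is minus the logarithm of the radius of convergence in $v$. By \cref{thm:intro-algebraic} the radius in $u$ is $(q+1)^{-1}$, which gives $(q+1)^{-2}$ in $v$, hence $P_G=2\log(q+1)$. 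Since $\log Z_{a_0}=-\log(1-Kx)$, the coefficients $N_{a_0}(n)$ are exactly the trace-type weighted counts of periodic orbits through $a_0$. That is the identification needed at the end.

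For $P_\infty$ I would use the Ruette/Iommi--Jordan--Todd definition: the infimum over finite sets $F$ of the growth rate of weighted loops that avoid $F$, or equivalently the growth rate of the first-return (excursion) series of orbits that leave a finite set. The main tool is the Schur elimination from the explicit section. Eliminating a vertical cusp ray yields its excursion generating function, which is algebraic with a square-root branch point. For the depth-$n$ non-backtracking walk up and down a ray with index $i=q$ on the outward edge, the count of excursions grows like the Catalan-type rate $(2\sqrt{q})^{2}$ per two steps. The horizontal tail is handled in the same way. Concretely, I would show that for $F_N=\{$edges of height $\le N\}$ the loops avoiding $F_N$ live in the ray and tail sub-shifts. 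Their $B^2$-generating functions are the stationary quantities $x(u)$, or cusp analogues, whose singularity comes from the discriminant of the quadratic \cref{eq:xquadratic} at $v=(4q)^{-1}$ rather than from a pole. This gives growth rate $4q$ independently of $N$, so $P_\infty=\log(4q)$. The step I expect to be the main obstacle is the lower bound for $P_\infty$, namely that loops far out really achieve $4q$ and not less. I would get it by exhibiting explicit up–down excursions along one cusp ray beyond height $N$ and counting them with ballot numbers. For the upper bound I would show that the discriminant of the quadratic is the only singularity of the escaping generating functions inside $|v|<(4q)^{-1}$.

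The inequality $4q<(q+1)^2$ is $(q-1)^2>0$, which holds for $q\ge2$. Strong positive recurrence then follows from the gap criterion $P_\infty<P_G$ (Sarig / Ruette), which I assume is available in the paper's pressure section. For the asymptotic I would write $v\frac{d}{dv}\log Z_{a_0}=\frac{v\,\frac{d}{dv}(Kx)}{1-Kx}$ as a function of $v$. By \cref{thm:intro-algebraic} its only singularity on $|v|=(q+1)^{-2}$ is a simple pole at $v=(q+1)^{-2}$, since the poles $u=\pm(q+1)^{-1}$ merge. The residue of the logarithmic derivative is the pole order, here $1$, so the principal part is $\frac{(q+1)^2 v}{1-(q+1)^2 v}$. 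This contributes exactly $(q+1)^{2n}$. The remainder is holomorphic in $|v|<\kappa_q^{-1}$ for some $\kappa_q$ with $\max(4q,\ \text{next zero modulus}^{-1})<\kappa_q<(q+1)^2$. This uses the branch point at $v=(4q)^{-1}$ together with the finitely many other zeros of $1-Kx$, which are isolated on the algebraic curve. Cauchy estimates then give the $O(\kappa_q^n)$ error. One must still check that $1-Kx$ has no other zeros on the critical circle. I would do this by positivity of the coefficients of $Kx$ (aperiodicity in $v$ of the $B^2$ component), which forces $|Kx(v)|<Kx(|v|)$ off the positive axis.
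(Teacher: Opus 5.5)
Your treatment of $P_G$ (renewal identity plus the radius from \cref{thm:exact-dominant-pole}), of strong positive recurrence via the gap criterion, and of the orbit asymptotic follows the paper. The computation of $P_\infty$, however, rests on a misidentified mechanism. The vertical cusp rays carry no Catalan growth. By \eqref{eq:verticalindices} their outward index is $1$ and their inward index is $q+1$, so after the single turn $b_{n,k}\to\bar b_{n,k}$ (weight $q$) the reversal $\bar b_{n,k}\to b_{n,k}$ has weight $i(b_{n,k})-1=0$. Every vertical excursion is therefore ``descend $k$, turn once, climb $k$'', with weight $q(q+1)^{k-1}$ by \eqref{eq:excursionweight}. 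The series $V$ in \eqref{eq:vertical} is rational with a pole at $u^2=(q+1)^{-1}$, not algebraic with a branch point, and its two-step growth is $q+1<4q$. Worse, once a finite core is deleted, a severed ray contains no positive periodic path at all, because a path that has turned upward must climb back into the core. So the lower bound you single out as the main step, ballot-number excursions along one cusp ray beyond height $N$, produces nothing in the complement of the core. The rate $4q$ does come from the discriminant of \eqref{eq:xquadratic} you mention, but it lives in the horizontal homogeneous tail. There the vertical excursions enter only through the valley-turn coefficient $\delta=uV$.

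Even on the correct component, loops avoiding the core are counted not by $x$ but by the based-loop series $Z_{\rm cell}=(1-uVx)^{-1}$ of \eqref{eq:cellzeta}. The decisive check missing from your plan is that this renewal denominator does not vanish before the branch point. By \eqref{eq:tail-boundary-values}, $u_{\mathrm{tail}}V(u_{\mathrm{tail}})x(u_{\mathrm{tail}})=(2q-1)/(2(3q-1))<1$; with nonnegativity of coefficients this excludes zeros in $|u|<u_{\mathrm{tail}}$. The nonzero square-root coefficient then makes $u_{\mathrm{tail}}$ a genuine singularity of $Z_{\rm cell}$. The exact radius gives the upper and lower bounds for the tail pressure at once, with no explicit ballot count (\cref{prop:homogeneous-tail-pressure}). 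This is not a formality: for the root component the analogous denominator $1-Kx$ does vanish at $u=(q+1)^{-1}$, before the branch point, and that is precisely the pressure gap.

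You also replace the paper's $P_\infty$ (loops based at $a_0$ with at most $n/M$ visits to $F$) by the pressure of loops avoiding $F$, and call the two equivalent. That equivalence is what lets you invoke the Ruhr--Sarig criterion, and it is the content of \cref{lem:finite-core-escape}: an excursion decomposition over a finite boundary for the upper bound, and splicing tail loops to $a_0$ for the lower bound.

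Finally, a smaller point in the asymptotic. The pole of $K$ at $v=2/(q^2+2q+2)$ is a zero of $Z_{a_0}$, hence a pole of the logarithmic derivative. For even $q\ge6$ it lies inside $|v|<(4q)^{-1}$, so $\kappa_q$ must also exceed $(q^2+2q+2)/2$. This is why the paper uses $\sigma_q$ in \eqref{eq:Rq-sigmaq}.
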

The same Schur scheme proves algebraicity for eventually stationary tails
with finite boundary data and algebraic cell scattering; see
\cref{thm:stationary-algebraicity}.

Finally let $h(e)=d_{Y_q}(o(e),v_0)$ be the graph distance from the root and
let $M_\rho$ be multiplication by $\rho^{h(e)}$, $0<\rho<1$, on the Hilbert
space $\ell^2(\OrE(Y_q))$ of square-summable edge-state functions.  For a
cycle $C=(e_0,\ldots,e_{m-1})$, put
$H(C)=\sum_{j=0}^{m-1}h(e_j)$.  We regularize by $T_\rho=BM_\rho$.

For every $0<\rho<1$, $T_\rho$ is trace class and
\[
 Z(u,\rho)=\det(I-uT_\rho)^{-1}
\]
is the regulator used below.

The periodic tail is stationary under horizontal translation.  Let
$\mathcal P_{\rm cell}$ denote the primitive tail cycles anchored by requiring
minimal top coordinate one, and write $(\ell_P,w_P,\eta_P)$ for their length,
weight and anchored integrated height.
The corresponding anchored cell Euler product is
\[
 Z_{\rm cell}(u)
 =\prod_{P\in\mathcal P_{\rm cell}}(1-w_Pu^{\ell_P})^{-1},
\]
initially as an analytic germ at $u=0$.  When the dependence on the height
function must be displayed, we write $Z_{\rm ren}^{h}$ for the renormalized
finite part constructed from $h$.

\begin{theorem}[Complete renormalized limit and height covariance]
\label{thm:intro-renormalized}
For $|u|<(q+1)^{-1}$ the normally convergent
series
\[
 \mathscr P(u)=\sum_{P\in\mathcal P_{\rm cell}}
 \frac1{\ell_P}\Li(w_Pu^{\ell_P})
\]
defines the complete integrated-pressure counterterm, and
\[
 Z_{\rm ren}(u)=\lim_{\rho\uparrow1}
 \exp\!\left(-\frac{\mathscr P(u)}{1-\rho}\right)Z(u,\rho)
\]
exists locally uniformly.  It is given by the convergent product
\[
 Z_{\rm root}(u)
 \prod_{P\in\mathcal P_{\rm cell}}
 (1-w_Pu^{\ell_P})^{\eta_P/\ell_P-1/2}
 \exp\!\left[-\frac{\Li(w_Pu^{\ell_P})}{2\ell_P}\right].
\]
All logarithms and fractional powers in this formula are the analytic branches
equal to zero and one, respectively, at $u=0$.
Here $Z_{\rm root}(u)$ is the algebraic Euler product over primitive cycles
meeting the root vertex $v_0$; it generally differs from the edge-local zeta
$Z_{a_0}$.  Its explicit factorization is given in
\eqref{eq:rootfactor}.  The same conclusion holds for every stationary
translated-orbit system satisfying the exponential orbit bounds of
\cref{def:translated-orbit-system}.  If the height is replaced by $h+c$,
$c\in\mathbb Z_{\ge0}$, then
\[
 Z_{\rm ren}^{h+c}(u)=Z_{\rm cell}(u)^{-c}Z_{\rm ren}^{h}(u),
\]
whereas a finite-support change of height leaves $Z_{\rm ren}$ unchanged.
\end{theorem}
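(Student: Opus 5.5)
Since $T_\rho$ is trace class, I will start from the cycle expansion of the regulator. Because $T_\rho=BM_\rho$, the trace of $T_\rho^m$ is a sum over closed non-backtracking cycles weighted by $w\,\rho^{H(C)}$. Hence
\[
 \log Z(u,\rho)=\sum_{C\ \text{primitive}}\sum_{k\ge1}\frac1k\,w_C^k u^{k\ell_C}\rho^{kH(C)},
\]
and this converges absolutely for $|u|<(q+1)^{-1}$ and $0<\rho<1$.

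I will split the primitive cycles into two classes. The first class consists of cycles meeting the root vertex $v_0$. These are finitely many for each length, and their contribution converges as $\rho\uparrow1$ to $\log Z_{\rm root}(u)$. The required uniformity comes from the exponential orbit bounds behind \cref{thm:intro-pressure-gap}: the counts are controlled by $(q+1)^{n}$ up to the pressure gap. The factorization \eqref{eq:rootfactor} then identifies the limit.

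The second class consists of tail cycles avoiding $v_0$. Stationarity under horizontal translation parametrizes these as $P+j$ with $P\in\mathcal P_{\rm cell}$ and $j\ge0$. A translate has length $\ell_P$, weight $w_P$ and integrated height $H(P+j)=\eta_P+j\ell_P$; I will check this convention against the anchoring "minimal top coordinate one" and the definition of $\eta_P$. Summing the geometric series in $j$, the tail contribution becomes
\[
 \sum_{P}\sum_{k\ge1}\frac{(w_Pu^{\ell_P})^k}{k}\,\frac{\rho^{k\eta_P}}{1-\rho^{k\ell_P}}.
\]

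The heart of the argument is the Laurent-type expansion of $\rho^{k\eta}/(1-\rho^{k\ell})$ as $\rho\uparrow1$. Writing $\epsilon=1-\rho$, I expect
\[
 \frac{\rho^{k\eta}}{1-\rho^{k\ell}}=\frac{1}{k\ell\,\epsilon}+\Bigl(\frac{1}{2}-\frac{\eta}{\ell}\Bigr)+O(k\epsilon)
\]
uniformly for $k\epsilon$ small, with a bounded error otherwise. After inserting this and summing over $k$ with weight $z^k/k$, where $z=w_Pu^{\ell_P}$, the terms are as follows:
\begin{itemize}
\item the pole term gives $\Li(z)/(\ell_P\epsilon)$, which is exactly $\mathscr P(u)/(1-\rho)$;
\item the constant term gives $-(1/2-\eta_P/\ell_P)\log(1-z)$, which is the exponent $\eta_P/\ell_P-1/2$ in the product.
\end{itemize}
The exact expansion of $\rho^{k\eta}/(1-\rho^{k\ell})$ in powers of $\epsilon$ rather than $\log(1/\rho)$ carries a residual correction at order $\epsilon^0$ from the mismatch $\log(1/\rho)=\epsilon+\epsilon^2/2+\cdots$. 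This produces $\tfrac1{\ell}\cdot\tfrac{k\ell\epsilon^2/2}{k\ell\epsilon\cdot\epsilon}$-type contributions, i.e.\ $-\tfrac{1}{2\ell_P}\sum_k z^k/k^2=-\Li(z)/(2\ell_P)$. I will verify this carefully, since it accounts for the extra $\exp[-\Li/(2\ell_P)]$ factor.

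The main obstacle is controlling the remainder uniformly in $k$ and in $P$ while passing to the limit. My plan is to use the exact bound
\[
 \Bigl|\frac{\rho^{k\eta}}{1-\rho^{k\ell}}-\frac{1}{k\ell\epsilon}-c\Bigr|\le C(1+\eta/\ell)^2\min(k\epsilon,1),
\]
where $c$ is the constant term above. I then dominate $\sum_P\sum_k |z_P|^k(1+\eta_P)^2$ using the exponential orbit bounds. These bounds say that the number and height of cell cycles of length $\ell$ grow at most like $\kappa^\ell$ with $\kappa<(q+1)$ after weighting, on compacta of $|u|<(q+1)^{-1}$. Dominated convergence then gives locally uniform convergence. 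The same bounds give normal convergence of $\mathscr P$ and of the final product, since $\Li(z)=O(|z|)$ and $\log(1-z)=O(|z|)$. This argument uses only stationarity and the orbit bounds, so it transfers verbatim to the translated-orbit systems of \cref{def:translated-orbit-system}.

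For covariance, replacing $h$ by $h+c$ multiplies each $\rho^{H(C)}$ by $\rho^{c\ell_C}$. This changes $\eta_P$ to $\eta_P+c\ell_P$ and so shifts each exponent by $c$, producing the factor $\prod_P(1-w_Pu^{\ell_P})^{c}=Z_{\rm cell}^{-c}$. The root cycles and the counterterm are unchanged in the limit, because the $O(c\ell\epsilon)$ change disappears. A finite-support change alters $H$ only for cycles passing through a finite region. The affected tail cycles are finitely many translates per $P$, so the effect tends to $1$ as $\rho\to1$ and $Z_{\rm ren}$ is unchanged.
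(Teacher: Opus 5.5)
Your route is essentially the paper's. You split primitive cycles into root-meeting cycles and translates $P_n$ with heights $\eta_P+(n-1)\ell_P$, and sum each family to $\rho^{k\eta_P}/(k(1-\rho^{k\ell_P}))$. You then use an Abel expansion in which the discrepancy between $t=-\log\rho$ and $\epsilon=1-\rho$ produces the $-\Li(z_P)/(2\ell_P)$ term, and close with dominated convergence over all families via exponential orbit bounds; this is \cref{lem:uniform-abel} together with \cref{thm:abstract-renormalized-limit}. For the shift $h\mapsto h+c$ you argue a little differently. You reapply the product formula to the data $(\ell_P,w_P,\eta_P+c\ell_P)$, which still satisfy \cref{def:translated-orbit-system} with $C_0$ replaced by $C_0+c$. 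The paper instead writes $Z^{h_c}(u,\rho)=Z^h(\rho^cu,\rho)$ and uses $u\mathscr P'(u)=\log Z_{\rm cell}(u)$. Your version is valid and avoids having to move $\rho^cu$ back to $u$ via local uniformity.

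The remainder estimate you single out as the main step is false as written, and needs correcting.
\begin{itemize}
\item \textbf{The bound must use $k\ell\epsilon$, not $k\epsilon$.} Take the correct constant $c=\frac12-\frac1{2k\ell}-\frac{\eta}{\ell}$; with $c=\frac12-\frac\eta\ell$ the remainder does not even tend to zero. Put $a=\eta/\ell$ and $x=k\ell t$. The remainder is then $\frac{e^{-ax}}{1-e^{-x}}-\frac1x-(\frac12-a)$ plus a term of order $\epsilon/(k\ell)$. It is small only when $x$, i.e.\ $k\ell\epsilon$, is small. For $k=1$, $a=1$ and $\ell t=1$ it is about $\frac1{e-1}-\frac12\approx0.08$, while your bound is $O(1/\ell)$. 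Long cell cycles with bounded average height exist, e.g.\ $C_1^jD$ with $D=(a_1,\bar a_1,b_{1,0},b_{1,1},\bar b_{1,1},\bar b_{1,0})$, so this case genuinely occurs. The correct inequality is $|R|\le C(1+\eta/\ell)^2\min(k\ell\epsilon,1)$. With $\eta\le C_0\ell^2$ this is exactly \eqref{eq:abel-small-bound}--\eqref{eq:abel-large-bound}. The error in your displayed expansion should likewise be $O((1+\eta/\ell)^2k\ell\epsilon)$.
\item \textbf{Why the argument still works.} Dominated convergence only needs the $\rho$-uniform majorant $C(1+\eta_P/\ell_P)^2|z_P|^k/k$ and termwise vanishing, and the corrected bound gives both. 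What you lose is any rate uniform in $\ell$.
\item \textbf{Where the dilogarithm comes from.} Your heuristic quotient simplifies to $1/(2\ell)$, which would produce a logarithm. The dilogarithm comes from $\frac1{k\ell}\bigl(\frac1t-\frac1\epsilon\bigr)=-\frac1{2k\ell}+O(\epsilon/(k\ell))$, and the extra $1/k$ is what turns $\sum z^k/k$ into $\sum z^k/k^2$.
\item \textbf{Root cycles and finite-support changes.} Here the relevant input is that primitive cycles meeting a fixed finite state set have total length-$m$ weight at most $C(q+1)^m$, because every row sum of $B$ is $q+1$. The pressure gap plays no role. ``Finitely many translates per $P$'' is not enough, since infinitely many families $P$ have a translate meeting the support, so you need this summable bound over all of them.
\item \textbf{No rate $\kappa<q+1$ is needed.} You do not need cell weights to grow at a rate $\kappa<q+1$. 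The bound $C(q+1)^m$ already gives geometric decay on compacta of $|u|<(q+1)^{-1}$, and that is also what transfers to \cref{def:translated-orbit-system}.
\end{itemize}
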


Here \emph{complete} means that the leading Abel divergence is removed for
every translated primitive tail family; height covariance records the
remaining dependence on the normalization of height.

\subsection{Relation to previous work: theorem-by-theorem positioning}

We distinguish the inherited mechanisms from the model-specific conclusions.

\paragraph{Global obstruction and the local theorem
(\cref{thm:obstruction,thm:intro-algebraic}).}
Global and periodic infinite-graph determinant frameworks include
\cite{BowenLanford1970,Bass1992,ClairMokhtariSharghi2001,
GuidoIsolaLapidus2008Periodic,GuidoIsolaLapidus2008Approximation,Deitmar2015},
while Chinta--Jorgenson--Karlsson and Kousaka localize closed-path counts at a
vertex \cite{ChintaJorgensonKarlsson2015,Kousaka2023}.  Thus
\cref{thm:intro-algebraic} does not claim that localization itself is new.  Its
observation set consists of directed edge states, its determinant is induced
by first returns, and its weights are graph-of-groups non-backtracking
multiplicities.  The new conclusions are the coefficientwise length-four
obstruction and the exact stationary Schur computation, including the
quadratic formula, convergence radius and dominant poles.  The finite-core,
finitely cusped theories of Deitmar--Kang and Hong--Kwon do not encounter this
infinite fixed-length multiplicity \cite{DeitmarKang2018,HongKwon2025}.
Deitmar's weighted prime-geodesic asymptotics concern a different, globally
defined setting \cite{Deitmar2023}; they do not supply a coefficientwise
global zeta for the present infinitely repeated length-four cycles.

\paragraph{Pressure theorem (\cref{thm:intro-pressure-gap}).}
The thermodynamic formalism and the recurrence criterion
$P_\infty<P_G$ are taken from
\cite{GurevichSavchenko1998,Sarig1999,Sarig2001,SarigPhase2001,
RuhrSarig2022,Velozo2026}.  The new part of
\cref{thm:intro-pressure-gap} is the exact
evaluation of both pressures for the graph-of-groups multiplicity potential;
the poles of the algebraic local zeta then give the orbit asymptotic with an
explicit spectral scale.  The tree codings and mixing theory provide the
geometric background, not these calculations
\cite{BroisePaulinCoding2007,BroisePaulinModular2007,
BroiseParkkonenPaulin2016,BroiseParkkonenPaulin2019,
BroiseParkkonenPaulin2021}.

\paragraph{Renormalized theorem (\cref{thm:intro-renormalized}).}
Guido--Isola--Lapidus obtain a periodic-graph zeta as a normalized amenable
finite-subgraph limit \cite{GuidoIsolaLapidus2008Approximation}.  Here the
one-sided quotient has a root boundary and its unnormalized Euler product is
not a formal series.  We instead use a height-Abel regulator and subtract the
integrated-pressure divergence of every translated primitive family.  The new
points are uniformity over primitive lengths and weights, the explicit
dilogarithmic counterterm, local uniformity of the finite part and height
covariance.  Fixed shifted-factorial asymptotics \cite{McIntosh1999} treat one
family but do not justify their sum.  This limit is therefore neither analytic
continuation of a global zeta nor the periodic normalized determinant of
\cite{GuidoIsolaLapidus2008Periodic,GuidoIsolaLapidus2008Approximation}.

\paragraph{Stationary extension and finite-volume comparison
(\cref{thm:stationary-algebraicity,thm:finite-renormalized}).}
Schur complements and transfer matrices are standard; the contribution of
\cref{thm:stationary-algebraicity} is a finite-boundary criterion for
eventually stationary cuspidal tails.  Similarly,
\cref{thm:finite-renormalized} is a one-sided, boundary-corrected comparison
theorem, not a generalization of \cite{GuidoIsolaLapidus2008Approximation}.
It identifies the relative finite factor and compares sharp-cutoff and
height-Abel finite parts.

\section{The one-sided comb model and the global obstruction}
\label{sec:prelim}

\subsection{Edge-indexed graphs and weighted transitions}

We first specify the quotient data without assuming that the reader is
familiar with graphs of groups.  For a graph $Y$, write $VY$ for its vertex
set and $\OrE(Y)$ for the set of oriented edges.  Each geometric edge gives
two oriented edges, $e$ and its reverse $\bar e$.  Their origins and termini
are denoted by $o(e)$ and $t(e)$, so that
$o(\bar e)=t(e)$ and $t(\bar e)=o(e)$.

\begin{definition}[Edge-indexed graph]\label{def:edge-indexed-graph}
An \emph{edge-indexed graph} is a pair $(Y,i)$ consisting of a graph $Y$ and
a function
\[
 i:\OrE(Y)\longrightarrow \mathbb Z_{\ge1}.
\]
Thus an index is attached to an \emph{oriented} edge.  In particular,
$i(e)$ and $i(\bar e)$ need not be equal.  We call $(Y,i)$ \emph{locally
finite} when
\begin{equation}\label{eq:local-finite-indexed}
 \sum_{o(e)=v}i(e)<\infty
 \qquad(v\in VY).
\end{equation}
\end{definition}

The source of these indices in the present paper is Bass--Serre theory.  A
graph of groups over $Y$ assigns a group $G_v$ to each vertex, a group
$G_e=G_{\bar e}$ to each underlying geometric edge, and an attaching
monomorphism $\phi_e:G_e\hookrightarrow G_{o(e)}$ at each end of that edge.
We refer to \cite{Serre2003,BassLubotzky2001} for the Bass--Serre background.
These data canonically determine the edge index
\[
 i(e)=[G_{o(e)}:\phi_e(G_e)].
\]
We assume throughout that these indices are finite and satisfy
\eqref{eq:local-finite-indexed}.  For a quotient of a locally finite
Bass--Serre tree, the sum in \eqref{eq:local-finite-indexed} is exactly the
degree of any lift of $v$.  After this point, the weighted path calculations
use only the pair $(Y,i)$; the graph-of-groups realization explains the
multiplicities geometrically.

Define the transition multiplicity
\begin{equation}\label{eq:transition}
 b(e,f)=
 \begin{cases}
 i(f)-1,&t(e)=o(f)\text{ and }f=\bar e,\\
 i(f),&t(e)=o(f)\text{ and }f\ne\bar e,\\
 0,&t(e)\ne o(f).
 \end{cases}
\end{equation}
The subtraction in the first line removes the unique lift that immediately
reverses the preceding lifted edge.  Quotient backtracking is therefore
allowed precisely when the corresponding index supplies another lift.

Write $B=(b(e,f))_{e,f\in\OrE(Y)}$.  The matrix $B$ records both which
quotient transitions are possible and how many non-backtracking lifts each
transition has.

\begin{definition}[Quotient edge shift]\label{def:quotient-edge-shift}
The \emph{two-sided quotient edge shift} associated with $B$ is
\[
 \Sigma_B=
 \bigl\{x=(e_j)_{j\in\mathbb Z}\in\OrE(Y)^{\mathbb Z}:
 b(e_j,e_{j+1})>0\text{ for every }j\in\mathbb Z\bigr\},
\]
with the left shift
\[
 \sigma(x)_j=e_{j+1}.
\]
Its \emph{one-sided presentation} is
\[
 \Sigma_B^+=
 \bigl\{(e_j)_{j\in\mathbb N_0}:b(e_j,e_{j+1})>0
 \text{ for every }j\ge0\bigr\},
\]
with the same left-shift rule.  The two-sided system is the natural extension
of the one-sided system.  We use the former for invertible geodesic dynamics
and the latter for transfer operators, first returns and strong positive
recurrence.

The shift space $\Sigma_B$ remembers only the support condition $b(e,f)>0$;
the positive integer $b(e,f)$ is the weight of that transition.  Equivalently,
one may replace a transition of weight $m$ by $m$ parallel symbolic branches.
This expanded presentation retains the multiplicities while having the same
weighted path sums as powers of $B$.
\end{definition}

\begin{definition}\label{def:pathweight}
For an admissible closed edge sequence
$C=(e_0,e_1,\ldots,e_{m-1})$, meaning that
$t(e_j)=o(e_{j+1})$ with indices read modulo $m$, set
\[
 \ell(C)=m,
 \qquad
 W(C)=\prod_{j=0}^{m-1}b(e_j,e_{j+1}).
\]
We call $C$ \emph{positive} if $W(C)>0$.  Cyclic rotations are identified and
their equivalence class is denoted by $[C]$.  The cycle $C$ is
\emph{primitive} if it cannot be written as $D^k$ for a closed cycle $D$ and
an integer $k\ge2$.
\end{definition}

The matrix $B$ is the weighted non-backtracking operator.  For this
nonnegative matrix we use the possibly infinite diagonal
sum
\[
 \Tr(B^m):=\sum_{e\in\OrE(Y)}(B^m)_{e,e}\in[0,\infty].
\]
On a finite graph one has \cite{Bass1992}
\begin{equation}\label{eq:finitezeta}
 Z_Y(u)=\prod_{[C]\ \mathrm{primitive}}
 (1-W(C)u^{\ell(C)})^{-1}
 =\det(I-uB)^{-1}.
\end{equation}
The same formula can hold for selected infinite weighted graphs, but it requires
a finiteness or trace condition \cite{Deitmar2015,DeitmarKang2018}.

\Needspace{0.34\textheight}
\subsection{The local--regularized--renormalized hierarchy}
\Cref{tab:zeta-hierarchy} summarizes the four levels used throughout the
paper and the distinct finiteness mechanism at each level.

\begin{table}[H]
\centering
\caption{The global, local, regularized and renormalized levels.}
\label{tab:zeta-hierarchy}
\begin{tabular}{>{\raggedright\arraybackslash}p{0.20\textwidth}
                >{\raggedright\arraybackslash}p{0.33\textwidth}
                >{\raggedright\arraybackslash}p{0.35\textwidth}}
\toprule
Object & Periodic data retained & Finiteness mechanism\\
\midrule
Global zeta & all primitive weighted cycles & none; it fails for $Y_q$\\
Local zeta $Z_E$ & primitive cycles meeting a fixed finite $E$ & finite
propagation from $E$\\
Regularized family $Z(u,\rho)$ & all primitive cycles with a regulator &
height damping $\rho^{H(C)}$, $0<\rho<1$\\
Renormalized finite part $Z_{\rm ren}$ & all primitive orbit families after
bulk subtraction & remove $\mathscr P(u)/(1-\rho)$ and let $\rho\uparrow1$\\
\bottomrule
\end{tabular}
\end{table}

\subsection{The infinitely cusped one-sided comb model}
\label{sec:comb}

Fix an even integer $q\ge2$ and put
\[
 r=\frac q2+1.
\]
Let $Y_q$ have top vertices $v_0,v_1,\ldots$, horizontal edges
$a_n:v_n\to v_{n+1}$, and, for each $n\ge0$, a vertical ray with vertices
$w_{n,k}$, $k\ge1$, and downward edges
\[
 b_{n,0}:v_n\to w_{n,1},
 \qquad
 b_{n,k}:w_{n,k}\to w_{n,k+1}\quad(k\ge1).
\]

\begin{figure}[H]
\centering
\begin{tikzpicture}[x=1cm,y=1cm,
  >={Stealth[length=2.0mm,width=1.35mm]},
  vertex/.style={circle,fill=black,inner sep=1.65pt},
  root vertex/.style={circle,fill=orange!80!black,draw=orange!80!black,
                     inner sep=2.25pt},
  stationary edge/.style={->,draw=blue!55!black,line width=0.82pt},
  exceptional edge/.style={->,draw=orange!80!black,line width=1.18pt},
  continuation/.style={->,densely dashed,draw=black!48,line width=0.72pt},
  edge name/.style={font=\scriptsize,fill=white,inner sep=1.1pt,text=black},
  vertex name/.style={font=\small,text=black},
  legend/.style={draw=black!32,fill=black!2,rounded corners=2pt,
                 inner sep=7pt,text width=6.45cm,align=left,font=\small}]

  \node[root vertex] (v0) at (0,0) {};
  \foreach \n in {1,...,4}{
    \node[vertex] (v\n) at ({1.45*\n},0) {};
  }
  \foreach \n in {0,...,4}{
    \node[vertex] (w\n1) at ({1.45*\n},-1.05) {};
    \node[vertex] (w\n2) at ({1.45*\n},-2.10) {};
  }

  \draw[exceptional edge] (v0)--node[edge name,above=2pt] {$a_0$} (v1);
  \draw[stationary edge] (v1)--node[edge name,above=2pt] {$a_1$} (v2);
  \draw[stationary edge] (v2)--node[edge name,above=2pt] {$a_2$} (v3);
  \draw[stationary edge] (v3)--node[edge name,above=2pt] {$a_3$} (v4);
  \draw[continuation] (v4)--+(0.82,0);

  \draw[exceptional edge] (v0)--node[edge name,left=2pt] {$b_{0,0}$} (w01);
  \draw[stationary edge] (w01)--node[edge name,left=2pt] {$b_{0,1}$} (w02);
  \foreach \n in {1,...,4}{
    \draw[stationary edge] (v\n)--(w\n1);
    \draw[stationary edge] (w\n1)--(w\n2);
  }
  \foreach \n in {0,...,4}{
    \draw[continuation] (w\n2)--+(0,-0.58);
  }

  \node[vertex name,above=5pt of v0] {root $v_0$};
  \foreach \n in {1,...,4}{
    \node[vertex name,above=5pt of v\n] {$v_{\n}$};
  }
  \node[font=\scriptsize,text=black!70,left=3pt of w01] {$w_{0,1}$};
  \node[font=\scriptsize,text=black!70,left=3pt of w02] {$w_{0,2}$};
  \node[font=\scriptsize,text=black!65] at (2.90,-2.90)
       {one infinite ray below each $v_n$};

  \node[legend,anchor=north west] at (7.02,0.58) {%
    {\bfseries How to read the indices}\par\smallskip
    Every solid arrow $e$ points away from $v_0$.  Its ordered pair is
    $(i(e),i(\bar e))$, with the reverse index second.\par\medskip
    \(\begin{array}{rcl}
      a_0 &\longmapsto& (r,q),\\
      a_n\ (n\ge1) &\longmapsto& (1,q),\\
      b_{0,0} &\longmapsto& (r,q+1),\\
      b_{n,k}\ ((n,k)\ne(0,0)) &\longmapsto& (1,q+1).
    \end{array}\)\par\medskip
    Here $r=q/2+1$.  The two thicker orange arrows are the exceptional root
    edges; the thinner blue arrows obey the uniform non-root rules.
  };
\end{tikzpicture}
\caption{The one-sided comb quotient $Y_q$ as a rooted edge-indexed graph.
Every solid arrow is oriented away from $v_0$, dashed arrows indicate
continuation, and the box records the ordered pair $(i(e),i(\bar e))$.
Colour and line weight separate the two exceptional root edges from the
non-root tail, but the labels make the distinction independent of colour.}
\label{fig:comb}
\end{figure}

We call $n$ the \emph{top coordinate} of $v_n$.  For $n\ge1$, the $n$th
\emph{stationary cell} means the local transition data at $v_n$, together with
its attached vertical ray and its left and right horizontal interfaces.  The
data at $v_0$ form the exceptional root cell.
Thus the underlying unindexed graph is the rooted, one-sided comb: a
horizontal ray with one vertical ray attached at every vertex.  The term is
descriptive; unlike the classical bilateral comb lattice on $\mathbb Z^2$,
the present model is one-sided and carries essential edge indices
\cite{KrishnapurPeres2004}.
The indices are
\begin{align}
 i(a_0)&=r,&i(\bar a_0)&=q,
 &i(b_{0,0})&=r,&i(\bar b_{0,0})&=q+1,\label{eq:rootindices}\\
 i(a_n)&=1,&i(\bar a_n)&=q &&(n\ge1),\label{eq:horizontalindices}\\
 i(b_{n,k})&=1,&i(\bar b_{n,k})&=q+1
 &&((n,k)\ne(0,0)).\label{eq:verticalindices}
\end{align}
This is the edge-indexed form of the quadratic-growth example in
\cite[pp.~305--306]{BroiseParkkonenPaulin2021}.

\begin{proposition}[Graph-of-groups realization and finite covolume]
\label{prop:lattice-realization}
The indexed graph $(Y_q,i)$ is realized by a graph of finite groups whose
fundamental group $\Gamma_q$ acts discretely on the $(q+2)$-regular
Bass--Serre tree $X_q$, with quotient $\Gamma_q\backslash X_q=Y_q$.  With the
standard vertex-stabilizer normalization, its covolume is finite; more
precisely,
\begin{equation}\label{eq:covolume}
 \sum_{x\in VY_q}\frac1{|G_x|}
 =\frac1r+\frac2{q-1}<\infty.
\end{equation}
Hence $\Gamma_q$ is a nonuniform tree lattice.
\end{proposition}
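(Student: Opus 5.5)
The plan is to build an explicit graph of finite groups whose edge indices are \eqref{eq:rootindices}--\eqref{eq:verticalindices}, and then read off the three conclusions: $(q+2)$-regularity of the tree, discreteness, and finite covolume. Regularity comes straight from the indices. By the remark after \cref{def:edge-indexed-graph}, the degree of a lift of $x$ is $\sum_{o(e)=x}i(e)$. The three kinds of vertex give:
\[
 \begin{array}{ll}
 \text{at } v_0: & i(a_0)+i(b_{0,0})=2r=q+2,\\
 \text{at } v_n\ (n\ge1): & i(\bar a_{n-1})+i(a_n)+i(b_{n,0})=q+1+1,\\
 \text{at } w_{n,k}: & i(\bar b_{n,k-1})+i(b_{n,k})=(q+1)+1.
 \end{array}
\]
So every vertex of the Bass--Serre tree has degree $q+2$. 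The role of the parity of $q$ is only that $r=q/2+1$ must be an integer.

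Next I fix the group orders that the indices force. An index $i(e)=1$ means $\phi_e(G_e)=G_{o(e)}$, so along each horizontal edge $a_n$ ($n\ge1$) and each vertical edge $b_{n,k}$ with $(n,k)\ne(0,0)$ the vertex group embeds into the next one, with index $q$ horizontally and $q+1$ vertically. I normalize $G_{a_0}$ and $G_{b_{0,0}}$ to be trivial. Then the orders are
\[
 |G_{v_0}|=r,\qquad |G_{v_n}|=q^n\ (n\ge1),\qquad |G_{w_{n,k}}|=q^n(q+1)^k .
\]
Concretely, I take
\[
 G_{v_0}=\mathbb Z/r,\qquad G_{v_n}=(\mathbb Z/q)^n,\qquad G_{w_{n,k}}=(\mathbb Z/q)^n\times(\mathbb Z/(q+1))^k,
\]
with edge groups equal to the smaller endpoint group and all inclusions given by coordinate embeddings. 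For $n=0$ the factor $(\mathbb Z/q)^0$ is trivial. One then checks each index against \eqref{eq:rootindices}--\eqref{eq:verticalindices}. For example, $[G_{v_1}:\phi_{\bar a_0}(1)]=q$ and $[G_{w_{0,1}}:1]=q+1$.

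By Bass--Serre theory \cite{Serre2003,BassLubotzky2001}, the fundamental group $\Gamma_q$ acts on the universal covering tree $X_q$ with quotient $Y_q$ and vertex stabilizers conjugate to the finite groups $G_x$. Since $X_q$ is locally finite and all stabilizers are finite, the action is discrete. The covolume, with the standard normalization $\sum_x 1/|G_x|$, is
\[
 \frac1r+\sum_{k\ge1}(q+1)^{-k}+\sum_{n\ge1}q^{-n}\sum_{k\ge0}(q+1)^{-k}
 =\frac1r+\frac1q+\frac{1}{q-1}\cdot\frac{q+1}{q}
 =\frac1r+\frac{2}{q-1}.
\]
This gives \eqref{eq:covolume}. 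By the Bass--Lubotzky criterion, finite covolume makes $\Gamma_q$ a lattice. The lattice is nonuniform because the quotient $Y_q$ is infinite, so the action is not cocompact.

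The only real obstacle is arranging compatible nested finite groups, one chain for each cusp and one along the horizontal ray. The direct-product construction settles this, because the ray inclusions and the ray-to-ray inclusions commute coordinatewise. The remaining point is to confirm that the trivial-edge-group normalization at the root agrees with the paper's ``standard vertex-stabilizer normalization''. If the paper instead rescales by a fixed constant, \eqref{eq:covolume} changes only by that constant factor.
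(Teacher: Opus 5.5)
Your proposal is correct and follows the paper's proof: both use trivial edge groups at the two root edges, the vertex orders $r$, $q^n$, $q^n(q+1)^k$, the same index check and the same covolume sum, which you merely regroup; your $(\mathbb Z/q)^n$ in place of the paper's cyclic $\mathbb Z/q^n$ makes no difference. Two small remarks: a graph of groups over a tree imposes no compatibility between the inclusions at different edges, so the ``obstacle'' you single out is not one; and your closing worry about normalization is settled, because the paper also chooses trivial root edge groups.
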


\begin{proof}
A concrete realization of the same edge-indexed graph, consistent with
\cite[pp.~305--306]{BroiseParkkonenPaulin2021}, is as follows.  Set
\[
 G_{v_0}=\mathbb Z/r\mathbb Z,\qquad
 G_{v_n}=\mathbb Z/q^n\mathbb Z\quad(n\ge1),
\]
and
\[
 G_{w_{n,k}}
 =\mathbb Z/q^n\mathbb Z\times
  \mathbb Z/(q+1)^k\mathbb Z
 \qquad(n\ge0, k\ge1),
\]
where a factor with exponent zero is trivial.  Give the two edges issuing
from $v_0$ trivial edge groups.  For every other edge directed away from
$v_0$, take the edge group to be its origin group and embed it identically at
the origin and as the evident subgroup at the terminus.  The resulting
indices are exactly \eqref{eq:rootindices}--\eqref{eq:verticalindices}.

All vertex stabilizers are finite, so the Bass--Serre action is discrete.
Moreover,
\begin{align*}
 \sum_{x\in VY_q}\frac1{|G_x|}
 &=\frac1r+\sum_{n\ge1}\frac1{q^n}
   +\sum_{n\ge0}\sum_{k\ge1}
     \frac1{q^n(q+1)^k}\\
 &=\frac1r+\frac1{q-1}+\frac1{q-1},
\end{align*}
which proves \eqref{eq:covolume}.  Finally, the universal-cover degree is
$q+2$ by \eqref{eq:degree} below.
\end{proof}

\begin{remark}[Meaning of cusp]
\label{rem:cusp-convention}
In this paper a \emph{vertical cusp ray} means the quotient ray
$v_n,w_{n,1},w_{n,2},\ldots$.  Beyond its initial vertex it has outward index
$1$, inward index $q+1$, and exponentially increasing stabilizer orders
$q^n(q+1)^k$; its stabilizer mass is therefore summable.  This is the standard
ray-of-finite-volume feature relevant to the present graph-of-groups model.
Since there is one such ray for every $n\ge0$, the lattice has infinitely many
vertical cusps.  No assertion of geometric finiteness with finitely many cusp
orbits is intended.
\end{remark}

\subsection{Coefficientwise failure of the global zeta}
\label{sec:globalfail}

We first isolate the general mechanism.

\begin{theorem}[Infinite length-four obstruction]\label{thm:obstruction}
Let $(Y,i)$ be a locally finite edge-indexed graph.  Suppose that there are
infinitely many triples $(v_n,e_n,f_n)$ such that
\[
 o(e_n)=o(f_n)=v_n,\quad e_n\ne f_n,
\]
the unoriented edge pairs $\{e_n,\bar e_n\}$ and $\{f_n,\bar f_n\}$ determine
distinct cycles as $n$ varies, and
\[
 i(e_n)=i(f_n)=1,
 \qquad i(\bar e_n)\ge2,
 \qquad i(\bar f_n)\ge2.
\]
Then the cycles
\[
 C_n=(e_n,\bar e_n,f_n,\bar f_n)
\]
are positive and primitive, and
\[
 W(C_n)=\bigl(i(\bar e_n)-1\bigr)
         \bigl(i(\bar f_n)-1\bigr)>0.
\]
Consequently the coefficient of $u^4$ in the logarithm of the global Euler
product is infinite.  Equivalently, $\Tr(B^4)=\infty$.
\end{theorem}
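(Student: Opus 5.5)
The proof is a direct computation of the four transition weights around $C_n$, followed by a counting argument on the $u^4$ coefficient.

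\emph{Admissibility and weight.} Admissibility follows from the endpoint relations: $t(e_n)=o(\bar e_n)$, then $t(\bar e_n)=o(e_n)=v_n=o(f_n)$, then $t(f_n)=o(\bar f_n)$, and finally $t(\bar f_n)=v_n=o(e_n)$, which closes the cycle. For the weights I apply \eqref{eq:transition} to each consecutive pair, reading indices modulo $4$.
\begin{itemize}
\item The step $e_n\to\bar e_n$ is a backtrack, so $b(e_n,\bar e_n)=i(\bar e_n)-1\ge1$.
\item For $\bar e_n\to f_n$ we have $f_n\ne e_n=\overline{\bar e_n}$, so this is not a backtrack and $b(\bar e_n,f_n)=i(f_n)=1$.
\item Symmetrically, $b(f_n,\bar f_n)=i(\bar f_n)-1\ge1$.
\item Since $e_n\ne f_n$, the closing step gives $b(\bar f_n,e_n)=i(e_n)=1$.
\end{itemize}
Multiplying the four factors gives $W(C_n)=(i(\bar e_n)-1)(i(\bar f_n)-1)>0$, so $C_n$ is positive.

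\emph{Primitivity.} A length-four cycle could only be a proper power $D^2$ with $\ell(D)=2$, or $D^4$ with $\ell(D)=1$. The case $D^4$ would force all four entries to be equal. The case $D^2$ would force $e_n=f_n$ as the first and third entries, which contradicts the hypothesis $e_n\ne f_n$. Hence $C_n$ is primitive.

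\emph{Divergence of the $u^4$ coefficient.} Expanding $\log\prod_{[C]}(1-W(C)u^{\ell(C)})^{-1}=\sum_{[C]}\sum_{k\ge1}W(C)^ku^{k\ell(C)}/k$ shows that the $u^4$ coefficient is a sum of nonnegative terms. It contains $W(C_n)$ for each primitive class $[C_n]$, and each such term is at least $1$ because the weights are positive integers. By hypothesis the classes $[C_n]$ are pairwise distinct as $n$ varies; the cyclic rotations of $C_n$ determine the unordered pair $\{\{e_n,\bar e_n\},\{f_n,\bar f_n\}\}$. The coefficient therefore dominates an infinite sum of terms each at least $1$, so it equals $+\infty$.

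\emph{The trace.} For the equivalent statement I use $(B^4)_{e,e}=\sum W(\text{closed paths of length $4$ based at }e)$ by expanding the matrix power, all terms being nonnegative. The four rotations of $C_n$ based at $e_n$ contribute at least $W(C_n)\ge1$ to $(B^4)_{e_n,e_n}$. The edges $e_n$ take infinitely many values: each $e_n$ lies in only finitely many cycles of this form, because $o(e_n)$ has finitely many outgoing edges by local finiteness \eqref{eq:local-finite-indexed}. Hence $\Tr(B^4)=\infty$. Equivalently, grouping closed paths by primitive class gives $\Tr(B^4)=\sum_{d\mid4}\sum_{\ell([D])=d}d\,W(D)^{4/d}$, which matches $4$ times the $u^4$ coefficient in $[0,\infty]$.

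The only delicate point is making precise that distinct $n$ yield distinct cyclic classes and infinitely many base edges. I would settle this through the hypothesis on distinct edge pairs together with local finiteness; the rest is bookkeeping.
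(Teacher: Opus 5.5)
Your proof is correct and follows the same route as the paper. You compute the four transition weights of $C_n$ from the definition of $b$, get primitivity from $e_n\ne f_n$, and obtain divergence from infinitely many nonnegative contributions, each at least one. Your extra bookkeeping spells out what the paper leaves implicit: admissibility, the identity $\Tr(B^4)=4\,[u^4]\log$ of the Euler product, and the local-finiteness argument for infinitely many base edges. That last step is not needed, since the distinct closed paths $C_n$ already contribute separately to $\sum_e(B^4)_{e,e}$.
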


\begin{proof}
Using \eqref{eq:transition} around $C_n$ gives
\begin{align*}
 W(C_n)
 &=b(e_n,\bar e_n)b(\bar e_n,f_n)
   b(f_n,\bar f_n)b(\bar f_n,e_n)\\
 &=\bigl(i(\bar e_n)-1\bigr)i(f_n)
   \bigl(i(\bar f_n)-1\bigr)i(e_n),
\end{align*}
which is the asserted positive number.  The cycle uses two distinct
unoriented edges, so it is not a square of a length-two cycle and is primitive.
Each $C_n$ supplies four contributions to $\Tr(B^4)$, one for each choice of
starting edge around the cycle.
Since all contributions are nonnegative, the trace diverges.
\end{proof}

\begin{corollary}[The one-sided comb example]\label{cor:combdiv}
For $n\ge1$, let
\[
 C_n=(a_n,\bar a_n,b_{n,0},\bar b_{n,0}).
\]
Then
\begin{equation}\label{eq:Cnweight}
 W(C_n)=(q-1)\cdot1\cdot q\cdot1=q(q-1).
\end{equation}
Thus the global edge-indexed zeta function of $Y_q$ is undefined as a formal
power series.  On the other hand, the ordinary unweighted Ihara zeta of the
underlying one-sided comb graph is $1$.
\end{corollary}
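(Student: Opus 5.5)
We would deduce \cref{cor:combdiv} directly from \cref{thm:obstruction}, applied to the triples $(v_n,a_n,b_{n,0})$ for $n\ge1$. The hypotheses are read off from \eqref{eq:horizontalindices} and \eqref{eq:verticalindices}. First, $o(a_n)=o(b_{n,0})=v_n$ and $a_n\ne b_{n,0}$. Next, $i(a_n)=1$ and $i(b_{n,0})=1$ because $(n,0)\ne(0,0)$. Finally, $i(\bar a_n)=q\ge2$ and $i(\bar b_{n,0})=q+1\ge2$. The unoriented pairs $\{a_n,\bar a_n\}$ and $\{b_{n,0},\bar b_{n,0}\}$ are different for different $n$, so the cycles $C_n$ are pairwise distinct.

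\Cref{thm:obstruction} then says that each $C_n$ is positive and primitive. To display the factors in the stated order, we expand $W(C_n)$ along the cycle using \eqref{eq:transition}:
\[
b(a_n,\bar a_n)=q-1,\quad b(\bar a_n,b_{n,0})=1,\quad b(b_{n,0},\bar b_{n,0})=q,\quad b(\bar b_{n,0},a_n)=1.
\]
The product is $q(q-1)$, which is \eqref{eq:Cnweight}. Because $q\ge2$, this weight is positive. The theorem then gives $\Tr(B^4)=\infty$, so the $u^4$ coefficient of the logarithm of the global Euler product is infinite. Hence the global zeta is not a formal power series.

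For the unweighted Ihara zeta, we replace every index by the non-backtracking rule on the underlying graph, in which a transition $e\to f$ is allowed exactly when $t(e)=o(f)$ and $f\ne\bar e$. A closed non-backtracking cycle without tails must project to a nontrivial cycle in the graph, because a tree contains no closed reduced path. The underlying comb is a tree, so there are no such cycles. The Euler product is therefore empty and equals $1$.

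There is no real obstacle in this argument. The only point needing care is the last claim: the product is taken over tailless cycles, and the corresponding determinant formula does not apply to this infinite graph, so we would state the result as the empty Euler product rather than as $\det(I-uA)^{-1}$.
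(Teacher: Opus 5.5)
Your proposal is correct and follows the same route the paper takes: apply \cref{thm:obstruction} to the triples $(v_n,a_n,b_{n,0})$, read off the indices from \eqref{eq:horizontalindices}--\eqref{eq:verticalindices}, and evaluate the four multiplicities via \eqref{eq:transition}. For the last claim you also argue as the paper implicitly does, noting that the underlying comb is a tree, so the unweighted Euler product is empty. The only hypothesis you leave implicit is local finiteness of $(Y_q,i)$, which is immediate from \eqref{eq:degree}.
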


We next record the regularity and growth features of the quotient before
passing to its two-step dynamics.

At the root, the two outgoing indices sum to $2r=q+2$.  At a top vertex
$v_n$, $n\ge1$, the outgoing indices are $q,1,1$, and at a vertical vertex
they are $q+1,1$.  Thus
\begin{equation}\label{eq:degree}
 \sum_{o(e)=v}i(e)=q+2
 \qquad(v\in VY_q),
\end{equation}
so the Bass--Serre universal cover is $(q+2)$-regular.

The number of vertices at root distance $m$ is $m+1$: it consists of $v_m$
and the vertices $w_{n,m-n}$ for $0\le n<m$.  Therefore
\[
 |B_{Y_q}(v_0,N)|=\sum_{m=0}^N(m+1)
 =\frac{(N+1)(N+2)}2,
\]
where $B_{Y_q}(v_0,N)=\{x\in VY_q:d_{Y_q}(x,v_0)\le N\}$ is the closed
graph-distance ball.  This explains the quadratic growth.

\begin{remark}[Period two]\label{rem:periodtwo}
The underlying graph is bipartite.  If
$\varepsilon(e)=(-1)^{d_{Y_q}(o(e),v_0)}$, then every nonzero transition satisfies
$\varepsilon(f)=-\varepsilon(e)$.  Hence all periodic edge paths have even
length.  Analytic mixing statements for the time-one edge shift must therefore
be formulated either for its square on one parity component or after adding an
aperiodic finite core.  In particular all local series below are even series in
$u$.
\end{remark}

The next definition makes precise the two-step object used in the recurrence
and counting statements.  It also prevents a possible ambiguity: the square
refers to composition of two transitions, not to entrywise squaring of $B$.

\begin{definition}[Squared root component]
\label{def:squared-root-component}
The \emph{two-step transition matrix} is the usual matrix product $B^2$.
Its communicating class containing the root edge $a_0$ is
\[
 \cC_{a_0}=
 \left\{e\in\OrE(Y_q):
 \begin{array}{l}
 (B^{2m})_{a_0,e}>0\text{ for some }m\ge0,\\
 (B^{2n})_{e,a_0}>0\text{ for some }n\ge0
 \end{array}
 \right\}.
\]
Restrict the two-step weights to this class,
\[
 B^{(2)}_{a_0}
 =\bigl((B^2)_{e,f}\bigr)_{e,f\in\cC_{a_0}},
\]
and define
\[
 \Sigma^{(2)}_{a_0}
 =\bigl\{(e_j)_{j\in\mathbb Z}\in\cC_{a_0}^{\mathbb Z}:
 (B^2)_{e_j,e_{j+1}}>0\text{ for every }j\in\mathbb Z\bigr\}.
\]
The corresponding one-sided shift is
\[
 \Sigma^{(2),+}_{a_0}
 =\bigl\{(e_j)_{j\in\mathbb N_0}\in\cC_{a_0}^{\mathbb N_0}:
 (B^2)_{e_j,e_{j+1}}>0\text{ for every }j\ge0\bigr\}.
\]
With the left shift $\sigma_2((e_j)_j)=(e_{j+1})_j$, these are respectively
the two-sided squared root component and its one-sided presentation.  The
former is the natural extension of the latter.
\end{definition}

Both shifts are irreducible by construction.  The positive two-step first
return in \cref{eq:f2} makes the one-sided presentation aperiodic and hence
topologically mixing, while local finiteness of $Y_q$ makes it locally compact.
All recurrence and pressure terminology below refers to this one-sided
presentation together with the multiplicity potential defined next.

\subsection{The multiplicity potential and pressure at infinity}
\label{sec:weighted-thermo-definitions}

The support shift alone forgets the number of non-backtracking lifts.  We
therefore formulate recurrence for a potential that retains exactly the
weights used by the zeta functions.

For a potential $\phi$ on the one-sided shift, write
\begin{equation}\label{eq:birkhoff-variation-conventions}
 S_n\phi(x)=\sum_{j=0}^{n-1}\phi(\sigma_2^j x),
 \qquad
 \operatorname{var}_n(\phi)=
 \sup\bigl\{|\phi(x)-\phi(y)|:x,y\in\Sigma^{(2),+}_{a_0},
 x_j=y_j\text{ for }0\le j<n\bigr\}.
\end{equation}
The potential has \emph{summable variations} if
$\sum_{n\ge1}\operatorname{var}_n(\phi)<\infty$, and it is \emph{weakly
H\"older} if $\operatorname{var}_n(\phi)\le C\theta^n$ for some $C>0$ and
$0<\theta<1$.

\begin{definition}[Two-step multiplicity potential]
\label{def:multiplicity-potential}
On the one-sided squared root component $\Sigma^{(2),+}_{a_0}$ define
\begin{equation}\label{eq:multiplicity-potential}
 \phi_q(x)=\log\!\bigl((B^2)_{x_0,x_1}\bigr).
\end{equation}
Equivalently, on an allowed transition $e\to f$ of the squared component we
write $\phi_q(e,f)=\log\!\bigl((B^2)_{e,f}\bigr)$.
\end{definition}

Every allowed entry of
$B^2$ is a positive integer at most $(q+1)^2$, so
$0\le\phi_q\le2\log(q+1)$.  Moreover, $\phi_q$ depends only on $(x_0,x_1)$;
hence $\operatorname{var}_n(\phi_q)=0$ for $n\ge2$.  In particular it is
bounded, weakly H\"older and has summable variations.  For every periodic
point $x$ of
period $n$,
\begin{equation}\label{eq:weighted-periodic-potential}
 \exp(S_n\phi_q(x))
 =\prod_{j=0}^{n-1}(B^2)_{x_j,x_{j+1}},
 \qquad x_n=x_0.
\end{equation}
Thus its periodic partition sums are precisely the two-step weighted path
sums used below.

For a one-sided transitive countable Markov shift $(\Sigma,\sigma)$, a state
$a$ and a potential $\phi$, set
\[
 \operatorname{Per}_a(n)
 =\{x\in\Sigma:\sigma^n x=x,\ x_0=a\}.
\]
Thus $\operatorname{Per}_a(n)$ consists of points fixed by $\sigma^n$; their
least period is allowed to divide $n$.  The Gurevich pressure is
\begin{equation}\label{eq:weighted-gurevich-pressure}
 P_G(\phi)=\limsup_{n\to\infty}\frac1n
 \log\sum_{x\in\operatorname{Per}_a(n)}e^{S_n\phi(x)}.
\end{equation}
For a transitive shift and a summable-variation potential this does not depend
on $a$ \cite{Sarig1999}.

We also recall the periodic-orbit definition of pressure at infinity.  If
$F$ is a finite set of states and $M\ge1$, put
\begin{align}
 \operatorname{Per}_a(n;F,M)
 &=\left\{x\in\operatorname{Per}_a(n):
 \#\{0\le j<n:x_j\in F\}\le\frac nM\right\},
 \label{eq:escaping-periodic-set}\\
 P_\infty(\phi;a,F,M)
 &=\limsup_{n\to\infty}\frac1n
 \log\sum_{x\in\operatorname{Per}_a(n;F,M)}e^{S_n\phi(x)},
 \label{eq:escaping-periodic-pressure}
\end{align}
where the logarithm of an empty sum is $-\infty$.  Define
\begin{equation}\label{eq:pressure-at-infinity-definition}
 P_\infty(\phi)
 =\inf_{F\Subset\cC_{a_0}}\ \inf_{M\ge1}
 P_\infty(\phi;a,F,M).
\end{equation}
Here $F\Subset\cC_{a_0}$ means that $F$ is finite.  The value is independent
of the base state for the present transitive system.  This is the topological
pressure at infinity used in \cite{RuhrSarig2022,Velozo2026}; it measures the
weighted exponential complexity of periodic trajectories that spend an
asymptotically negligible proportion of time in every fixed finite core.
Equivalently, if
$F_1\subset F_2\subset\cdots$ is any exhaustion of $\cC_{a_0}$ by finite
sets, then monotonicity in $F$ and cofinality of the exhaustion give
\[
 P_\infty(\phi)
 =\inf_{j\ge1}\inf_{M\ge1}P_\infty(\phi;a,F_j,M).
\]
Thus the formulation above agrees with the exhaustion-based definition in
\cite[Section~3.1]{Velozo2026}.

Accordingly, every assertion of strong positive recurrence below concerns
the pair $(\Sigma^{(2),+}_{a_0},\phi_q)$.  For a bounded weakly H\"older
potential of finite pressure, the strict inequality
$P_\infty(\phi)<P_G(\phi)$ is equivalent to strong positive recurrence
\cite[Theorem~8.2]{RuhrSarig2022}; see also \cite{Velozo2026}.  We use this
equivalent pressure-gap criterion whenever we call the present potential
strongly positive recurrent.

\section{First-return local determinants}
\label{sec:local}

Vertex-based generalized Ihara formulas for infinite regular graphs appear in
\cite{ChintaJorgensonKarlsson2015}.  The definition below is adapted to
edge-indexed quotient geodesics and uses first returns so that repeated visits
to the observation set are handled by a finite induced determinant.

\subsection{First-return matrices and local determinants}

Let $E\subset\OrE(Y)$ be finite.  Write $P=P_E$ for coordinate projection onto
$\mathbb C^E$, let $I$ be the identity on the full edge-state space, and put
$Q=I-P$.  Let $I_E$ denote the identity on $\mathbb C^E$.  A first-return path
of length $n$ from $e\in E$ to
$f\in E$ starts at $e$, ends at $f$, and has all intermediate states in
$E^c$.  Here ``length $n$'' means $n$ transitions, so the path is a state
sequence $(e_0,e_1,\ldots,e_n)$ with $e_0=e$ and $e_n=f$.  Its weight is the
product of its $n$ transition multiplicities.

\begin{definition}\label{def:firstreturn}
For $n\ge1$, define the $E\times E$ matrix $F_E(n)$ by weighted first-return
counts.  Equivalently,
\begin{align}
 F_E(1)&=PBP,\label{eq:F1}\\
 F_E(n)&=PBQ(QBQ)^{n-2}QBP\qquad(n\ge2).\label{eq:Fn}
\end{align}
The first-return generating matrix and local zeta function are
\begin{equation}\label{eq:localzeta}
 \cF_E(u)=\sum_{n\ge1}F_E(n)u^n,
 \qquad
 Z_E(u)=\det(I_E-\cF_E(u))^{-1}.
\end{equation}
\end{definition}

\begin{proposition}[Formal well-definedness]\label{prop:formalfinite}
If $Y$ is locally finite and every edge index is finite, every coefficient of
$\cF_E(u)$ is finite.  Hence $Z_E(u)\in1+u\mathbb C[[u]]$ is well defined.
\end{proposition}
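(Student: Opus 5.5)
The argument is a finite-propagation bound: a weighted path of fixed length $n$ started at a fixed edge can visit only finitely many states, and each transition contributes a finite weight. Fix $n\ge1$ and $e,f\in E$. The $(e,f)$ entry of $F_E(n)$ is a sum over state sequences $(e_0,\ldots,e_n)$ with $e_0=e$, $e_n=f$ and intermediate states in $E^c$, weighted by $\prod_j b(e_j,e_{j+1})$. This is the entrywise expansion of \eqref{eq:F1}--\eqref{eq:Fn}. All terms are nonnegative, so finiteness of the entry is equivalent to two facts: each admissible sequence has finite weight, and only finitely many sequences contribute.

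For the weights, I would note from \eqref{eq:transition} that every nonzero entry $b(e,f)$ is at most $i(f)$, which is finite by hypothesis. A product of $n$ such factors is therefore finite.

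For the count, I would use local finiteness. A nonzero transition $e\to f$ requires $o(f)=t(e)$. Since \eqref{eq:local-finite-indexed} forces $\sum_{o(f)=v}i(f)<\infty$ with every $i(f)\ge1$, each vertex has only finitely many outgoing oriented edges. By induction on $k$, the set $R_k(e)$ of states reachable from $e$ in exactly $k$ nonzero transitions is finite. Hence at most $\prod_{k<n}|R_k(e)|$ sequences of length $n$ contribute, and any such bound is finite. Equivalently, each row of $B$ has finitely many nonzero, finite entries, so $B$ maps finitely supported vectors to finitely supported vectors. Then $PBQ(QBQ)^{n-2}QBP$ is a finite product of such operators applied to basis vectors, so it is a well-defined finite matrix. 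Only finitely many entries of each factor are ever touched, and no infinite summation occurs.

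The conclusion is then formal. $\cF_E(u)\in u\,\mathbb C[[u]]^{E\times E}$ has zero constant term, so $\det(I_E-\cF_E(u))$ is a finite polynomial expression in the entries with constant term $1$. A power series with constant term $1$ is invertible in $\mathbb C[[u]]$, with inverse in $1+u\mathbb C[[u]]$; this gives $Z_E(u)$.

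I expect the only delicate step to be making explicit that the row-finiteness of $B$ comes from the local finiteness hypothesis. Column sums may be infinite, as the obstruction of \cref{thm:obstruction} shows, so the argument must propagate forward from $e$ rather than backward from $f$.
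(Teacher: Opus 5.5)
Your proof is correct and follows the paper's argument: local finiteness gives finite propagation from the finite set $E$, each transition weight is finite, and the resulting finite determinant has constant term one and is therefore invertible. Your closing aside is wrong, although the proof does not use it: in a locally finite graph every column of $B$ also has finitely many nonzero entries, because the edges $e$ with $t(e)=o(f)$ are reverses of edges issuing from $o(f)$; so column sums are finite and backward propagation from $f$ would work just as well, whereas what \cref{thm:obstruction} shows to diverge is the diagonal sum $\Tr(B^4)$ over infinitely many starting states, which never arises here because $E$ is finite.
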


\begin{proof}
A length-$n$ path starting in the finite set $E$ stays in the finite
graph-distance $n$-neighborhood of the finitely many endpoints of $E$.
Local finiteness gives only finitely many such edge paths, and every path has
finite weight.
The determinant in \eqref{eq:localzeta} is finite-dimensional.
\end{proof}
\subsection{Schur complements and Green functions}

The compressed resolvent $P(I-uB)^{-1}P$ is called the \emph{local Green
matrix} on $E$.  Its $(e,f)$ entry is the generating series of all weighted
paths from $e\in E$ to $f\in E$, with no restriction on intermediate visits
to $E$.  The theorem below says that these unrestricted paths are recovered by
concatenating first-return segments.

\begin{theorem}[Local resolvent identity]\label{thm:schur}
As matrices of formal power series,
\begin{equation}\label{eq:schur}
 P(I-uB)^{-1}P=(I_E-\cF_E(u))^{-1}.
\end{equation}
In particular,
\begin{equation}\label{eq:localdetgreen}
 Z_E(u)=\det\bigl(P(I-uB)^{-1}P\bigr).
\end{equation}
\end{theorem}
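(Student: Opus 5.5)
We prove \eqref{eq:schur} combinatorially, by decomposing paths at their visits to $E$, and then read off \eqref{eq:localdetgreen} by taking determinants.

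\emph{Step 1: both sides are well defined.}
For $e,f\in E$, the $u^n$ coefficient of the left side is $(B^n)_{e,f}$. This is finite by the argument of \cref{prop:formalfinite}: paths of length $n$ from $e$ stay in a finite neighbourhood, so only finitely many of them contribute, each with finite weight. Hence $P(I-uB)^{-1}P=\sum_{n\ge0}PB^nP\,u^n$ is a well-defined matrix of formal power series, even though $B$ itself acts on an infinite state space.

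On the right, $\cF_E(u)$ has zero constant term. Therefore $(I_E-\cF_E)^{-1}=\sum_{k\ge0}\cF_E(u)^k$ converges $u$-adically in the finite matrix ring $M_E(\mathbb C[[u]])$.

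\emph{Step 2: decompose paths at their visits to $E$.}
Fix $e,f\in E$ and $n\ge0$, and take a path $(e_0,\ldots,e_n)$ with $e_0=e$ and $e_n=f$. Let $0=j_0<j_1<\cdots<j_k=n$ be the times $j$ with $e_j\in E$. This splits the path uniquely into $k$ consecutive first-return segments $E\to E$ of lengths $j_{i}-j_{i-1}\ge1$.

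Conversely, concatenating any $k$ first-return segments whose endpoints match gives such a path, and the two constructions are inverse to each other. Because weights are products of transition multiplicities, the weight of the path is the product of the segment weights.

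Summing over all paths, the coefficient identity becomes
\[
 (B^n)_{e,f}=\sum_{k\ge0}\ \sum_{\substack{n_1+\cdots+n_k=n\\ n_i\ge1}}
 \bigl(F_E(n_1)\cdots F_E(n_k)\bigr)_{e,f},
\]
where the $k=0$ term is $\delta_{ef}\delta_{n0}$. For fixed $n$ only $k\le n$ contributes, so the sum is finite and all rearrangements are legitimate for nonnegative finite sums. The right-hand side is exactly $[u^n]\bigl(\sum_k\cF_E(u)^k\bigr)_{e,f}$, which proves \eqref{eq:schur}.

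\emph{Alternative algebraic route.}
One could instead write $I-uB$ in $2\times2$ block form with respect to $P$ and $Q$ and use the Schur complement
\[
 P(I-uB)^{-1}P=\bigl(I_E-uPBP-u^2PBQ(I-uQBQ)^{-1}QBP\bigr)^{-1}.
\]
Expanding $(I-uQBQ)^{-1}$ as a geometric series and comparing with \eqref{eq:F1}--\eqref{eq:Fn} gives $uPBP+u^2PBQ(I-uQBQ)^{-1}QBP=\cF_E(u)$.

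This route needs care on the infinite space. One must check that each coefficient of $PBQ(QBQ)^mQBP$ is a finite sum, again by finite propagation, and that the block inversion holds coefficientwise. For this reason I would present the path-decomposition argument as the proof and mention the Schur form only as an equivalent reading.

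\emph{Step 3: determinants.}
Both sides of \eqref{eq:schur} are invertible matrices over $\mathbb C[[u]]$ with constant term $I_E$. Taking determinants of $E\times E$ matrices gives
\[
 \det\bigl(P(I-uB)^{-1}P\bigr)=\det(I_E-\cF_E(u))^{-1}=Z_E(u),
\]
which is \eqref{eq:localdetgreen}.

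\emph{Main obstacle.}
The only delicate point is ensuring that no infinite-dimensional manipulation is used without justification. Everything must be kept coefficientwise, with finiteness supplied by local finiteness exactly as in \cref{prop:formalfinite}. This matters because $\Tr(B^4)=\infty$ by \cref{thm:obstruction}, so global operator identities are unavailable.
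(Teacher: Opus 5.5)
Your proof is correct, but your main argument is not the paper's. The paper's proof is exactly the route you set aside as the ``alternative algebraic route.'' It proceeds as follows.
\begin{enumerate}[label=(\roman*)]
\item It writes $I-uB$ in block form relative to $P\oplus Q$.
\item It notes that the lower-right block $I-uQBQ$ is formally invertible.
\item It identifies the Schur complement $I_E-uPBP-u^2PBQ(I-uQBQ)^{-1}QBP$ with $I_E-\cF_E(u)$ using \eqref{eq:F1}--\eqref{eq:Fn}.
\item It concludes by block inversion.
\end{enumerate}
The determinant statement then follows exactly as in your Step~3.

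Your version decomposes paths at their visits to $E$ and proves the identity coefficient by coefficient. It works from the path description of $F_E(n)$ rather than its matrix form.

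Your route buys transparency. Every identity is visibly a finite sum, and the decomposition is the same unique first-return decomposition the paper invokes immediately afterwards for the local Euler product (\cref{thm:localeuler}).

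The paper's route buys brevity and a reusable template. The same block elimination reappears in the one-cell scattering matrix (\cref{prop:onecell-schur}) and in the algebraicity theorem (\cref{thm:stationary-algebraicity}). There, effective blocks are combined purely algebraically through Schur complements.

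Your caution about the block route is reasonable but discharged in one line:
\begin{itemize}
\item By local finiteness, $B$ is row-finite.
\item Row-finite matrices indexed by $E\sqcup E^c$ form an associative ring.
\item The usual $2\times2$ block-inverse formula therefore holds verbatim in formal power series over that ring.
\end{itemize}
No trace or trace-class property enters. The divergence $\Tr(B^4)=\infty$ obstructs the global determinant of $I-uB$, not these compressed resolvent identities.

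A minor point: nonnegativity plays no role in your rearrangements, since each coefficient identity is a finite sum.
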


\begin{proof}
Relative to $P\oplus Q$, the formal operator $I-uB$ has block form
\[
 \begin{pmatrix}
 I_E-uPBP&-uPBQ\\
 -uQBP&I-uQBQ
 \end{pmatrix}.
\]
The inverse of its lower-right block exists formally.  The Schur complement of
that block is
\[
 I_E-uPBP-u^2PBQ(I-uQBQ)^{-1}QBP=I_E-\cF_E(u),
\]
and block inversion gives \eqref{eq:schur}.
\end{proof}

Thus the infinite complement $E^c$ enters only through a finite Schur
correction.

\subsection{Local Euler product}

Let $\mathcal P_E$ be the set of cyclic-rotation classes $[C]$ of primitive
positive cycles that meet $E$.  The length $\ell(C)$ and weight $W(C)$ do not
depend on the representative of $[C]$.  A cycle meeting $E$ decomposes
uniquely, up to cyclic rotation, into first-return segments between consecutive
visits to $E$.

\begin{theorem}[Local Euler product]\label{thm:localeuler}
The following identity holds formally:
\begin{equation}\label{eq:localeuler}
 Z_E(u)=\prod_{[C]\in\mathcal P_E}
 \bigl(1-W(C)u^{\ell(C)}\bigr)^{-1}.
\end{equation}
\end{theorem}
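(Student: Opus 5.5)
We prove \eqref{eq:localeuler} by comparing logarithms, i.e.\ the standard Bowen--Lanford argument run on the induced system of first returns to $E$. Both sides lie in $1+u\mathbb C[[u]]$. Every coefficient of $\cF_E(u)$ is finite by \cref{prop:formalfinite}, so we may use the formal identity
\[
 \log\det(I_E-\cF_E(u))^{-1}=\sum_{k\ge1}\frac1k\Tr\bigl(\cF_E(u)^k\bigr).
\]
It therefore suffices to show
\[
 \sum_{k\ge1}\frac1k\Tr\bigl(\cF_E(u)^k\bigr)
 =\sum_{[C]\in\mathcal P_E}\sum_{j\ge1}\frac1j W(C)^ju^{j\ell(C)}.
\]
Coefficientwise this is a finite identity. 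A closed cycle of length $m$ meeting $E$ stays within distance $m$ of the finite set $E$, so only finitely many such cycles exist. This also shows that the product on the right of \eqref{eq:localeuler} is a well-defined formal series.

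\emph{Expand the left side.} The quantity $\Tr(\cF_E(u)^k)$ is the weighted sum over closed words $(s_1,\dots,s_k)$ of first-return segments, where each $s_i$ runs from $e_{i-1}\in E$ to $e_i\in E$ with $e_k=e_0$. By \eqref{eq:F1}--\eqref{eq:Fn}, concatenating the segments gives a closed admissible edge sequence $\tilde C$ that starts at a point of $E$ and visits $E$ exactly $k$ times per period. Its weight is $W(\tilde C)$, and its length is the sum of the segment lengths. Conversely, every based closed cycle starting in $E$ with $k$ visits to $E$ decomposes uniquely into first-return segments: cut at the consecutive visits to $E$. So $\Tr(\cF_E(u)^k)$ equals the weighted count of closed cycles based at a visit to $E$ that make exactly $k$ visits to $E$.

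\emph{Group by primitive class.} Any positive closed cycle meeting $E$ can be written $C=D^j$ with $D$ primitive, and then $D$ also meets $E$. If $D$ visits $E$ exactly $p$ times per period, then $D^j$ has $k=jp$ visits. The number of based representatives of $D^j$ starting at an $E$-visit is $p$: one chooses which of the $p$ visits of $D$ comes first, and the $j$ repetitions give no new choices because rotating by a full period of $D$ reproduces the same sequence. Primitivity of $D$ makes these $p$ rotations distinct. The contribution of $D^j$ to the left side is therefore
\[
 \frac1{jp}\cdot p\cdot W(D)^ju^{j\ell(D)}=\frac1jW(D)^ju^{j\ell(D)},
\]
which matches the right side term by term. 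Exponentiating then gives \eqref{eq:localeuler}.

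\emph{Main obstacle.} The delicate step is the rotation count above: showing that the $E$-based rotations of $D^j$ are in bijection with the $p$ visits of $D$. This requires primitivity in the sense of \cref{def:pathweight}, together with the observation that the first-return factorization is rotation-equivariant. That is, rotating $\tilde C$ to the next $E$-visit corresponds to cyclically rotating the word $(s_1,\dots,s_k)$. Nontrivial weights play no role here, since $W$ is multiplicative along the concatenation and invariant under rotation.
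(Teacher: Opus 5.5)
Your proposal is correct and follows the same route as the paper's proof. You expand $\log Z_E$ as $\sum_k \frac1k\,\mathrm{Tr}(\mathcal F_E(u)^k)$ and identify trace monomials with $E$-based rotations of closed cycles via the unique first-return decomposition. The $1/k$ then absorbs the choice of starting visit. Your explicit count $\frac{1}{jp}\cdot p=\frac1j$ for $D^j$, and your remark that only finitely many cycles occur in each degree, spell out details the paper compresses into one line.
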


\begin{proof}
Expanding the finite-dimensional determinant gives
\[
 \log Z_E(u)=\sum_{m\ge1}\frac1m\Tr\bigl(\cF_E(u)^m\bigr).
\]
A monomial in $\Tr(\cF_E(u)^m)$ is a cyclic concatenation of $m$
first-return segments.  Unique first-return decomposition identifies these
monomials with powers of primitive cycles meeting $E$, together with a chosen
starting visit to $E$.  The factor $1/m$ cancels the number of choices of that
starting return, producing the logarithm of the Euler product
\eqref{eq:localeuler}.
\end{proof}

\begin{remark}[Changing the observation set]
Let $E\subset E'$ be finite and put $D=E'\setminus E$.  If
$B^{(E)}$ denotes the transition matrix restricted to the states in $E^c$,
then
\[
 \frac{Z_{E'}^B(u)}{Z_E^B(u)}=Z_D^{B^{(E)}}(u),
\]
where the superscript records the underlying transition matrix.  Indeed, both
sides are the Euler product over primitive cycles that meet $E'$ but avoid
$E$.  Thus the ratio records exactly the additional periodic data seen after
the observation set is enlarged.
\end{remark}

\subsection{Finite exhaustion and coefficient stabilization}

Let $Y^{(N)}$ be an increasing \emph{finite exhaustion}: each $Y^{(N)}$ is a
finite subgraph, $Y^{(N)}\subset Y^{(N+1)}$, and their union is $Y$.  Assume it
contains the $N$-neighborhood of $E$, and let $B_N$ be the corresponding
compressed transition matrix.  Define $\cF_{E,N}$ and $Z_{E,N}$ from $B_N$ by
the same formulas as in \eqref{eq:F1}--\eqref{eq:localzeta}.

Finite propagation gives coefficient stabilization: if $Y^{(N)}$ contains the
graph-distance $m$-neighborhood of the endpoints of $E$, then the coefficients
of $\cF_{E,N}(u)$ and $Z_{E,N}(u)$ through degree $m$ agree with those of
$\cF_E(u)$ and $Z_E(u)$.  Indeed, every contributing path of length at most
$m$ stays in that neighborhood, and the degree-$m$ determinant coefficient
uses only return coefficients of degrees at most $m$.  Hence $Z_E$ is
computable to arbitrary finite order from finite data.

\section{Exact stationary computation and algebraicity}
\label{sec:explicit}

\paragraph{Reading guide to the calculation.}
We successively eliminate a vertical cusp, one stationary cell and the
horizontal half-line; the exceptional root is attached last.  Thus
\[
 \text{vertical cusp }V
 \ \longrightarrow\ \text{cell scattering }\mathsf S
 \ \longrightarrow\ \text{first passages }(x,y)
 \ \longrightarrow\ \text{root factor }K
 \ \longrightarrow\ Z_{a_0}.
\]

\subsection{Vertical-cusp elimination}

Fix $n\ge1$.  Consider a path that starts at $v_n$, descends exactly $k$ edges
along its vertical cusp, turns, and returns to $v_n$ without visiting $v_n$ in
between.  Its quotient edge sequence is
\[
 b_{n,0},b_{n,1},\ldots,b_{n,k-1},
 \bar b_{n,k-1},\ldots,\bar b_{n,1},\bar b_{n,0}.
\]
The downward transitions have weight $1$, the turn has weight
$(q+1)-1=q$, and the remaining $k-1$ upward transitions have weight $q+1$.
Therefore
\begin{equation}\label{eq:excursionweight}
 W_{\rm vert}(k)=q(q+1)^{k-1}.
\end{equation}

\begin{proposition}[Vertical cusp excursion]\label{prop:vertical}
The complete generating function of a vertical excursion at a non-root top
vertex is
\begin{equation}\label{eq:vertical}
 V(u)
 =\sum_{k\ge1}q(q+1)^{k-1}u^{2k}
 =\frac{q u^2}{1-(q+1)u^2}.
\end{equation}
It has first singularities at $u=\pm(q+1)^{-1/2}$.
\end{proposition}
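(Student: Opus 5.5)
The plan is to decompose a vertical excursion according to its depth, add up the depth-$k$ weights, and then read off the poles of the resulting rational function. A vertical excursion at $v_n$, $n\ge1$, means a path that leaves $v_n$ along $b_{n,0}$, stays inside the cusp ray, and first returns to $v_n$ along $\bar b_{n,0}$.

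The first step is to show that any such excursion has the rigid form displayed before \eqref{eq:excursionweight}: it descends monotonically to some depth $k\ge1$, turns once, and climbs monotonically back. Since the ray is a path graph, a direction change in the interior of a descent is a backtrack $b_{n,j}\to\bar b_{n,j}$. In the edge-state path $(e_0,\dots)$ such a turn can occur only once before the return to $v_n$, for the following reason. After turning upward at $w_{n,j}$, the path sits on $\bar b_{n,j-1}$. The only transitions available from there are to continue up, or to backtrack via $b(\bar b_{n,j-1},b_{n,j-1})=i(b_{n,j-1})-1=0$. So upward motion can never reverse inside the ray; it is forced all the way back to $v_n$.

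Downward motion, by contrast, can reverse, with weight $i(\bar b_{n,k})-1=q$. It can also continue down, with weight $i(b_{n,k})=1$. Both are positive, so the depth $k$ is a free parameter. The one remaining boundary case is $k=1$, where the turn happens at $w_{n,1}$. Its weight $b(b_{n,0},\bar b_{n,0})=q$ agrees with \eqref{eq:excursionweight}.

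Hence excursions are in bijection with $k\ge1$. The excursion of depth $k$ has length $2k$ (that is, $2k$ edges) and weight $W_{\rm vert}(k)=q(q+1)^{k-1}$. Summing gives $V(u)=\sum_{k\ge1}q(q+1)^{k-1}u^{2k}$. This is a geometric series with ratio $(q+1)u^2$, so it equals $qu^2/(1-(q+1)u^2)$ for $|u|<(q+1)^{-1/2}$, and as formal power series.

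For the singularities, the closed form is rational, with numerator $qu^2$. The numerator vanishes only at $u=0$, where the denominator equals $1$, so there is no cancellation. The only singularities are therefore the zeros of $1-(q+1)u^2$, which are the simple poles $u=\pm(q+1)^{-1/2}$. These lie on the boundary of the disk of convergence, so they are the first singularities.

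The step that needs the most care is the uniqueness of the turn. Getting it right depends on keeping track of which indices enter the backtracking weights $i(f)-1$: the outward index $i(b_{n,k})=1$ kills upward-to-downward reversal, while the inward index $q+1$ permits the downward-to-upward one.
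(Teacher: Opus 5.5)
Your proposal is correct and follows the paper's route. Like the paper, you classify excursions by depth $k$, read off the weight $q(q+1)^{k-1}$ from the downward, turning and upward multiplicities, and sum the geometric series. Your check that an upward-to-downward reversal has weight $i(b_{n,j})-1=0$ for $n\ge1$ makes explicit the single-turn structure that the paper assumes in its description of the excursion (and invokes again in the one-cell Schur-complement proof).
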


At the exceptional root, both boundary passages have multiplicity $r$; this
modification is incorporated in the factor $K(u)$ in \eqref{eq:Kroot}.

\subsection{Root first-return coefficients}

Choose the one-state observation set
\[
 E=\{a_0\}.
\]
Write
\[
 \cF_{a_0}(u)=\sum_{m\ge1}f_m(q)u^m.
\]
By \cref{rem:periodtwo}, $f_{2m+1}(q)=0$.  The first return of length two is
the root horizontal oscillation
$(a_0,\bar a_0,a_0)$, and hence
\begin{equation}\label{eq:f2}
 f_2(q)=(q-1)(r-1)=\frac{q(q-1)}2.
\end{equation}

The three first-return state sequences of length four are
\begin{align*}
 &(a_0,\bar a_0,b_{0,0},\bar b_{0,0},a_0),\\
 &(a_0,a_1,\bar a_1,\bar a_0,a_0),\\
 &(a_0,b_{1,0},\bar b_{1,0},\bar a_0,a_0).
\end{align*}
Multiplying their transition weights gives
\begin{equation}\label{eq:f4}
 f_4(q)=\frac14q^4+\frac74q^3-\frac12q^2-q.
\end{equation}
Continuing the same finite recursion gives
\begin{equation}\label{eq:f6}
 f_6(q)=\frac18q^6+\frac98q^5+\frac{21}{4}q^4
        +\frac14q^3-2q^2-q.
\end{equation}

For reproducibility, let $g_m(e)$ be the total weight of length-$m$ paths from
$a_0$ to $e$ that avoid $a_0$ at times $1,\ldots,m$.  Thus
$g_0(e)=\one_{\{e=a_0\}}$.  Removing every path as soon as it returns to
$a_0$, use
\begin{align}
 f_{m+1}(q)&=\sum_{e\ne a_0}g_m(e)b(e,a_0),\label{eq:rec-f}\\
 g_{m+1}(f)&=\one_{\{f\ne a_0\}}
 \sum_e g_m(e)b(e,f).\label{eq:rec-p}
\end{align}
Only states within distance $m$ occur, so this is a finite exact calculation at
each stage.

Since $Z_{a_0}(u)=(1-\cF_{a_0}(u))^{-1}$, we obtain
\begin{align}
 Z_{a_0}(u)
 &=1+\frac{q(q-1)}2u^2
 +\left(\frac12q^4+\frac54q^3-\frac14q^2-q\right)u^4\notag\\
 &\quad+\left(\frac12q^6+\frac94q^5+\frac{27}{8}q^4
 -\frac38q^3-q^2-q\right)u^6+O(u^8).
\label{eq:localexpansion}
\end{align}

\begin{table}[ht]
\centering
\caption{The first three nonzero root first-return coefficients.}
\label{tab:first-return-coefficients}
\begin{tabular}{c|rrr}
\toprule
$q$ & $f_2(q)$ & $f_4(q)$ & $f_6(q)$\\
\midrule
$2$ & $1$ & $14$ & $120$\\
$4$ & $6$ & $164$ & $2988$\\
$6$ & $15$ & $678$ & $21360$\\
\bottomrule
\end{tabular}
\end{table}

\paragraph{Analyticity near the origin.}

By \eqref{eq:degree}, the row sum of $B$ is
\[
 \sum_f b(e,f)=q+1.
\]
Consequently the total weight of length-$m$ paths from a fixed state is at
most $(q+1)^m$.  Therefore every finite local first-return matrix converges
absolutely for $|u|<(q+1)^{-1}$, and its local determinant is analytic near the
origin.  For the root observation state this elementary bound is sharp, as the
stationary formula below shows.

\subsection{Operator Schur complement and the algebraic formula}
\label{subsec:stationary-schur}

We now carry out the stationary boundary calculation completely, using the
vertical excursion series $V$ from \eqref{eq:vertical}.
At a non-root top vertex $v_n$, retain the two arrival states
\[
 A_n=a_{n-1}\quad\text{(arrival from the left)},
 \qquad
 B_n=\bar a_n\quad\text{(arrival from the right)}.
\]
We give the block calculation, since retaining the arrival direction is
essential: a scalar vertex recursion would lose the factor subtracted on an
immediate quotient reversal.  Order the boundary arrival channels as
$(A_n,B_n)$ and the horizontal exit channels as (left,right).

\begin{proposition}[One-cell operator Schur complement]
\label{prop:onecell-schur}
In the generating-matrix convention, elimination of all oriented states in
the vertical ray at $v_n$ produces the effective scattering matrix
\begin{equation}\label{eq:sigma-schur}
 \mathsf S(u)=
 u\begin{pmatrix}q-1&1\\ q&0\end{pmatrix}
 +uV(u)\binom{1}{1}\begin{pmatrix}q&1\end{pmatrix}
 =\begin{pmatrix}\alpha&\beta\\ \gamma&\delta\end{pmatrix}.
\end{equation}
Equivalently, its four entries are
\begin{align}
 \alpha(u)&=u\bigl(q-1+qV(u)\bigr),
 &\beta(u)&=u\bigl(1+V(u)\bigr),\label{eq:scatter1}\\
 \gamma(u)&=uq\bigl(1+V(u)\bigr),
 &\delta(u)&=uV(u).\label{eq:scatter2}
\end{align}
\end{proposition}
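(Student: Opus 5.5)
The plan is to read $\mathsf S(u)$ as a transfer generating matrix. Its $(X,Y)$ entry, with $X\in\{A_n,B_n\}$ an arrival state at $v_n$ and $Y$ a horizontal exit state, is the weighted sum $\sum W\,u^{\#\text{transitions}}$ over all quotient edge paths that start at $X$, end at $Y$, and have all intermediate states in the vertical ray at $v_n$. The exit states are $\bar a_{n-1}$ (left) and $a_n$ (right). This is exactly the Schur complement $PBP\,u+u^2PBQ(I-uQBQ)^{-1}QBP$ of \cref{thm:schur}, with $P$ projecting onto the horizontal states at $v_n$ and $Q$ onto the ray states. I would then prove \eqref{eq:sigma-schur} by splitting each such path into a direct part and a part that passes through the ray.

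\emph{Direct transitions.} These come from \eqref{eq:transition} together with \eqref{eq:rootindices}--\eqref{eq:horizontalindices}. From $A_n=a_{n-1}$:
\begin{itemize}
\item the reversal to $\bar a_{n-1}$ has weight $i(\bar a_{n-1})-1=q-1$; this holds also for $n=1$, since $i(\bar a_0)=q$;
\item the step to $a_n$ has weight $i(a_n)=1$.
\end{itemize}
From $B_n=\bar a_n$:
\begin{itemize}
\item the step to $\bar a_{n-1}$ has weight $i(\bar a_{n-1})=q$;
\item the reversal to $a_n$ has weight $i(a_n)-1=0$.
\end{itemize}
Each of these is a single transition, contributing $u$. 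Together they give $u\begin{pmatrix}q-1&1\\ q&0\end{pmatrix}$.

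\emph{Ray excursions.} Any path entering the ray must first take the transition into $b_{n,0}$. From either arrival state this transition has weight $i(b_{n,0})=1$, because it is not a reversal; this produces the column $\binom11$. The path is then trapped in the ray until it returns to $v_n$ along $\bar b_{n,0}$.

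Count the portion from the arrival state up to the state $\bar b_{n,0}$ for a descent of depth $k$. It consists of the entry transition of weight $1$ followed by the $2k-1$ internal transitions computed in \eqref{eq:excursionweight}. This gives exactly $2k$ transitions of total weight $q(q+1)^{k-1}$, and summing over $k$ gives $V(u)$ by \cref{prop:vertical}.

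From $\bar b_{n,0}$ there are three possible next states:
\begin{itemize}
\item $b_{n,0}$ again, with weight $i(b_{n,0})-1=0$, so there is no second consecutive excursion;
\item the left exit $\bar a_{n-1}$, with weight $q$;
\item the right exit $a_n$, with weight $1$.
\end{itemize}
This final transition contributes one more factor $u$ and produces the row $(q\ \ 1)$. Hence the ray contribution is $uV(u)\binom11(q\ \ 1)$, and expanding the product gives \eqref{eq:scatter1}--\eqref{eq:scatter2}.

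The main obstacle is bookkeeping rather than analysis. It consists of two points:
\begin{itemize}
\item fixing the $u$-counting convention, namely that the entry into $b_{n,0}$ is already absorbed in $V$ while the exit transition is the separate external factor $u$;
\item verifying that the vanishing weight $b(\bar b_{n,0},b_{n,0})=0$ excludes all words with several excursions, so that no geometric series $(1-\cdots)^{-1}$ appears.
\end{itemize}
A direct expansion of the Schur complement, with $QBQ$ nilpotent on each depth-truncation, would confirm that these path classes are exhaustive and disjoint.
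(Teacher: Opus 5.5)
Your proposal is correct and follows essentially the same route as the paper: the direct horizontal block $u\begin{pmatrix}q-1&1\\ q&0\end{pmatrix}$ plus the rank-one vertical correction $uV(u)\binom{1}{1}\begin{pmatrix}q&1\end{pmatrix}$, with the vanishing re-entry weight $i(b_{n,0})-1=0$ ruling out repeated excursions. Your explicit transition count (entry transition absorbed into $V$, exit transition supplying the extra $u$) is slightly more careful than the paper's wording, which loosely calls the vertical resolvent entry itself $V(u)$ when it is really $V(u)/u$ before the coupling factors.
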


\begin{proof}
Decompose the one-cell transition operator into horizontal boundary channels
and vertical internal states:
\[
 I-u\mathcal B_n=
 \begin{pmatrix}
  I-uH_{\rm hor}&-u\mathsf E_{\rm in}\\
  -u\mathsf E_{\rm out}&I-uT_v
 \end{pmatrix}.
\]
Here $\mathcal B_n$ is the transition operator for one stationary cell,
$H_{\rm hor}$ is its direct horizontal boundary block, and $T_v$ is the
vertical internal block,
and $\mathsf E_{\rm in},\mathsf E_{\rm out}$ are the boundary-to-interior and
interior-to-boundary coupling blocks.  The notation ${}^{\mathsf T}$ used
below denotes matrix transpose.
Formal block Gaussian elimination gives the boundary Schur complement
\begin{equation}\label{eq:operator-cell-schur}
 I-uH_{\rm hor}-u^2\mathsf E_{\rm in}(I-uT_v)^{-1}\mathsf E_{\rm out}.
\end{equation}
The direct horizontal block is
$uH_{\rm hor}=u\bigl(\begin{smallmatrix}q-1&1\\q&0\end{smallmatrix}\bigr)$:
the left exit following an arrival from the left is the immediate quotient
reverse and therefore has multiplicity $q-1$, while the other three direct
multiplicities are $1,q,0$ in the displayed order.

It remains to evaluate the second term in
\eqref{eq:operator-cell-schur}.  The vertical resolvent entry connecting the
entrance and return channel is precisely the already computed excursion
series $V(u)$.  Both arrival channels enter the vertical cusp ray with the same boundary
factor, giving the column $(1,1)^{\mathsf T}$; after return, the left and right
exit multiplicities are $q$ and $1$, giving the row $(q,1)$.  Hence the
resolvent correction is the rank-one matrix
$uV(u)(1,1)^{\mathsf T}(q,1)$.  No further term occurs: immediately re-entering
the vertical ray after its return has multiplicity $i(b_{n,0})-1=0$.  Adding the
direct and vertical blocks proves \eqref{eq:sigma-schur}.
\end{proof}

Here $\alpha$ and $\beta$ are respectively the left and right exits from
$A_n$, while $\gamma$ and $\delta$ are the left and right exits from $B_n$.
Thus the four scalar series used below are entries of a genuine operator Schur
complement rather than independently postulated path weights.

Let $x(u)$ be the first-passage series from $A_n$ to $B_{n-1}$, and let $y(u)$
be the first-passage series from $B_n$ to $B_{n-1}$.  Translation invariance
for $n\ge1$ gives the stationary Schur system
\begin{equation}\label{eq:xy-system}
 x=\alpha+\beta xy,
 \qquad
 y=\gamma+\delta xy.
\end{equation}
The two non-direct alternatives represented by the product $xy$ are
\[
 A_n\xrightarrow{\beta}A_{n+1}\xrightarrow{x}B_n
     \xrightarrow{y}B_{n-1},
 \qquad
 B_n\xrightarrow{\delta}A_{n+1}\xrightarrow{x}B_n
     \xrightarrow{y}B_{n-1}.
\]
Thus the factor $xy$ records a right step, first passage back to $B_n$, and
then first passage through the original cell.  This ordered product is also
the one that survives in the matrix generalization below.  Eliminating $y$
yields
\begin{equation}\label{eq:xquadratic}
 \delta x^2-\Delta x+\alpha=0,
 \qquad
 \Delta=1+\alpha\delta-\beta\gamma.
\end{equation}
The formal solution with $x(0)=0$ is
\begin{equation}\label{eq:xclosed}
 x(u)=\frac{2\alpha(u)}
 {\Delta(u)+\sqrt{\Delta(u)^2-4\alpha(u)\delta(u)}},
\end{equation}
where the square root is the branch $1+O(u^2)$.  Formula
\eqref{eq:xclosed} is precisely the stationary half-line continued fraction
written as a scalar algebraic Schur complement, with the two arrival directions
retained until the final elimination.

It remains to attach the exceptional root.  Starting in $B_0=\bar a_0$, a
direct return to $a_0$ contributes $u(r-1)$.  Alternatively one enters the
root vertical ray with multiplicity $r$, makes a vertical excursion, and exits
to $a_0$ with multiplicity $r$.  After each such excursion one may re-enter
that same vertical ray with multiplicity $r-1$.  Thus the complete root boundary
factor is the geometric sum
\begin{equation}\label{eq:Kroot}
 K(u)=u(r-1)+\frac{ur^2V(u)}{1-(r-1)V(u)}.
\end{equation}

\begin{theorem}[Exact root local zeta]\label{thm:algebraiclocal}
For the observation state $E=\{a_0\}$, the full first-return series and local
zeta function are
\begin{equation}\label{eq:exactFZ}
 \cF_{a_0}(u)=K(u)x(u),
 \qquad
 Z_{a_0}(u)=\frac1{1-K(u)x(u)},
\end{equation}
with $x$ and $K$ given by \eqref{eq:xclosed} and \eqref{eq:Kroot}.  In
particular $Z_{a_0}$ is algebraic of degree at most two over
$\mathbb Q(q,u)$.
\end{theorem}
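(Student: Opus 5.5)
The plan is to decompose every first return to $a_0$ at its last departure from the root and to read off the two factors $x$ and $K$ from that decomposition. A first-return path from $a_0$ begins by moving to the right of $v_0$ (horizontally along $a_0$). Before it can use $a_0$ again it must come back through $v_0$, and the only way into $v_0$ from that side is along $\bar a_0$. So the path consists of two pieces, joined at the first arrival in the state $B_0=\bar a_0$:
\begin{itemize}[label={}]
\item a segment from the arrival channel $A_1=a_0$ to the first arrival at $B_0$, which never visits $a_0$ in between;
\item a segment from $B_0$ back to $a_0$, which stays in the root cell (the root vertex and its vertical ray) apart from its final state.
\end{itemize}
One point needs checking. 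Leaving $B_0$ into $a_0$ ends the return, so the second segment cannot pass to the right again. Also, $\bar a_0$ is entered only from the right. The weights are multiplicative across the junction, because the transition weight out of $\bar a_0$ belongs to the second segment. Hence $\cF_{a_0}=(\text{first-passage series }A_1\to B_0)\cdot(\text{root factor})$.

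Next I identify the first factor with $x$, using the stationary cells $n\ge1$. A path from $A_1$ to the first arrival at $B_0$ lives in the cells $n\ge1$; cell $1$ is an ordinary stationary cell by \eqref{eq:horizontalindices}--\eqref{eq:verticalindices}. I decompose such a path by its successive visits to the top vertices. At $v_n$, \cref{prop:onecell-schur} eliminates every vertical excursion, and the four effective transitions out of the arrival channels are $\alpha,\beta,\gamma,\delta$. For $\beta$ and $\delta$ I must check that the exit weight to the right is counted once: for $\beta$ the step $a_{n-1}\to a_n$ has weight $i(a_n)=1$, and the effective edge for $\delta$ absorbs the return-to-$a_n$ factor. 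A first step in one of these moves either exits left directly ($\alpha$ or $\gamma$) or goes right. In the latter case, translation invariance shows that the path makes a first passage from $A_{n+1}$ back to $B_n$, with series $x$, and then a first passage from $B_n$ to $B_{n-1}$, with series $y$. This gives \eqref{eq:xy-system}. To justify the system as an identity of formal power series I would note that each coefficient involves only finitely many paths, that the decomposition by first step is a bijection, and that $x,y\in u\,\mathbb C[[u]]$. The system then has a unique formal solution with $x(0)=y(0)=0$, obtained by coefficient recursion. Eliminating $y=\gamma/(1-\delta x)$ gives \eqref{eq:xquadratic}. Among its two formal roots, only \eqref{eq:xclosed} vanishes at $u=0$: $\Delta(0)=1$ and $\alpha(0)=0$, while the other root has a pole.

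For the root factor, I classify paths from $\bar a_0$ to $a_0$ inside the root cell. Either the path turns directly, with weight $i(a_0)-1=r-1$ by \eqref{eq:transition}, contributing $u(r-1)$. Or it enters $b_{0,0}$ with weight $r$ and makes a vertical excursion. That excursion has generating function $V$, since the ray below $v_0$ has the same indices beyond $b_{0,0}$: $i(\bar b_{0,0})=q+1$ gives the same turn weight and the same upward weights. The path then re-enters the ray with weight $i(b_{0,0})-1=r-1$ some number $j\ge0$ of times, and finally exits to $a_0$ with weight $i(a_0)=r$. Summing the geometric series gives \eqref{eq:Kroot}. I must keep careful track of the power of $u$ assigned to each transition, matching the convention behind $V$ and $\mathsf S$: one factor $u$ per transition, with $V$ counting the $2k$ transitions of the ray excursion. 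This bookkeeping is the step I expect to be the main source of error, so I would verify it against the computed coefficients $f_2,f_4$ in \eqref{eq:f2}--\eqref{eq:f4}.

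Finally, $Z_{a_0}=(1-Kx)^{-1}$ follows from \eqref{eq:localzeta} with $|E|=1$. Since $K,\alpha,\beta,\gamma,\delta\in\mathbb Q(q,u)$ and $x$ satisfies the quadratic \eqref{eq:xquadratic} over this field, $Z_{a_0}\in\mathbb Q(q,u)(x)$, a field extension of degree at most two. This proves the algebraicity claim.
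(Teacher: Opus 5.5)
Your proposal is correct and follows the paper's own argument. Each first return splits uniquely at its single visit to $B_0=\bar a_0$ into the stationary first passage $x$ from $A_1$ and the root-cell factor $K$. The one-state determinant then gives $Z_{a_0}=(1-Kx)^{-1}$, and algebraicity follows immediately from \eqref{eq:xquadratic}.

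The extra detail you supply (re-deriving \eqref{eq:xy-system}, choosing the branch, and deriving \eqref{eq:Kroot} with one $u$ per transition, where $V$ includes the entry step) is consistent with the paper and reproduces $f_2$ and $f_4$. One small citation fix: stationarity of cell $1$ also uses $i(\bar a_0)=q$ from \eqref{eq:rootindices}, not only \eqref{eq:horizontalindices}--\eqref{eq:verticalindices}.
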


\begin{proof}
After the initial state $a_0$ reaches $v_1$, every first return must first pass
through the stationary tail from $A_1$ to $B_0$; this contributes $x$.  The
last root stage is exactly the mutually exclusive collection summed by $K$.
The Markov property gives the product $Kx$, and the one-state first-return
identity gives \eqref{eq:exactFZ}.
\end{proof}

The closed form permits a complete singularity calculation.  We include the
details because they convert what might appear to be a numerical question into
an exact statement uniform in $q$.

\begin{theorem}[Exact dominant local poles]
\label{thm:exact-dominant-pole}
For every even $q\ge2$, the Taylor series of $Z_{a_0}(u)$ has radius
\begin{equation}\label{eq:exact-local-radius}
 R_u=\frac1{q+1}.
\end{equation}
Its only singularities on $|u|=R_u$ are simple poles at
$u=\pm(q+1)^{-1}$.
\end{theorem}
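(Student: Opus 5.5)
The plan is to treat everything as a function of $v=u^2$, which is legitimate because all the series involved are even by \cref{rem:periodtwo}, and then apply a Pringsheim--aperiodicity argument to the first-return series $F(v):=K(u)x(u)=\cF_{a_0}(u)$. By \cref{thm:algebraiclocal} we have $Z_{a_0}=(1-F)^{-1}$. The coefficients of $F$ are the nonnegative $f_{2m}(q)$, so the singularities of $Z_{a_0}$ come either from singularities of $F$ or from zeros of $1-F$. The target is the following list of facts:
\begin{itemize}[label=(\roman*)]
\item[(i)] $F$ extends holomorphically to a disk $|v|<R'$ with $R'>(q+1)^{-2}$;
\item[(ii)] $F\bigl((q+1)^{-2}\bigr)=1$;
\item[(iii)] $F'\bigl((q+1)^{-2}\bigr)\neq0$;
\item[(iv)] $1-F$ has no other zero in $|v|\le(q+1)^{-2}$.
\end{itemize}
Pulling back through $v=u^2$, (i)--(iv) give exactly two simple poles, at $u=\pm(q+1)^{-1}$, on the circle of radius $R_u$, and no singularity inside it.

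\textbf{Step (i).} I would locate the possible singularities of the closed forms \eqref{eq:vertical}, \eqref{eq:scatter1}--\eqref{eq:scatter2}, \eqref{eq:xclosed} and \eqref{eq:Kroot}. There are three sources.
\begin{itemize}
\item The pole of $V$ lies at $v=(q+1)^{-1}$.
\item The pole of $K$ comes from $1-(r-1)V=0$, which gives $v=(q^2/2+q+1)^{-1}$. This exceeds $(q+1)^{-2}$ because $q^2/2+q+1<(q+1)^2$.
\item The branch points of $x$ are the zeros of $\Delta^2-4\alpha\delta$. There is also the possible vanishing of the denominator $\Delta+\sqrt{\cdot}$, which is harmless near the principal branch.
\end{itemize}
For the branch points I would clear denominators. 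After multiplying by $(1-(q+1)v)^2$, the quantity $\Delta^2-4\alpha\delta$ becomes an explicit polynomial in $v$ with coefficients in $\mathbb Z[q/2]$. I would then show it has no root with $|v|\le(q+1)^{-2}$, for example by a Rouché or dominant-term estimate uniform in even $q\ge2$. Alternatively, and more conceptually, I would argue that $x$ is a first-passage generating function with nonnegative coefficients. By Pringsheim its radius is then attained on the positive axis, so it suffices to check that $\Delta^2-4\alpha\delta>0$ on $[0,(q+1)^{-2}+\varepsilon]$.

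\textbf{Step (ii).} I would first try to get this from stochasticity. Since every row sum of $B$ equals $q+1$, the matrix $(q+1)^{-1}B$ is stochastic. Hence $F\bigl((q+1)^{-2}\bigr)$ is the probability that the associated walk started at $a_0$ ever returns to $a_0$. The horizontal drift is toward the root: from $v_n$ the walk steps left with weight $q$ against weight $1$ to the right, and similarly up the cusps with $q+1$ against $1$. This makes the walk recurrent, so the return probability is $1$. As a cross-check I would verify the identity directly: at $u=(q+1)^{-1}$, solve \eqref{eq:xy-system}, expecting $x=y=1$ in the probabilistic normalization, and check that $K$ evaluates to the complementary root exit mass. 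For (iii), $F$ has nonnegative coefficients and $f_2>0$, so $F'>0$ on $(0,R')$ and the pole is simple.

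\textbf{Step (iv).} Since $f_2,f_4>0$, the coefficient support of $F$ in $v$ has gcd one. The strict triangle inequality then gives $|F(v)|<F(|v|)\le1$ for every $|v|\le(q+1)^{-2}$ with $v$ not positive real. On the positive real segment $F$ is strictly increasing, so $v=(q+1)^{-2}$ is the only point there with $F=1$.

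I expect step (i) to be the main obstacle, specifically excluding branch points of the square root up to and slightly beyond $v=(q+1)^{-2}$ uniformly in $q$. If the positivity argument is not clean, I would fall back on the explicit discriminant polynomial and a direct sign check on the real segment, combined with Pringsheim.
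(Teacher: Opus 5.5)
Your proposal is correct and follows essentially the paper's own route. You place the singularities of $K$ and $x$ beyond $u_0=(q+1)^{-1}$, show $K(u_0)x(u_0)=1$, and then combine monotonicity, the strict triangle inequality from $f_2,f_4>0$, positivity of the derivative and evenness to get exactly the two simple poles $\pm u_0$. The step you expect to be hardest is in fact immediate: with $v=u^2$ one has $\Delta^2-4\alpha\delta=(1-v)^2(1-4qv)/(1-(q+1)v)^2$, so the only branch point is $v=1/(4q)>(q+1)^{-2}$, which is equivalent to $(q-1)^2>0$. Likewise, the direct substitution $V(u_0)=u_0$ gives $x(u_0)=K(u_0)=1$, so your drift heuristic for recurrence is not needed.
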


\begin{proof}
Put $t=u^2$.  Substitution of
$V=qt/(1-(q+1)t)$ into \eqref{eq:scatter1}--\eqref{eq:scatter2} gives
\begin{align}
 \Delta(u)&=\frac{1-(2q+1)t}{1-(q+1)t},
 \label{eq:Delta-simplified}\\
 \Delta(u)^2-4\alpha(u)\delta(u)
 &=\frac{(1-t)^2(1-4qt)}{(1-(q+1)t)^2}.
 \label{eq:discriminant-factorization}
\end{align}
Using $r=q/2+1$, the root factor also simplifies to
\begin{equation}\label{eq:K-simplified}
 K(u)=\frac{qu\bigl(1+(q+1)t\bigr)}
 {2-(q^2+2q+2)t}.
\end{equation}
Thus the first positive branch point of $x$ and the first positive pole of
$K$ are respectively
\[
 u_{\rm br}=\frac1{2\sqrt q},
 \qquad
 u_K=\sqrt{\frac2{q^2+2q+2}}.
\]
Both are strictly larger than $u_0=(q+1)^{-1}$.  The apparent pole of $V$ is
also farther away, and it cancels from the simplified expression for $x$.

At $u=u_0$ one has $V(u_0)=u_0$.  Direct substitution into
\eqref{eq:xclosed} and \eqref{eq:K-simplified} yields
\begin{equation}\label{eq:critical-Kx}
 x(u_0)=1,
 \qquad K(u_0)=1.
\end{equation}
Hence $1-K(u)x(u)$ vanishes at $u_0$.  The series $Kx=\cF_{a_0}$ has
nonnegative coefficients and a positive $u^2$ coefficient by \eqref{eq:f2};
it is therefore strictly increasing on $(0,u_0]$.  It follows that
$K(u)x(u)<1$ for $0\le u<u_0$.  By Pringsheim's theorem, a power series with
nonnegative coefficients and finite radius has a singularity at its positive
real radius.  Since the algebraic expression is analytic for $0\le u<u_0$,
this rules out a singularity of smaller modulus for
$Z_{a_0}=\sum_{m\ge0}(Kx)^m$.

If $|u|=u_0$ and $K(u)x(u)=1$, equality must hold in
$|K(u)x(u)|\le K(u_0)x(u_0)=1$.  Since the coefficients of both $u^2$ and
$u^4$ are positive by \eqref{eq:f2}--\eqref{eq:f4}, equality in the triangle
inequality forces $u^2/u_0^2=1$.  Hence $u=\pm u_0$, so there are no other
singularities on the convergence circle.  The derivative of $Kx$ at $u_0$ is
finite and strictly positive, so the zero of $1-Kx$ there is simple.  Evenness
from \cref{rem:periodtwo} gives the second simple pole at $-u_0$.
\end{proof}

The algebraic equation can be displayed without a square root.  If
$Z=Z_{a_0}$ and $F=1-Z^{-1}$, then \eqref{eq:xquadratic} with $F=Kx$ gives
\begin{equation}\label{eq:Zquadratic}
 \delta(Z-1)^2-\Delta KZ(Z-1)+\alpha K^2Z^2=0.
\end{equation}
All coefficients in \eqref{eq:Zquadratic} are rational functions of $q$ and
$u$.  Expansion of this equation reproduces \eqref{eq:f2}--\eqref{eq:f6} and
\eqref{eq:localexpansion}.  One additional exact numerical check is recorded
in \cref{tab:higher-coefficient-check}.
\begin{table}[ht]
\centering
\caption{A higher-order check of the algebraic root formula.}
\label{tab:higher-coefficient-check}
\begin{tabular}{c|rr}
\toprule
$q$ & $f_6(q)$ & $f_8(q)$ from \eqref{eq:exactFZ}\\
\midrule
$2$ & $120$ & $880$\\
$4$ & $2988$ & $48252$\\
$6$ & $21360$ & $606360$\\
\bottomrule
\end{tabular}
\end{table}

For the bulk renormalization we also need the homogeneous cell anchored at
$v_1$.  Cut off the exceptional root side, require minimal top coordinate one,
and observe that a cycle based on $a_1$ must return from $B_1$ by one vertical
excursion before taking $a_1$ again.  Both boundary indices are now one, so the
boundary factor is $uV$.

\begin{corollary}[Anchored cell zeta]\label{cor:cellzeta}
The Euler product over primitive homogeneous-tail cycles with minimal top
coordinate one is
\begin{equation}\label{eq:cellzeta}
 Z_{\rm cell}(u)=\frac1{1-uV(u)x(u)}.
\end{equation}
It is algebraic, and
\begin{equation}\label{eq:cellfirst}
 \log Z_{\rm cell}(u)=q(q-1)u^4+O(u^6).
\end{equation}
\end{corollary}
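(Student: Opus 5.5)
The plan is to prove \cref{cor:cellzeta} exactly as \cref{thm:algebraiclocal} was proved, with the root cell replaced by the homogeneous boundary at $v_1$.

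\emph{Observation set and Euler product.} Take the one-state observation set $E=\{a_1\}$ in the truncated tail, namely the edge-indexed graph obtained by deleting $v_0$, its vertical ray and $a_0$. A primitive homogeneous-tail cycle with minimal top coordinate one has the following properties.
\begin{itemize}[leftmargin=*]
\item It visits $v_1$.
\item It never uses $\bar a_0$.
\item It must leave $v_1$ to the right at least once; a purely vertical cycle at $v_1$ is not positive, because re-entering $b_{1,0}$ has multiplicity $i(b_{1,0})-1=0$.
\end{itemize}
Hence such a cycle meets $a_1$. Conversely, every positive cycle of the truncated tail through $a_1$ has minimal top coordinate one. \cref{thm:localeuler}, applied to the truncated transition matrix, then identifies $Z_{\rm cell}$ with $(1-\cF_{a_1}(u))^{-1}$.

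\emph{First-return series.} A first return from $a_1$ begins at $v_2=A_2$-arrival and must first reach $B_1=\bar a_1$. By translation invariance of the stationary cells for $n\ge2$, this first passage has series $x(u)$, with the same stationary system \eqref{eq:xy-system}. From $B_1$, the left exit $\bar a_0$ is absent, and the only continuation that does not revisit the right half before returning to $a_1$ is:
\begin{enumerate}[label=(\roman*),leftmargin=*]
\item enter the vertical ray with index $1$;
\item make one excursion, with series $V$;
\item exit to $a_1$ with index $i(a_1)=1$.
\end{enumerate}
The other options are excluded. A direct turn $\bar a_1\to a_1$ has weight $i(a_1)-1=0$. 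Re-entry into the ray after an excursion has weight $0$. Going right again would pass through $a_1$, but that is exactly the return event.

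This gives a boundary factor $uV(u)$ in place of $K(u)$. The Markov concatenation then gives $\cF_{a_1}=uVx$, hence \eqref{eq:cellzeta}. Algebraicity follows because $x$ satisfies the quadratic \eqref{eq:xquadratic} over $\mathbb Q(q,u)$ and $V$ is rational.

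\emph{Expansion.} From \eqref{eq:scatter1}--\eqref{eq:scatter2} one has $\alpha=(q-1)u+O(u^3)$, so $x=(q-1)u+O(u^3)$. Also $V=qu^2+O(u^4)$. Therefore $uVx=q(q-1)u^4+O(u^6)$, and
\[
\log Z_{\rm cell}=-\log(1-uVx)=q(q-1)u^4+O(u^6).
\]
This matches the cycle $C_1$ of \cref{cor:combdiv}, which is a useful check.

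The main obstacle is the bookkeeping at $B_1$: verifying that no other boundary channel survives. In particular, one must check that "minimal top coordinate one" exactly excludes the $\bar a_0$ channel and that the zero multiplicities kill the remaining alternatives. I would confirm this by comparing low-order coefficients against the recursion \eqref{eq:rec-f}--\eqref{eq:rec-p} run on the truncated tail.
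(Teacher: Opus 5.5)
Your proposal is correct and follows essentially the same route as the paper. The paper also treats $Z_{\rm cell}$ as the one-state local zeta at $a_1$ in the cut tail, using the stationary first passage $x$ from $A_2$ to $B_1$ and the observation that both boundary indices at $B_1$ are one, so that the only return is a single vertical excursion with boundary factor $uV$ in place of $K$; your low-order check $uVx=q(q-1)u^4+O(u^6)$, matching $C_1$, is exactly what \eqref{eq:cellfirst} records.
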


Here and below, the \emph{cut homogeneous tail} is the edge-state system
formed from the uniform cells with top coordinate $n\ge1$, after deleting the
exceptional root cell and every transition across its left boundary.  We use
the recurrent parity component containing $a_1$.  Its one-step based-loop
series at $a_1$ is $Z_{\rm cell}(u)$; after squaring the shift, the corresponding
variable is $v=u^2$.

\subsection{Exact pressure at infinity}
\label{sec:exact-pressure-infinity}

The anchored cell determinant also measures the exponential complexity seen
after every fixed finite core has been discarded.  This is the point at which
the branch singularity of the stationary tail acquires a dynamical meaning.
Put
\[
 u_{\mathrm{tail}}=\frac1{2\sqrt q},
 \qquad v_{\mathrm{tail}}=u_{\mathrm{tail}}^2=\frac1{4q},
 \qquad
 \widehat Z_{\rm cell}(v)=Z_{\rm cell}(\sqrt v).
\]
This is independent of the square-root branch because $Z_{\rm cell}$ is even.

\begin{proposition}[Pressure of the homogeneous tail]
\label{prop:homogeneous-tail-pressure}
The Taylor series of $Z_{\rm cell}(u)$ has exact radius
$u_{\mathrm{tail}}$.  Equivalently, the based-loop series
$\widehat Z_{\rm cell}(v)$ for the squared homogeneous tail has radius
$v_{\mathrm{tail}}$.  Hence the Gurevich pressure of the two-step
multiplicity potential on the cut homogeneous tail is
\begin{equation}\label{eq:tail-pressure}
 P_{\rm tail}=\log(4q).
\end{equation}
\end{proposition}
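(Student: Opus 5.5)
The radius will be read off the closed form $Z_{\rm cell}=(1-uVx)^{-1}$ from \cref{cor:cellzeta}, and the pressure then follows from the usual Gurevich/loop-series correspondence. Put $t=u^2$. By \eqref{eq:vertical}, $uV(u)=qut/(1-(q+1)t)$. From \eqref{eq:xclosed}, \eqref{eq:Delta-simplified} and \eqref{eq:discriminant-factorization}, the apparent pole of $V$ at $t=(q+1)^{-1}$ cancels in $x$. I would check that it also cancels in the product $uVx$, or at least lies beyond $t=1/(4q)$; this holds because $1/(4q)<1/(q+1)$ for $q\ge2$. After clearing denominators, $x$ becomes a rational function of $t$ and $u$ times an expression involving $\sqrt{1-4qt}$, and the factor $(1-t)$ is harmless for $t<1$. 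So the only candidate singularities of $uVx$ in $|t|\le 1/(4q)$ come from the square-root branch point at $t=1/(4q)$, i.e.\ $|u|=u_{\rm tail}$.

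The first step is to show that $1-uVx$ has no zero in $0\le u\le u_{\rm tail}$. The series $uVx$ has nonnegative coefficients, so by monotonicity it suffices to evaluate it at $u=u_{\rm tail}$, where the square root vanishes. There $x=2\alpha/\Delta$, and everything is an explicit rational function of $q$. I expect $uVx<1$ at that point, with value roughly of order $q^{-1}$ times bounded factors, though this needs checking. Nonnegativity of the coefficients, together with the triangle inequality, then gives $|uVx|<1$ on the whole closed disc $|u|\le u_{\rm tail}$. Hence $Z_{\rm cell}$ is analytic in the open disc of radius $u_{\rm tail}$.

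The second step is to show the radius is exactly $u_{\rm tail}$. Since $\sqrt{1-4qt}$ genuinely enters $x$ (the coefficient of the square root is nonzero, because $\alpha\neq0$ there), $x$ has a square-root branch point at $t=1/(4q)$. Then $Z_{\rm cell}=1/(1-uVx)$ inherits it, because $1-uVx\ne0$ there and $uV\ne0$. Pringsheim's theorem, or the explicit singular expansion, gives the radius. Evenness yields the equivalent statement for $\widehat Z_{\rm cell}(v)$ with radius $v_{\rm tail}=1/(4q)$.

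The final step converts this to pressure. On the squared homogeneous tail, the based periodic sums at $a_1$, weighted by $e^{S_n\phi_q}$, are exactly the coefficients $N(n)$ of $v\frac{d}{dv}\log\widehat Z_{\rm cell}(v)$. This uses \eqref{eq:weighted-periodic-potential} and the local Euler product \cref{thm:localeuler}, applied to the cut tail with $E=\{a_1\}$. Because these coefficients are nonnegative, $\limsup \frac1n\log N(n)$ equals minus the log of the radius of the log-derivative series. That radius agrees with the radius of $\widehat Z_{\rm cell}$, since $\widehat Z_{\rm cell}$ is zero-free and pole-free inside the disc and singular on its boundary. This gives $P_{\rm tail}=\log(4q)$. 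The main obstacle is the first step: the explicit check that $uVx<1$ at the branch point uniformly in even $q\ge2$, so that no pole of $Z_{\rm cell}$ precedes the branch point. I would attempt it by direct simplification to a rational expression in $q$ and compare numerator and denominator.
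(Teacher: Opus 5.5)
The radius part of your proposal follows the paper's proof step for step: the closed form $Z_{\rm cell}=(1-uVx)^{-1}$, the discriminant factorization placing the first branch point at $t=1/(4q)$ with the pole of $V$ farther out, monotonicity from nonnegative coefficients, evaluation at the branch point, and inheritance of the square-root singularity because $1-uVx\neq0$ there. The check you call the main obstacle is a one-line substitution, but your heuristic for it is off. At $t=1/(4q)$ one finds $V=q/(3q-1)$, $\Delta=(2q-1)/(3q-1)$ and $\alpha=u(2q-1)^2/(3q-1)$. Hence $x=2\alpha/\Delta=2u(2q-1)=(2q-1)/\sqrt q$, and
\[
 u_{\rm tail}V(u_{\rm tail})x(u_{\rm tail})=\frac{2q-1}{2(3q-1)}<1 .
\]
This tends to $1/3$ as $q\to\infty$; it is not of order $q^{-1}$.

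The real error is in your last step. The Gurevich sums $c_n=\sum_{x\in\operatorname{Per}_{a_1}(n)}e^{S_n\phi_q(x)}$ count only points with $x_0=a_1$. They are therefore the based entries of $B^{2n}$ at $a_1$ for the cut tail, and by \cref{thm:schur} with $E=\{a_1\}$ they are the Taylor coefficients of $\widehat Z_{\rm cell}(v)$ itself. They are not the coefficients $N(n)$ of $v\frac{d}{dv}\log\widehat Z_{\rm cell}(v)$. By \eqref{eq:NE-orbit-meaning}, $N(n)$ counts every periodic point on an orbit meeting $a_1$, and the paper explicitly warns that this is not the based count. The two already differ at $n=2$: the anchored cycle $C_1$ gives $N(2)=2q(q-1)$, whereas $[v^2]\widehat Z_{\rm cell}(v)=c_2=q(q-1)$.

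Your value $\log(4q)$ is still correct. Each point counted by $N(n)$ is carried by some $\sigma_2^j$, $0\le j<n$, into $\operatorname{Per}_{a_1}(n)$ with the same weight, so $c_n\le N(n)\le n\,c_n$ and the two sequences have the same exponential growth rate. But this comparison is missing, and the identification as you state it is false. The simpler fix, which is the paper's route, is to apply Cauchy--Hadamard directly to the nonnegative based-loop series $\widehat Z_{\rm cell}$, whose radius $v_{\rm tail}$ you have already established. That removes the need for the detour through the logarithmic derivative and the argument that it has the same radius.
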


\begin{proof}
By the one-state local resolvent identity,
$Z_{\rm cell}(u)$ is the based-loop Green series of the cut homogeneous tail.
The discriminant factorization
\eqref{eq:discriminant-factorization} shows that the first possible
singularity of $x$ on the positive axis is
$u=u_{\mathrm{tail}}$.  The pole of $V$ lies farther away.  At this point,
direct substitution gives
\begin{equation}\label{eq:tail-boundary-values}
 V(u_{\mathrm{tail}})=\frac q{3q-1},
 \qquad
 x(u_{\mathrm{tail}})=\frac{2q-1}{\sqrt q},
 \qquad
 u_{\mathrm{tail}}V(u_{\mathrm{tail}})x(u_{\mathrm{tail}})
 =\frac{2q-1}{2(3q-1)}<1.
\end{equation}
Thus the denominator $1-uVx$ in \eqref{eq:cellzeta} does not vanish at the
first branch point.  The square-root term in \eqref{eq:xclosed} has nonzero
coefficient there, so the branch point is genuine and is inherited by
$Z_{\rm cell}$.

For $0\le u<u_{\mathrm{tail}}$, the series $uV(u)x(u)$ has nonnegative
coefficients and is strictly smaller than its value at
$u_{\mathrm{tail}}$, which is less than one by
\eqref{eq:tail-boundary-values}.  Hence $1-uVx$ has no zero in that interval.
For complex $u$ with $|u|<u_{\mathrm{tail}}$, the same conclusion follows
from
$|uV(u)x(u)|\le |u|V(|u|)x(|u|)<1$.  Therefore no singularity occurs in the
open disk, and the exact radius is $u_{\mathrm{tail}}$.  Passing to
$v=u^2$ gives radius $R=(4q)^{-1}$.  For an irreducible based-loop series the
Cauchy--Hadamard formula identifies its Gurevich pressure with $-\log R$;
hence \eqref{eq:tail-pressure} follows.  Translation
conjugates the two parity copies of the squared homogeneous tail, so this
pressure is independent of the parity copy used at the root.
\end{proof}

We next connect this cut-tail pressure with the pressure at infinity from
\eqref{eq:pressure-at-infinity-definition}.  For $N\ge1$, let $K_N$ consist
of those states in $\cC_{a_0}$ represented by the horizontal edges
$a_j,\bar a_j$ and cusp-mouth edges $b_{j,0},\bar b_{j,0}$ with $j<N$.
Removing $K_N$ leaves one recurrent component: the homogeneous tail beginning
at the $N$th cell.  The remaining components lie strictly inside severed
vertical rays and contain no positive periodic path.  All components and
boundaries in this argument are taken inside the communicating class
$\cC_{a_0}$; the other parity component of the full squared edge shift is not
part of the shift under consideration.
Here and below,
$\Sigma^{(2),+}_{a_0}\setminus K_N$ denotes the one-sided Markov shift
obtained by deleting the states in $K_N$ and retaining only transitions of
$B^{(2)}_{a_0}$ between the remaining states.

\begin{lemma}[Finite-core escape]
\label{lem:finite-core-escape}
For the locally finite squared root component and the bounded locally constant
potential $\phi_q$,
\begin{equation}\label{eq:finite-core-pressure-limit}
 P_\infty(\phi_q)
 =\lim_{N\to\infty}
 P_G\!\left(\phi_q\bigm|
 \Sigma^{(2),+}_{a_0}\setminus K_N\right).
\end{equation}
On the right, the pressure of a nontransitive restriction is the supremum of
the pressures of its recurrent components.
\end{lemma}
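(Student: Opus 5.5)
We prove the two inequalities separately, writing $P_N$ for the right-hand Gurevich pressure. First note that $P_N$ is nonincreasing in $N$: a periodic orbit of $\Sigma^{(2),+}_{a_0}\setminus K_{N+1}$ is also one of $\Sigma^{(2),+}_{a_0}\setminus K_N$. Since $0\le\phi_q\le 2\log(q+1)$, every $P_N$ lies in $[0,2\log(q+1)]$. Hence the limit on the right of \eqref{eq:finite-core-pressure-limit} exists.

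\emph{Upper bound $P_\infty\le\lim P_N$.} Fix $N$ and take $F=K_N$, a finite subset of $\cC_{a_0}$, with the base state $a$ chosen in the surviving tail. The natural first attempt, taking $M$ huge, does not directly bound the sum by $P_N$, because orbits in $\operatorname{Per}_a(n;F,M)$ may visit $K_N$ up to $n/M$ times. So we use $F=K_{N'}$ with $N'\gg N$ and argue as follows.
\begin{itemize}
\item Decompose a periodic orbit into maximal excursions in the complement of $K_{N'}$, separated by visits to $K_{N'}$.
\item Because $K_{N'}$ is a finite set of states, each visit contributes at most a bounded factor, namely $(q+1)^2$ times a bounded number of choices.
\item There are at most $n/M$ visits, and they can be placed in at most $\binom{n}{\le n/M}=e^{o_M(1)n}$ ways.
\item Each excursion is a path in the complement of $K_{N'}$. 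Its weight is controlled by the Green series of the tail starting at cell $N'$, which converges with exponential rate $P_{N'}$. Uniformly in the endpoints, this follows because the tail is translation-stationary, so endpoint-to-endpoint Green functions across an interface are uniform multiples of based-loop series.
\end{itemize}
Combining these gives $P_\infty(\phi_q;a,K_{N'},M)\le P_{N'}+\epsilon(M)$ with $\epsilon(M)\to0$. Taking the infimum over $M$ and $N'$ yields the bound.

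\emph{Lower bound $P_\infty\ge\lim P_N$.} Fix a finite $F$ and $M$. Choose $N$ so large that $F\subset K_N$; this is possible because $F$ is finite and the $K_N$ together with the ray interiors exhaust $\cC_{a_0}$. Ray-interior states in $F$ can be absorbed by a slightly larger truncation: only ray states with bounded depth matter, and we enlarge $K_N$ to include them. This only lowers the pressure, and by stationarity it still converges to the same tail pressure. Periodic orbits of the recurrent tail component beyond $K_N$ visit $F$ zero times, but they are not based at $a$. To fix this, pick a state $c$ in the tail and fixed connecting paths $a\to c$ and $c\to a$ of total length $L$ and positive weight. Concatenating with loops at $c$ of length $n-L$ in the tail gives orbits in $\operatorname{Per}_a(n;F,M)$ for $n\ge ML\cdot\#F$, since they visit $F$ only within the $L$ connecting steps. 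Their weighted count is at least $c_0\,e^{(P_N-\epsilon)(n-L)}$ along a subsequence; the tail is aperiodic for $B^2$, so the subsequence can be realized. Hence $P_\infty(\phi_q;a,F,M)\ge P_N$, and taking infima gives the inequality.

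\emph{Main obstacle.} The hard step is the upper bound. We need the excursion Green functions in the complement of $K_{N'}$ to be bounded by $e^{(P_{N'}+\epsilon)m}$ uniformly over entry and exit states, even though there are infinitely many states (the vertical rays). We would try to handle this with the Schur reduction of \cref{prop:onecell-schur}. Vertical excursions are absorbed into $V(u)$, whose radius $(q+1)^{-1/2}$ exceeds the tail radius, so every excursion reduces to a horizontal two-channel path. That path is counted by the transfer data $(x,y)$, which have the same radius as $Z_{\rm cell}$ by \cref{prop:homogeneous-tail-pressure}. Uniformity then follows from stationarity in the top coordinate.
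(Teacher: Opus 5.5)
Your argument is essentially the paper's: for the upper bound you decompose an escaping periodic orbit at its at most $n/M$ visits to the finite core and bound the outside excursions by $C_\delta e^{(P_N+\delta)m}$ times a subexponential combinatorial factor, and for the lower bound you splice tail loops lying beyond $F$ into base-point loops via fixed connecting paths (getting the limit from monotonicity of $P_N$ rather than from translation invariance is a harmless variant). Two points to tighten. First, the entry and exit states of excursions from a finite core form a finite set by local finiteness, so the uniform excursion bound follows from injecting $s\to t$ paths into based loops at $s$ via a fixed return path; no Schur reduction or stationarity is needed. Second, excursions into the severed vertical rays below cell $N'$ are not tail excursions, so your bullet's appeal to the tail Green series does not cover them and they must be bounded separately (their two-step growth is $q+1<4q$, from the coefficients of $V$); this is exactly why the transient parts do not appear on the right-hand side.
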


\begin{proof}
We give the excursion estimate specialized to the present locally finite
graph.  Let $P_N$ denote the pressure of the unique recurrent component of
the complement of $K_N$.  For every $\delta>0$, the total weight of length
$m$ paths between any two states in the finite boundary of that component is
at most
\begin{equation}\label{eq:open-tail-bound}
 C_\delta e^{(P_N+\delta)m}.
\end{equation}
To see this uniformly, for each ordered pair $(s,t)$ of boundary states choose
one positive path $\gamma_{t,s}$ from $t$ back to $s$.  Concatenation
$\pi\mapsto\pi\gamma_{t,s}$ injects the length-$m$ paths from $s$ to $t$ into
the based periodic paths at $s$.  The added lengths and the reciprocals of the
added weights range over finite sets because the boundary is finite.  The
definition of $P_N$ therefore bounds all sufficiently long nonzero periodic
partition sums by $e^{(P_N+\delta)m}$ up to one uniform multiplicative
constant; enlarging that constant handles the finitely many short lengths.
A severed vertical-ray excursion has two-step growth at most $q+1<4q$, by the
explicit weights in \eqref{eq:excursionweight}, so after one further increase
of $C_\delta$ the same estimate applies to every component of the complement.

A periodic path counted in \eqref{eq:escaping-periodic-pressure} decomposes at
its visits to $K_N$ into at most $s\le n/M+1$ outside excursions and joining
pieces in $K_N$.  If $k\le n/M$ is the total time spent in $K_N$, finiteness
of $K_N$ and its boundary bounds the total weight of all joining pieces with
specified endpoints by $D^{k+s}$ for a constant $D$.  The factors
$C_\delta^sD^{k+s}$, together with the choices of excursion lengths and visit
positions, contribute, for a constant $C_1>0$ independent of $n$ and $M$, at most
\[
 D^{n/M}\sum_{s\le n/M+1}\binom ns C_1^s
 =\exp\!\bigl(n\,o_M(1)\bigr),
 \qquad o_M(1)=O\!\left(\frac{1+\log M}{M}\right)
 \longrightarrow0\quad(M\to\infty).
\]
Combining this with \eqref{eq:open-tail-bound} gives
\[
 P_\infty(\phi_q;a_0,K_N,M)
 \le P_N+\delta+o_M(1).
\]
First let $M\to\infty$ and then $\delta\downarrow0$.  This proves the upper
bound in \eqref{eq:finite-core-pressure-limit}.

Conversely, fix a finite state set $F$.  Choose a homogeneous tail state $e$
beyond every horizontal cell meeting $F$, in the parity component of $a_0$.
There are fixed positive paths from $a_0$ to $e$ and from $e$ back to $a_0$;
write $L$ and $c>0$ for their total length and weight.  Appending these paths
to every length-$m$ loop based at $e$ and contained in the cut homogeneous
tail component produces distinct root-based loops of length $m+L$ and total
weight at least $c$ times the cut-tail partition sum.  The middle loop avoids
$F$, so the resulting root-based loop visits $F$ at most $L$ times.  For each
fixed $M$ it therefore satisfies the escape condition once $m\ge ML$.
Taking the limsup along lengths realizing the cut-tail pressure proves
$P_\infty(\phi_q)\ge P_N$.  Translation conjugates the two parity copies of
the homogeneous tail, so $P_N$ is independent of $N$ and the asserted limit
follows.
\end{proof}

\begin{theorem}[Exact pressure gap at infinity]
\label{thm:exact-pressure-gap}
For the two-step multiplicity potential $\phi_q$,
\begin{equation}\label{eq:exact-pressure-gap}
 P_\infty(\phi_q)=\log(4q)
 <2\log(q+1)=P_G(\phi_q).
\end{equation}
Consequently $\phi_q$ is strongly positive recurrent.
\end{theorem}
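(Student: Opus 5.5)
The proof assembles three earlier results: \cref{lem:finite-core-escape}, \cref{prop:homogeneous-tail-pressure} and \cref{thm:exact-dominant-pole}. The pressure-gap criterion \cite[Theorem~8.2]{RuhrSarig2022} then turns the strict inequality into strong positive recurrence.

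\emph{Pressure at infinity.} By \cref{lem:finite-core-escape}, $P_\infty(\phi_q)$ is the limit as $N\to\infty$ of the Gurevich pressures of $\Sigma^{(2),+}_{a_0}\setminus K_N$. Each such complement has exactly one recurrent component, the homogeneous tail starting at the $N$th cell. Every other component lies inside a severed vertical ray and carries no positive periodic path. Translation by $N-1$ cells conjugates this tail to the cut homogeneous tail anchored at $v_1$. Its weights are exactly the two-step multiplicities, because the uniform indices \eqref{eq:horizontalindices}--\eqref{eq:verticalindices} do not depend on $n$. So each term of the limit equals $P_{\rm tail}=\log(4q)$ from \cref{prop:homogeneous-tail-pressure}. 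One point needs care: the parity copy of the tail that appears inside $\cC_{a_0}$ may alternate with $N$. This is handled by the remark, already used in that proposition, that translation conjugates the two parity copies. Hence $P_\infty(\phi_q)=\log(4q)$.

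\emph{Gurevich pressure.} I compute $P_G$ at the state $a=a_0$. By \eqref{eq:weighted-periodic-potential}, $\sum_{x\in\operatorname{Per}_{a_0}(n)}e^{S_n\phi_q(x)}=(B^{2n})_{a_0,a_0}$. This is the total weight of closed two-step paths at $a_0$, with no restriction on intermediate visits. All such paths stay inside $\cC_{a_0}$, since every state on a closed path through $a_0$ communicates with $a_0$ at even times. By \cref{thm:schur} with $E=\{a_0\}$, the generating function of these diagonal entries is $(1-\cF_{a_0}(u))^{-1}=Z_{a_0}(u)$. By \cref{thm:exact-dominant-pole} this series has radius $(q+1)^{-1}$ in $u$, hence $(q+1)^{-2}$ in $v=u^2$. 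Cauchy--Hadamard then gives $\limsup_n\frac1n\log(B^{2n})_{a_0,a_0}=2\log(q+1)$, so $P_G(\phi_q)=2\log(q+1)$. As a consistency check, the row-sum bound $\sum_f b(e,f)=q+1$ gives the same value as an upper bound.

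\emph{Strict inequality.} It remains to show $4q<(q+1)^2$, which is equivalent to $(q-1)^2>0$ and holds for all $q\ge2$. The potential $\phi_q$ is bounded and locally constant, so it is weakly H\"older, and it has finite pressure. The cited criterion therefore gives strong positive recurrence.

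The main obstacle is the first step. One must check that the escape limit really reduces to the cut-tail pressure, uniformly in $N$ and within the correct parity component. Once the lemma and the translation conjugacy are accepted, the remaining steps are bookkeeping.
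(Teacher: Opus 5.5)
Your proposal is correct and follows the paper's own proof. It obtains $P_\infty(\phi_q)=\log(4q)$ from \cref{lem:finite-core-escape} together with \cref{prop:homogeneous-tail-pressure}, and $P_G(\phi_q)=2\log(q+1)$ from the radius $(q+1)^{-1}$ in \cref{thm:exact-dominant-pole}; it then concludes with $(q-1)^2>0$ and the R\"uhr--Sarig gap criterion. Two small additions are worth noting. Your identification, via \cref{thm:schur} with $E=\{a_0\}$, of $\widehat Z_{a_0}(v)$ with the based-loop series $\sum_n(B^{2n})_{a_0,a_0}v^n$, including the check that these loops stay in $\cC_{a_0}$, makes explicit a step the paper leaves implicit; and your parity concern can be sidestepped by letting $N\to\infty$ through even values, where the tail component inside $\cC_{a_0}$ is exactly a translate of the component of $a_1$ used in \cref{prop:homogeneous-tail-pressure}.
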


\begin{proof}
The pressure at infinity is $\log(4q)$ by
\cref{lem:finite-core-escape,prop:homogeneous-tail-pressure}.  On the other
hand, \cref{thm:exact-dominant-pole} shows that the based-loop series for the
squared root component has radius $(q+1)^{-2}$, and hence
$P_G(\phi_q)=2\log(q+1)$.  The strict inequality is equivalent to
$4q<(q+1)^2$, or $(q-1)^2>0$.  Since $q\ge2$, the pressure gap is strict.
The equivalence between this gap and strong positive recurrence applies
because $\phi_q$ is bounded, locally constant and has finite pressure
\cite{RuhrSarig2022,Velozo2026}.
\end{proof}

\subsection{A finite-boundary extension}
\label{sec:stationary-general}

The same argument applies when Schur elimination leaves finite boundary data,
even if a cusp contains countably many states:
\[
 \begin{aligned}
 \text{internal cell states}
 &\longrightarrow \text{boundary scattering}\\
 &\longrightarrow \text{stationary first passages}
 \longrightarrow \text{core return matrix }\cF_E.
 \end{aligned}
\]

\begin{definition}\label{def:stationarytail}
An edge-state system has an \emph{eventually stationary tail with finite
boundary data and algebraic scattering} if it is the union of a finite core,
finitely many exceptional internal components, and cells $\mathfrak C_n$,
$n\ge N$, for some $N\in\mathbb N_0$, arranged along a ray, with the following
properties.  The number of boundary channels is a fixed integer $d\ge1$.
\begin{enumerate}[label=(\roman*)]
\item Every interface is finite.  Each stationary cell has $d$ left-arrival
and $d$ right-arrival boundary states.
\item Translation identifies $\mathfrak C_n$ with $\mathfrak C_{n+1}$, including its boundary
states, internal transition system and all weights.
\item Intercell transitions are nearest-neighbour: leaving a cell through its
left interface enters the adjacent cell on the left, leaving through its right
interface enters the adjacent cell on the right, and no transition bypasses
an intervening interface or skips a cell.  Consequently every first passage
from a cell to its left neighbour either exits immediately to the left or
first exits to the right and returns through the translated half-tail.
\item A cell may have finitely or countably many internal states.  Its formal
internal resolvent is required to be coefficientwise well defined between
boundary states: if $T_{\rm int}$ is the internal transition block and
$\mathsf E_{\rm in},\mathsf E_{\rm out}$ are the boundary coupling blocks,
every coefficient of
$u^2\mathsf E_{\rm in}(I-uT_{\rm int})^{-1}\mathsf E_{\rm out}$ must be
finite.  After these internal states are eliminated, the four boundary
scattering blocks satisfy
\[
 \mathsf A(u),\mathsf B(u),\mathsf C(u),\mathsf D(u)
 \in uM_d(\mathbb K\langle u\rangle).
\]
\item Each exceptional internal component has a finite interface and, after
the same coefficientwise Schur elimination, has algebraic effective
scattering entries in $\mathbb K\langle u\rangle$.
\end{enumerate}
Here $\mathbb K$ is any characteristic-zero coefficient field containing the
transition weights,
\[
 \mathbb K\langle u\rangle
 =\{f(u)\in\mathbb K[[u]]:f(u)\text{ is algebraic over }\mathbb K(u)\}
\]
is the ring of algebraic formal power series, and $M_d(R)$ denotes the
$d\times d$ matrices over a ring $R$.  The prefactor $u$ says that every
boundary passage has positive length.
\end{definition}

Label the $d$ channels arriving at the $n$th cell from the left by
$L_n^1,\ldots,L_n^d$, and those arriving from the right by
$R_n^1,\ldots,R_n^d$.  Write $L_n$ and $R_n$ for the corresponding
$d$-dimensional channel spaces.  With rows representing initial channels and
columns representing final channels, the four effective blocks have the
types
\[
 \mathsf A:L_n\longrightarrow R_{n-1},\qquad
 \mathsf B:L_n\longrightarrow L_{n+1},\qquad
 \mathsf C:R_n\longrightarrow R_{n-1},\qquad
 \mathsf D:R_n\longrightarrow L_{n+1}.
\]
Thus $\mathsf A,\mathsf B$ give the left and right exits from $L_n$, while
$\mathsf C,\mathsf D$ give those from $R_n$.  A finite internal subsystem
gives rational blocks; the vertical cusp of the one-sided comb gives the
rational resolvent $V(u)$.

\paragraph{The matrix first-passage equations.}
We use the same row-to-column convention as for the transition matrix $B$:
the row records the initial channel, the column records the final channel,
and matrix products list successive path segments from left to right.  Define
$X(u),Y(u)\in M_d(\mathbb K[[u]])$ entrywise by
\begin{align*}
 X_{ij}(u)
 &=\sum_{\substack{\pi:L_n^i\leadsto R_{n-1}^j\\
                    \pi\text{ first reaches }R_{n-1}\text{ at its end}}}
   W(\pi)u^{\ell(\pi)},\\
 Y_{ij}(u)
 &=\sum_{\substack{\pi:R_n^i\leadsto R_{n-1}^j\\
                    \pi\text{ first reaches }R_{n-1}\text{ at its end}}}
   W(\pi)u^{\ell(\pi)}.
\end{align*}
Here $W(\pi)$ is the product of transition multiplicities along $\pi$.
Stationarity makes these matrices independent of $n$.  A first passage exits
immediately to the left or first exits right and returns through the translated
half-tail.  The nearest-neighbour condition in
\cref{def:stationarytail} makes this decomposition exhaustive and unique.
In chronological order it gives
\begin{equation}\label{eq:matrix-riccati}
 X=\mathsf A+\mathsf BXY,
 \qquad
 Y=\mathsf C+\mathsf DXY.
\end{equation}
For the one-sided comb, $d=1$, $L_n=\operatorname{span}\{A_n\}$,
$R_n=\operatorname{span}\{B_n\}$ and
$(\mathsf A,\mathsf B,\mathsf C,\mathsf D)
=(\alpha,\beta,\gamma,\delta)$; thus \eqref{eq:matrix-riccati} reduces
exactly to the scalar system \eqref{eq:xy-system}.

\Needspace{11\baselineskip}
\begin{theorem}[Algebraicity of stationary-tail local determinants]
\label{thm:stationary-algebraicity}
Let an edge-state system have an eventually stationary tail with finite
boundary data and algebraic scattering, and let $E$ be a finite observation
set in the core.  Then every entry of its first-return matrix $\cF_E(u)$ is
algebraic over $\mathbb K(u)$.  Consequently
\[
 Z_E(u)=\det(I_E-\cF_E(u))^{-1}
\]
is algebraic over $\mathbb K(u)$.  The same conclusion holds for an eventually
$p$-periodic tail, where $p\ge1$.
\end{theorem}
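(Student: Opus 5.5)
The strategy is to express every entry of $\cF_E(u)$ as a polynomial in finitely many algebraic power series, and then to use that $\mathbb K\langle u\rangle$ is a ring. It is also closed under inverses of series with constant term one and under formal substitution of algebraic series into algebraic relations. The first step handles the matrix first-passage equations \eqref{eq:matrix-riccati}. Since $\mathsf A,\mathsf B,\mathsf C,\mathsf D\in uM_d(\mathbb K\langle u\rangle)$, the system $X=\mathsf A+\mathsf BXY$, $Y=\mathsf C+\mathsf DXY$ is a polynomial system of $2d^2$ equations in the $2d^2$ unknown entries of $X$ and $Y$, with coefficients in the field $L=\mathbb K(u)(\text{entries of }\mathsf A,\ldots,\mathsf D)$. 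This field is a finite algebraic extension of $\mathbb K(u)$.

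The key observation is that the Jacobian of $\Phi(X,Y)=(X-\mathsf A-\mathsf BXY,\;Y-\mathsf C-\mathsf DXY)$ at the formal solution is the identity plus terms divisible by $u$, because $\mathsf B,\mathsf D$ carry the prefactor $u$. The Jacobian determinant is therefore a unit in $\mathbb K[[u]]$. Two consequences follow. By the formal implicit function theorem, the solution with $X(0)=Y(0)=0$ is unique, so it coincides with the path-defined series, whose existence and uniqueness are supplied by condition (iii) of \cref{def:stationarytail}. Moreover, this solution is an isolated point of the variety $\{\Phi=0\}$ over $\overline{L((u))}$.

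I then invoke the standard fact that an isolated, nondegenerate solution of a polynomial system over a field $L$ has algebraic coordinates over $L$. One proof takes the zero-dimensional component containing the point and applies elimination theory or Gröbner bases. Alternatively, the étale (nonsingular Jacobian) condition shows that the $L$-algebra generated by the coordinates has transcendence degree zero. Hence the entries of $X$ and $Y$ are algebraic over $L$, and therefore over $\mathbb K(u)$. I expect this step to be the main obstacle. It requires a correct statement of the nondegenerate-isolated-point algebraicity lemma, for which I would cite a version of Artin approximation, or the Jacobian criterion for dimension zero. It also requires checking that the formal Jacobian is genuinely invertible rather than merely having nonzero constant term in some entries.

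The second step attaches the core. The system is a finite core together with finitely many exceptional components, joined to the tail at cell $N$ through a finite interface. Apply \cref{thm:schur} to the finite set $E'$ consisting of $E$, the interface channels $L_N,R_{N-1}$, and the interfaces of the exceptional components. Eliminate the tail beyond the interface, which by the first-passage decomposition contributes a block built from $X$, $Y$, $\mathsf B$, $\mathsf D$. Eliminate each exceptional component, which contributes its algebraic effective scattering by condition (v). What remains is a finite matrix $I_{E'}-\mathcal G(u)$ whose entries are polynomials in algebraic series, so every entry is algebraic.

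A second Schur complement of $E'$ onto $E$ then gives
\[
\cF_E=\mathcal G_{EE}+\mathcal G_{E,E'\setminus E}\bigl(I-\mathcal G_{E'\setminus E,E'\setminus E}\bigr)^{-1}\mathcal G_{E'\setminus E,E}.
\]
This is well defined formally because $\mathcal G=O(u)$, and its entries are obtained from algebraic series by ring operations and by inverting $\det(I-\mathcal G_{E'\setminus E,E'\setminus E})\in1+u\mathbb K\langle u\rangle$. Consequently $\det(I_E-\cF_E)$, and its inverse $Z_E$, are algebraic. This is the same pattern as $Z_{a_0}=(1-Kx)^{-1}$ in \cref{thm:algebraiclocal}.

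For an eventually $p$-periodic tail, group $p$ consecutive cells into a supercell. First eliminate the internal structure of the block coefficientwise, namely the $p-1$ internal interfaces, each with $d$ channels in each direction. This Schur elimination is finite-dimensional over the algebraic entries of the individual cells, so it yields new blocks $\mathsf A',\mathsf B',\mathsf C',\mathsf D'\in uM_d(\mathbb K\langle u\rangle)$. The supercell system is then stationary and nearest-neighbour, and the previous argument applies verbatim.
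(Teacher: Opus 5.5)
Your argument is correct, and its outer structure matches the paper's proof. That structure is: coefficientwise uniqueness of the branch through $(0,0)$, attachment of the core by finitely many Schur complements whose inverted matrices have the form $I+O(u)$, and grouping $p$ cells into a supercell. Your two-stage elimination through $E'$ is an explicit version of the paper's sentence about finitely many additions, products and Schur complements.

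The genuine difference is how you certify that $X$ and $Y$ are algebraic. The paper treats $\Phi$ as a system over the local ring $\mathbb K\langle u\rangle$ and notes that its Jacobian at $(u,X,Y)=(0,0,0)$ is the identity. It then invokes the Henselian property of $\mathbb K\langle u\rangle$, i.e.\ the algebraic implicit function theorem, to produce a solution in $uM_d(\mathbb K\langle u\rangle)$, which must be the formal one by uniqueness. You instead work over the field $L\subset\mathbb K((u))$ and use only that the Jacobian at the actual formal solution is $I+O(u)$, hence invertible.

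The lemma you flagged as the main obstacle has a short self-contained proof in characteristic zero. It is cleaner than the ``isolated point over $\overline{L((u))}$'' or Artin-approximation phrasing. Let $D$ be any $L$-derivation of the finitely generated field $L(X,Y)$. Differentiating $\Phi(X,Y)=0$ gives $J\cdot D(X,Y)=0$, so $D$ kills the generators and therefore vanishes. In characteristic zero $\dim\operatorname{Der}_L L(X,Y)$ equals the transcendence degree, so $L(X,Y)$ is algebraic over $L$, hence over $\mathbb K(u)$.

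Your route thus replaces the nontrivial theorem that algebraic power series form a Henselian ring by an elementary derivation count. The cost is that you need the formal solution in hand first, which you obtain correctly by coefficient comparison. The paper's route is a one-line citation that places the branch directly inside $\mathbb K\langle u\rangle$.
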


\begin{proof}
Because all four scattering blocks have zero constant term,
\eqref{eq:matrix-riccati} determines a unique solution
$(X,Y)\in uM_d(\mathbb K[[u]])^2$: after the coefficients below degree $m$
are known, comparison of coefficients determines the degree-$m$ terms.  This
gives a direct formal construction of the branch through $(0,0)$.

For algebraicity, set
\[
 \Phi(u,X,Y)=
 \bigl(X-\mathsf A-\mathsf BXY,\,
       Y-\mathsf C-\mathsf DXY\bigr).
\]
This is a finite polynomial system in the $2d^2$ entries of $(X,Y)$ with
coefficients in $\mathbb K\langle u\rangle$.  At
$(u,X,Y)=(0,0,0)$ its Jacobian with respect to $(X,Y)$ is the identity.
The algebraic power-series ring is Henselian; equivalently, the algebraic
implicit-function theorem applies (see, for example, the review and
references in \cite{Rond2018}).  It follows that the unique branch has
$X,Y\in M_d(\mathbb K\langle u\rangle)$, so every entry is algebraic over
$\mathbb K(u)$.

Attach the finite core and the finitely many exceptional components.  Their
effective scattering entries are rational or algebraic by definition.
Combining them with $X$ and $Y$ uses only finitely many additions, products
and Schur complements.  Every matrix that is inverted has identity constant
term, so its formal inverse exists and still has algebraic entries.  Hence
every entry of $\cF_E$ is algebraic.  Since
$\det(I_E-\cF_E(0))=1$, its reciprocal determinant is algebraic as well.  If the
tail is $p$-periodic, group $p$ consecutive cells into one supercell; finite
composition preserves both nearest-neighbour gluing and algebraic boundary
scattering, so the stationary argument applies to that supercell.
\end{proof}

\begin{remark}
The theorem is local: it does not claim that an arbitrary infinitely cusped
graph has a stationary algebraic-scattering tail, nor that its ordinary
global zeta exists.  It applies precisely when infinite cell interiors can be
compressed to finite algebraic boundary data.
\end{remark}

\section{Height regularization and the renormalized finite part}
\label{sec:regularized}

Here $Z(u,\rho)$ denotes the height-regularized family and $Z_{\rm ren}(u)$
its finite part after subtraction of the divergence as $\rho\uparrow1$.  The
construction proceeds in the order
\[
 T_\rho\ \longrightarrow\
 \text{root/cell orbit split}\ \longrightarrow\
 \frac{\mathscr P(u)}{1-\rho}+\text{finite part}\ \longrightarrow\
 Z_{\rm ren}(u).
\]
The last subsection compares this Abel procedure with a sharp height cutoff.

\subsection{Height damping and the Fredholm determinant}

Let
\[
 h(e)=d_{Y_q}(o(e),v_0).
\]
For a closed edge path $C=(e_0,\ldots,e_{m-1})$, define its integrated height
\begin{equation}\label{eq:integratedheight}
 H(C)=\sum_{j=0}^{m-1}h(e_j).
\end{equation}
This is invariant under cyclic rotation.  Let $M_\rho$ be the diagonal operator
on $\ell^2(\OrE(Y_q))$ given by
\[
 M_\rho\delta_e=\rho^{h(e)}\delta_e,
 \qquad0<\rho<1,
\]
where $(\delta_e)_{e\in\OrE(Y_q)}$ is the standard orthonormal basis, and set
\begin{equation}\label{eq:Trho}
 T_\rho=BM_\rho.
\end{equation}
Thus a closed matrix product around $C$ acquires the factor $\rho^{H(C)}$.

\begin{theorem}[Fredholm determinant]\label{thm:fredholm}
For $0<\rho<1$, the operator $T_\rho=BM_\rho$ is trace class, and
$Z(u,\rho):=\det(I-uT_\rho)^{-1}$ is meromorphic in $u\in\mathbb C$ with
entire reciprocal.  For $|u|$ sufficiently small,
\begin{align}
 \log Z(u,\rho)
 &=\sum_{m\ge1}\frac{u^m}{m}\Tr(T_\rho^m),\label{eq:traceexpansion}\\
 Z(u,\rho)
 &=\prod_{[C]\ \mathrm{primitive}}
 \left(1-W(C)\rho^{H(C)}u^{\ell(C)}\right)^{-1}.
\label{eq:regularizedeuler}
\end{align}
\end{theorem}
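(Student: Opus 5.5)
Since $B$ is a nonnegative matrix with row sums $q+1$ by \eqref{eq:degree}, its natural norm is the $\ell^\infty$ row-sum norm, but trace-class membership is an $\ell^2$ statement. I would bound the trace norm by a sum of rank-one norms. Write $T_\rho=\sum_{f}(B\delta_f)\rho^{h(f)}\langle\cdot,\delta_f\rangle$, where the sum runs over columns. Then $\|T_\rho\|_1\le\sum_f\rho^{h(f)}\|B\delta_f\|_2$. The column $B\delta_f$ has at most $q+2$ nonzero entries: the predecessors $e$ with $t(e)=o(f)$ number at most the degree of $o(f)$ in $Y_q$, which is at most $3$, and each entry is at most $q+1$. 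So $\|B\delta_f\|_2\le C_q$ uniformly. The number of oriented edges $f$ with $h(f)=m$ is at most $2(m+2)$, because there are $m+1$ vertices at distance $m$ and each has degree at most $3$. Hence $\|T_\rho\|_1\le C_q\sum_m 2(m+2)\rho^m<\infty$, and $T_\rho$ is trace class. The standard Fredholm theory (Simon's trace ideals) then shows that $u\mapsto\det(I-uT_\rho)$ is entire. In particular $Z(u,\rho)$ is meromorphic with entire reciprocal.

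For \eqref{eq:traceexpansion}, I would use the Plemelj--Smithies/Lidskii identity $\log\det(I-uT)=-\sum_m u^m\Tr(T^m)/m$ for $|u|\,\|T_\rho\|_1<1$. For a trace-class operator given by a matrix with absolutely summable diagonal, the trace equals the sum of the diagonal entries. So $\Tr(T_\rho^m)=\sum_e (T_\rho^m)_{e,e}=\sum_{C}W(C)\rho^{H(C)}$, summed over based closed paths of length $m$. Here I would check that the product of entries $b(e_j,e_{j+1})\rho^{h(e_{j+1})}$ around a closed path gives exactly $\rho^{H(C)}$ by cyclicity. Absolute convergence holds because all terms are nonnegative and $\Tr(T_\rho^m)\le\|T_\rho^m\|_1\le\|T_\rho\|_1^m$. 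Note that this finiteness coexists with $\Tr(B^4)=\infty$ from \cref{thm:obstruction}, because the damping $\rho^{H(C_n)}$ decays in $n$.

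For \eqref{eq:regularizedeuler}, I would regroup based closed paths of length $m$ into the powers $D^k$ of primitive classes $[D]$ with $k\ell(D)=m$. Each class contributes $\ell(D)$ rotations, with weight $(W(D)\rho^{H(D)})^k$. This gives
\[
\sum_m\frac{u^m}{m}\Tr(T_\rho^m)=\sum_{[D]}\sum_k\frac1k\bigl(W(D)\rho^{H(D)}u^{\ell(D)}\bigr)^k=-\sum_{[D]}\log\bigl(1-W(D)\rho^{H(D)}u^{\ell(D)}\bigr).
\]
This is the argument of \cref{thm:localeuler}, now applied to the full system. The rearrangement of a nonnegative double series is justified for $0<u<\|T_\rho\|_1^{-1}$ by Tonelli. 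It then extends to complex $|u|$ in the same range by dominated absolute convergence, which in turn gives normal convergence of the product.

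The main obstacle is the trace-class estimate itself, specifically making sure that a Hilbert–Schmidt-type bound is not confused with a trace-norm bound. The rank-one column decomposition above handles this cleanly. An alternative would be to factor $T_\rho=(BM_{\sqrt\rho})M_{\sqrt\rho}$ as a product of two Hilbert–Schmidt operators. A secondary point is identifying the operator trace with the diagonal sum, which holds for trace-class operators in any orthonormal basis.
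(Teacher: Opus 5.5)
Your proposal is correct and follows essentially the paper's route. You show $T_\rho$ is trace class from the summability of $\rho^{h(e)}$ over edge states together with a uniform bound on $B$: your column-wise rank-one estimate $\|T_\rho\|_1\le\sup_f\|B\delta_f\|_2\,\|M_\rho\|_1$ replaces the paper's Schur-test bound $\|B\|\le\sqrt3(q+1)$ and the trace-ideal property. You then use the Fredholm trace expansion and regroup based closed paths into powers of primitive cycles, and you justify this step in more detail than the paper does.

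One harmless slip: ``each vertex has degree at most $3$'' gives $3(m+1)$ edge states at height $m$, not $2(m+2)$. Your bound $2(m+2)$ does follow from the sharper count $2m+3$ (one top vertex of degree $3$ and $m$ vertical vertices of degree $2$). In any case, any polynomial bound suffices.
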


\begin{proof}
At height $m\ge1$, there is one top vertex of quotient degree $3$ and $m$
vertical vertices of quotient degree $2$.  Hence there are $2m+3$ oriented
edge states with origin at height $m$, while the root has two.  Therefore
\begin{equation}\label{eq:Mtrace}
 \|M_\rho\|_1
 =\sum_{e\in\OrE(Y_q)}\rho^{h(e)}
 =2+\sum_{m\ge1}(2m+3)\rho^m
 =2+\frac{3\rho}{1-\rho}+\frac{2\rho}{(1-\rho)^2}<\infty,
\end{equation}
where $\|\,\cdot\,\|_1$ is the trace norm.  Thus $M_\rho$ is trace class.
Every row sum of $B$ is $q+1$, while each column has at most three nonzero
entries, all bounded by $q+1$.  The Schur test gives
\[
 \|B\|\le\sqrt3\,(q+1).
\]
Hence $B$ is bounded, and the trace-class ideal property gives
\[
 \|T_\rho\|_1\le\|B\|\,\|M_\rho\|_1<\infty.
\]
The function $\det(I-uT_\rho)$ is therefore entire in $u$.  The standard trace
expansion for a trace-class determinant gives \eqref{eq:traceexpansion} near
the origin.  Expanding the trace in the oriented-edge basis and grouping
closed paths into powers of primitive cycles gives
\eqref{eq:regularizedeuler}.  See \cite{Simon2005,Deitmar2015} for the
corresponding determinant principles.
\end{proof}

\begin{remark}
The placement $T_\rho=BM_\rho$ is not arbitrary.  It assigns
$\rho^{h(e_{j+1})}$ to each transition $e_j\to e_{j+1}$, so a closed product
has the cyclically invariant weight $\rho^{H(C)}$.  Symmetric damping
$M_\rho^{1/2}BM_\rho^{1/2}$ gives the same closed-cycle weights and the same
nonzero Fredholm spectrum.
\end{remark}
\subsection{The first divergent coefficient}

For the cycle $C_n$ in \cref{cor:combdiv},
\[
 h(a_n)=n,\quad h(\bar a_n)=n+1,
 \quad h(b_{n,0})=n,\quad h(\bar b_{n,0})=n+1.
\]
Thus
\begin{equation}\label{eq:Cnheight}
 H(C_n)=4n+2.
\end{equation}

\begin{proposition}[Length-four cusp contribution]\label{prop:D4}
The cycles $C_n$, $n\ge1$, contribute
\begin{equation}\label{eq:D4}
 D_4(\rho)
 =\sum_{n\ge1}W(C_n)\rho^{H(C_n)}
 =\frac{q(q-1)\rho^6}{1-\rho^4}
\end{equation}
to the coefficient of $u^4$ in $\log Z(u,\rho)$.  As $\rho\uparrow1$,
\begin{equation}\label{eq:D4asymp}
 D_4(\rho)=\frac{q(q-1)}{4(1-\rho)}+O(1).
\end{equation}
\end{proposition}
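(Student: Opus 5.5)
The plan is a direct computation that feeds the cycles $C_n$ into the regularized Euler product of \cref{thm:fredholm}. By \eqref{eq:regularizedeuler}, for small $|u|$,
\[
 \log Z(u,\rho)=\sum_{[C]\ \mathrm{primitive}}\sum_{k\ge1}\frac1k W(C)^k\rho^{kH(C)}u^{k\ell(C)}.
\]
We need the $u^4$ coefficient, restricted to the primitive classes $[C_n]$. Each $C_n$ has length four, so only $k=1$ contributes to $u^4$. That term is $W(C_n)\rho^{H(C_n)}$.

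Equivalently, in \eqref{eq:traceexpansion} the class $[C_n]$ gives four diagonal contributions to $\Tr(T_\rho^4)$, one per rotation. Each carries the factor $\rho^{H(C_n)}$, by the remark after \cref{thm:fredholm}. Dividing by $m=4$ gives the same single term.

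Next we insert the explicit data:
\begin{itemize}
 \item $W(C_n)=q(q-1)$, independent of $n$, from \eqref{eq:Cnweight}.
 \item $H(C_n)=4n+2$, from \eqref{eq:Cnheight}. This uses $h(e)=d(o(e),v_0)$, with $o(\bar a_n)=v_{n+1}$ and $o(\bar b_{n,0})=w_{n,1}$, both at distance $n+1$.
\end{itemize}
The $C_n$ are distinct primitive classes by \cref{thm:obstruction}. Summing the geometric series gives
\[
 D_4(\rho)=q(q-1)\sum_{n\ge1}\rho^{4n+2}=\frac{q(q-1)\rho^6}{1-\rho^4}.
\]

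For \eqref{eq:D4asymp}, factor $1-\rho^4=(1-\rho)(1+\rho+\rho^2+\rho^3)$. The function $g(\rho)=\rho^6/(1+\rho+\rho^2+\rho^3)$ is smooth near $\rho=1$ with $g(1)=1/4$. Hence
\[
 g(\rho)=\tfrac14+O(1-\rho),
\]
and dividing by $1-\rho$ leaves the stated leading term plus an $O(1)$ remainder.

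There is no real obstacle. The only point needing care is the bookkeeping of the rotation multiplicity against the $1/m$ factor, which leaves one term per primitive class, and checking that no nonprimitive power contributes at $u^4$. The latter holds because the square of a length-two cycle is not among the $C_n$: each $C_n$ uses two distinct unoriented edges.
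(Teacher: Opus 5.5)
Your proposal is correct and follows essentially the same route as the paper. You insert $W(C_n)=q(q-1)$ and $H(C_n)=4n+2$, sum the geometric series, and observe that the four pointed rotations of each $C_n$ in $\Tr(T_\rho^4)$ cancel the factor $1/4$; equivalently, only $k=1$ of each primitive length-four class contributes at $u^4$. Your factorization $1-\rho^4=(1-\rho)(1+\rho+\rho^2+\rho^3)$ simply makes explicit the asymptotic step that the paper leaves implicit.
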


\begin{proof}
Combine \eqref{eq:Cnweight} and \eqref{eq:Cnheight} and sum the geometric
series.  In the trace formula each primitive cycle has four pointed rotations,
which cancel the factor $1/4$ in \eqref{eq:traceexpansion}.
\end{proof}

This calculation is the first coefficient of the complete counterterm
constructed below.

\subsection{Primitive-cell factorization}

Horizontal translation creates infinitely many copies of the same tail cycle.
To choose one representative, translate the cycle until its smallest top
coordinate is one; we call that representative \emph{anchored}.  This removes
the translation multiplicity without changing its length or weight.

Let $\mathcal P_{\rm cell}$ be the collection of primitive positive cycles in
the homogeneous part of the one-sided comb tail whose minimal top coordinate
is one.  Every
such cycle uses $a_1$; otherwise it could neither leave nor close at its minimal
top vertex.  Consequently its Euler product is exactly
$Z_{\rm cell}$ from \cref{cor:cellzeta}.  For
$P\in\mathcal P_{\rm cell}$ put
\begin{equation}\label{eq:cell-data}
 \ell_P=\ell(P),\qquad w_P=W(P),\qquad \eta_P=H(P),
\end{equation}
where $P$ is anchored at minimal top coordinate one.

Every primitive cycle not meeting $v_0$ has a unique minimal top coordinate
$n\ge1$.  It is the translate of a unique $P\in\mathcal P_{\rm cell}$; denote
this translated cycle by $P_n$.  Its length and weight are unchanged, while
its integrated height becomes
\begin{equation}\label{eq:translatedheight}
 H(P_n)=\eta_P+(n-1)\ell_P.
\end{equation}
Let $Z_{\rm root}(u,\rho)$ denote the regularized Euler product over primitive
cycles meeting $v_0$.  We write $Z_{\rm root}(u)$ for the same root-meeting
Euler product without height damping.  It is well defined near $u=0$ by the
local theory, since all of these cycles meet a fixed finite edge-state set.
In general $Z_{\rm root}\ne Z_{a_0}$: the former also includes cycles confined
to the vertical ray attached to $v_0$.
The preceding partition gives the exact factorization
\begin{equation}\label{eq:primitivefactorization}
 Z(u,\rho)=Z_{\rm root}(u,\rho)
 \prod_{P\in\mathcal P_{\rm cell}}\prod_{n\ge1}
 \left(1-w_Pu^{\ell_P}
 \rho^{\eta_P+(n-1)\ell_P}\right)^{-1}.
\end{equation}
For fixed $P$, the inner product is
\[
 (w_Pu^{\ell_P}\rho^{\eta_P};\rho^{\ell_P})_\infty^{-1},
 \qquad
 (a;s)_\infty:=\prod_{j=0}^\infty(1-as^j),
\]
the reciprocal infinite shifted factorial.  Taking logarithms gives
\begin{equation}\label{eq:bulklog}
 \log\frac{Z(u,\rho)}{Z_{\rm root}(u,\rho)}
 =\sum_{P\in\mathcal P_{\rm cell}}\sum_{k\ge1}
 \frac{(w_Pu^{\ell_P})^k\rho^{k\eta_P}}
 {k(1-\rho^{k\ell_P})}.
\end{equation}

\subsection{The complete counterterm and uniform Abel finite part}

For one orbit and its $k$th repetition, \eqref{eq:bulklog} gives
\[
 \frac1{k(1-\rho^{k\ell_P})}
 \sim \frac1{k^2\ell_P(1-\rho)}
 \qquad(\rho\uparrow1).
\]
Summation over $k$ gives $\ell_P^{-1}\Li(w_Pu^{\ell_P})$, motivating the
counterterm below.

Define the bulk \emph{integrated-pressure counterterm}
\begin{equation}\label{eq:pressure-counterterm}
 \mathscr P(u)=\sum_{P\in\mathcal P_{\rm cell}}
 \frac1{\ell_P}\Li(w_Pu^{\ell_P}).
\end{equation}
Because
$Z_{\rm cell}(u)=\prod_P(1-w_Pu^{\ell_P})^{-1}$, differentiation term by
term gives the useful algebraic integral representation
\begin{equation}\label{eq:pressure-integral}
 \mathscr P(u)=\int_0^u\frac{\log Z_{\rm cell}(t)}t\,dt
 =-\int_0^u\frac{\log(1-tV(t)x(t))}t\,dt.
\end{equation}
Thus the complete divergent coefficient is explicitly computable from the
algebraic function \eqref{eq:cellzeta}, even though its primitive-orbit form
naturally involves dilogarithms.

For later use, define also the height moment series
\begin{equation}\label{eq:heightmoment}
 \mathscr H(u)=-\sum_{P\in\mathcal P_{\rm cell}}
 \frac{\eta_P}{\ell_P}\log(1-w_Pu^{\ell_P}).
\end{equation}

The next definition isolates the features of the comb that are actually used
in the renormalization.  It applies to a transition system whether or not the
individual stationary cells have finitely many internal states.

\begin{definition}[Stationary translated-orbit system]
\label{def:translated-orbit-system}
Let $\mathcal P$ be a collection of primitive cycles equipped with maps
\[
 \ell:\mathcal P\to\mathbb Z_{\ge1},\qquad
 W:\mathcal P\to\mathbb C,\qquad
 H:\mathcal P\to\mathbb N_0,
\]
recording length, weight and integrated height.  For $0<\rho<1$, the
height-regularized Euler factor of $C\in\mathcal P$ is
$(1-W(C)u^{\ell(C)}\rho^{H(C)})^{-1}$.  These primitive-orbit data have a
\emph{stationary translated-orbit decomposition with growth constant
$\Lambda>0$} if $\mathcal P$ splits into a root family
$\mathcal P_{\rm root}$ and families
\[
 \{P_n:n\ge1\},\qquad P\in\mathcal P_{\rm cell},
\]
Write $\ell_P=\ell(P_1)$, $w_P=W(P_1)$ and $\eta_P=H(P_1)$.  The following
conditions are required.
\begin{enumerate}[label=(\roman*)]
\item Translation preserves length and weight and shifts integrated height
affinely:
\[
 \ell(P_n)=\ell_P,\qquad W(P_n)=w_P,\qquad
 H(P_n)=\eta_P+(n-1)\ell_P.
\]
\item For some constants $C,C_0>0$,
\begin{equation}\label{eq:abstract-orbit-growth}
 \sum_{\substack{P\in\mathcal P_{\rm cell}\\\ell_P=m}}|w_P|
 \le C\Lambda^m,
 \qquad |w_P|\le\Lambda^{\ell_P},
 \qquad \eta_P\le C_0\ell_P^2.
\end{equation}
\item The products
\[
 Z_{\rm root}(u,\rho)
 =\prod_{C\in\mathcal P_{\rm root}}
   (1-W(C)u^{\ell(C)}\rho^{H(C)})^{-1},
 \qquad
 Z_{\rm root}(u)
 =\prod_{C\in\mathcal P_{\rm root}}
   (1-W(C)u^{\ell(C)})^{-1}
\]
converge normally on $|u|<\Lambda^{-1}$, and
$Z_{\rm root}(u,\rho)\to Z_{\rm root}(u)$ there locally uniformly as
$\rho\uparrow1$.
\end{enumerate}
The regularized Euler product associated with this decomposition is
\begin{equation}\label{eq:abstract-regularized-product}
 Z(u,\rho)=Z_{\rm root}(u,\rho)
 \prod_{P\in\mathcal P_{\rm cell}}\prod_{n\ge1}
 \left(1-w_Pu^{\ell_P}
 \rho^{\eta_P+(n-1)\ell_P}\right)^{-1}.
\end{equation}
Consequently its bulk logarithm has the form
\begin{equation}\label{eq:abstract-bulk-log}
 \sum_{P\in\mathcal P_{\rm cell}}\sum_{k\ge1}
 \frac{(w_Pu^{\ell_P})^k\rho^{k\eta_P}}
 {k(1-\rho^{k\ell_P})}.
\end{equation}
We write
\begin{equation}\label{eq:abstract-cell-zeta}
 Z_{\rm cell}(u)=
 \prod_{P\in\mathcal P_{\rm cell}}(1-w_Pu^{\ell_P})^{-1}
\end{equation}
for the anchored cell Euler product, initially as an analytic germ at the
origin.
\end{definition}

Before treating the whole orbit family, it is useful to keep one anchored
primitive cell cycle $P$ fixed.  Put $z_P=w_Pu^{\ell_P}$ and assume
$|z_P|<1$.  Then
\[
 \Phi_P(u,\rho)=\sum_{k\ge1}
 \frac{z_P^k\rho^{k\eta_P}}{k(1-\rho^{k\ell_P})}.
\]
Expanding the geometric denominator at $\rho=1$ and then summing over the
repetition number $k$ gives
\begin{align}\label{eq:one-orbit-warmup}
 \Phi_P(u,\rho)
 &=\frac{\Li(z_P)}{\ell_P(1-\rho)}
 +\left(\frac{\eta_P}{\ell_P}-\frac12\right)\log(1-z_P)
-\frac{\Li(z_P)}{2\ell_P}+o(1).
\end{align}
Here the error tends to zero as $\rho\uparrow1$, locally uniformly where
$|z_P|<1$.
The first term is the removed divergence and the other two form the finite
contribution of this orbit family.  Since the fixed-$P$ expansion does not
justify summation over all primitive cells, the following family estimate
provides the uniform interchange of limit and sum.

\begin{lemma}[Uniform Abelian finite part]\label{lem:uniform-abel}
Let $J$ be countable, let $\ell_j\ge1$ and $\eta_j\ge0$ be integers satisfying
$\eta_j\le C_0\ell_j^2$ for some $C_0>0$, and let $z_j$ be holomorphic on a
disk $D$.  Suppose
that for every compact $K\Subset D$,
\begin{equation}\label{eq:abel-summability}
 \sum_{j\in J}\sum_{k\ge1}
 k^2(1+\ell_j)^3\sup_{u\in K}|z_j(u)|^k<\infty.
\end{equation}
Here $K\Subset D$ means that $K$ is a compact subset of $D$.
For $0<\rho<1$ put
\[
 \Phi_j(u,\rho)=\sum_{k\ge1}
 \frac{z_j(u)^k\rho^{k\eta_j}}{k(1-\rho^{k\ell_j})}.
\]
Then, locally uniformly on $D$,
\begin{align}
 &\sum_{j\in J}\left\{
 \Phi_j(u,\rho)-
 \frac{\Li(z_j(u))}{\ell_j(1-\rho)}\right\}
 \longrightarrow \sum_{j\in J}\left\{
 \left(\frac{\eta_j}{\ell_j}-\frac12\right)
 \log(1-z_j(u))
 -\frac{\Li(z_j(u))}{2\ell_j}\right\}.
 \label{eq:abel-family-limit}
\end{align}
Both series on the right converge normally.
\end{lemma}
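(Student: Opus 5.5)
The plan is to prove the limit termwise in the pair $(j,k)$ with an error bound that is uniform in $\rho$ and summable, and then to interchange limit and sum by dominated convergence (Tannery's theorem). For fixed $j$ and $k$, set $s=1-\rho$, $L=k\ell_j$ and $N=k\eta_j$. The summand minus its divergent part, namely
\[
 \frac{\rho^{N}}{1-\rho^{L}}-\frac1{L(1-\rho)},
\]
tends to $\frac{L+1}{2L}-\frac{N}{L}$ as $\rho\uparrow1$. Multiplying by $z_j^k/k$ and summing over $k$ recovers the right-hand side of \eqref{eq:abel-family-limit}, because
\[
 \sum_k \frac{\eta_j}{\ell_j}\frac{z^k}{k}=-\frac{\eta_j}{\ell_j}\log(1-z)
 \quad\text{and}\quad
 \sum_k \frac{k\ell_j+1}{2k\ell_j}\frac{z^k}{k}=-\tfrac12\log(1-z)+\frac{\Li(z)}{2\ell_j}.
\]
The sign bookkeeping must be checked carefully here: the claimed limit has $+(\eta_j/\ell_j-\tfrac12)\log(1-z)$ and $-\Li/(2\ell_j)$. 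Before the main estimate I would re-expand $\rho^N/(1-\rho^L)$ at $s=0$ and fix the exact constant term, so that it matches \eqref{eq:one-orbit-warmup}.

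The key step is a uniform bound for $0<\rho<1$:
\[
 \Bigl|\frac{\rho^{N}}{1-\rho^{L}}-\frac1{L(1-\rho)}\Bigr|\le C(1+N+L)\le C'(1+\ell_j)^2k^2,
\]
using $\eta_j\le C_0\ell_j^2$. I would prove it by writing
\[
 \frac{\rho^N}{1-\rho^L}-\frac1{L(1-\rho)}=\frac{\rho^N-1}{1-\rho^L}+\Bigl(\frac1{1-\rho^L}-\frac1{L(1-\rho)}\Bigr).
\]
For the first piece, $\frac{1-\rho^N}{1-\rho^L}\le N/L\cdot\max(1,\dots)$. More precisely, $(1-\rho^N)/(1-\rho)\le N$ and $(1-\rho^L)/(1-\rho)\ge L\rho^{L-1}$, and this degenerates as $\rho\to0$. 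So I would split into the ranges $\rho\le1/2$, where everything is trivially bounded by $2+1/L$ together with $N$-independent bounds, and $\rho\ge1/2$. On $\rho\ge1/2$ I would use $1-\rho^L\ge (1-\rho)\sum_{i<L}\rho^i\ge c\min(L,1/(1-\rho))(1-\rho)$. For the second piece, $\sum_{i<L}\rho^i$ versus $L$ gives a difference bounded by $L^2(1-\rho)/\bigl(L\sum\rho^i(1-\rho)\bigr)$, which is $O(L)$ after a similar case split. The resulting dominating bound is $C(1+\ell_j)^2k\,|z_j|^k$ per term. This is summable over $(j,k)$ uniformly on compact $K$ by \eqref{eq:abel-summability}, with room to spare, since the hypothesis has $k^2(1+\ell_j)^3$.

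Dominated convergence on each compact $K$, with the sup over $K$ taken inside, then gives local uniform convergence. The same majorant shows that both series on the right converge normally: $|\log(1-z)|\le C|z|$ and $|\Li(z)|\le C|z|$ for $|z|\le\sup_K|z_j|<1$. Here the hypothesis forces $\sup_K|z_j|\to0$ along $j$, and the sup is $<1$ for each $j$, since otherwise the series over $k$ would diverge. The same summability also justifies that each $\Phi_j$ and the subtracted dilogarithms are finite. I expect the main obstacle to be the uniform elementary inequality near $\rho\uparrow1$ when $L(1-\rho)$ is of order one, because neither the Taylor expansion at $\rho=1$ nor a crude bound applies in that regime. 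I would handle it through the convexity bound $1-\rho^L\ge\frac{L(1-\rho)}{1+L(1-\rho)}$ and direct algebra.
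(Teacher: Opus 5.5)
Your strategy is sound, but the one explicit computation in it is wrong, and the mismatch you flagged comes from that slip rather than from the statement. The termwise limit is $\frac{L-1}{2L}-\frac NL$, not $\frac{L+1}{2L}-\frac NL$; for $N=0$, $L=1$ the difference vanishes identically, which rules out your constant. With $S_m(\rho)=\sum_{i<m}\rho^i$ one has the exact identity
\[
 \frac{\rho^N}{1-\rho^L}-\frac1{L(1-\rho)}
 =-\frac{S_N(\rho)}{S_L(\rho)}+\frac{1}{L\,S_L(\rho)}\sum_{i<L}S_i(\rho)
 \longrightarrow-\frac NL+\frac{L-1}{2L}\qquad(\rho\uparrow1).
\]
Then $\sum_{k\ge1}\frac{k\ell_j-1}{2k\ell_j}\cdot\frac{z^k}{k}=-\frac12\log(1-z)-\frac{\Li(z)}{2\ell_j}$ produces exactly the right-hand side of \eqref{eq:abel-family-limit}. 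The same identity also gives your uniform bound without any case split. Since $S_L\ge1$, $S_N\le N$ and $0\le S_i\le S_L$, the difference lies in $[-N,1]$ for every $\rho\in(0,1)$. So neither the degeneration as $\rho\to0$ nor the regime $L(1-\rho)\asymp1$ causes trouble.

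With that correction your argument is complete, and it takes a different route from the paper. The paper substitutes $t=-\log\rho$, $x=k\ell t$ and $a=\eta/\ell$, and subtracts the full three-term expansion to form a remainder $R_{k,\ell,\eta}$. It then controls $R$ by Taylor remainders of $e^{-ax}$ and $(1-e^{-x})^{-1}$ in two regimes. For $x\le1$ it gets $|R|\le C\varepsilon k(1+\ell)^3$, hence an $O_K(\varepsilon)$ contribution. For $x>1$ it has only $|R|\le C(1+\ell)$, and uses dominated convergence because the indicator of that regime vanishes for each fixed $(j,k)$. You instead use a single $\rho$-uniform majorant from exact geometric sums, then Tannery's theorem.

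Neither argument yields a global rate, since the paper's $x>1$ piece is only $o(1)$. Your version is more elementary and avoids the correction term $\frac1{k\ell}\bigl(\frac1t-\frac1\varepsilon+\frac12\bigr)$ between $t$ and $\varepsilon$. It also needs only $\sum_{j,k}(1+\ell_j)^2\sup_K|z_j|^k<\infty$, whereas the paper's small-$x$ estimate uses the factor $(1+\ell_j)^3$ in \eqref{eq:abel-summability}. The paper's version does exhibit the quantitative $O(\varepsilon)$ error in the range $k\ell_j(1-\rho)\lesssim1$. Your treatment of normal convergence of the limiting series, via $\sup_j\sup_K|z_j|<1$, is fine.
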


\begin{proof}
Set $\varepsilon=1-\rho$.  For a fixed triple $(k,\ell,\eta)$ define
\[
 R_{k,\ell,\eta}(\rho)=
 \frac{\rho^{k\eta}}{1-\rho^{k\ell}}
 -\frac1{k\ell\varepsilon}
 -\left(\frac12-\frac1{2k\ell}-\frac{\eta}{\ell}\right).
\]
Put $t=-\log\rho$,
$a=\eta/\ell$ and $x=k\ell t$.  Then
\begin{align}
 R_{k,\ell,\eta}(\rho)
 &=\left\{\frac{e^{-ax}}{1-e^{-x}}-\frac1x-
       \left(\frac12-a\right)\right\}\notag\\
 &\quad+\frac1{k\ell}
 \left(\frac1t-\frac1\varepsilon+\frac12\right).
 \label{eq:abel-remainder-split}
\end{align}
For $0<\varepsilon\le1/2$, the last parenthesis is $O(\varepsilon)$.
The elementary expansions of $e^{-ax}$ and
$(1-e^{-x})^{-1}$, with their integral remainders, give a constant depending
only on $C_0$ such that
\begin{align}
 |R_{k,\ell,\eta}(\rho)|
 &\le C\varepsilon k(1+\ell)^3,
 &&x\le1,\label{eq:abel-small-bound}\\
 |R_{k,\ell,\eta}(\rho)|
 &\le C(1+\ell),
 &&x>1.\label{eq:abel-large-bound}
\end{align}
Indeed $a\le C_0\ell$ and $t\le2\varepsilon$ in the first range; in the
second, $(1-e^{-x})^{-1}$ is uniformly bounded and $x^{-1}\le1$.

Multiply by $|z_j(u)|^k/k$ and sum.  On $x\le1$,
\eqref{eq:abel-summability} makes the total remainder
$O_K(\varepsilon)$.  On $x>1$, \eqref{eq:abel-large-bound} gives a summable
majorant independent of $\rho$, and the indicator of this range tends to zero
for each fixed $(j,k)$.  Dominated convergence therefore gives $o_K(1)$.
We have proved, uniformly for $u\in K$, the summable termwise expansion
\begin{equation}\label{eq:abel-expansion}
 \frac{\rho^{k\eta}}{1-\rho^{k\ell}}
 =\frac1{k\ell(1-\rho)}
 +\frac12-\frac1{2k\ell}-\frac{\eta}{\ell}+o(1).
\end{equation}
Finally use
$\sum_{k\ge1}z^k/k=-\log(1-z)$ and
$\sum_{k\ge1}z^k/k^2=\Li(z)$ to obtain
\eqref{eq:abel-family-limit}.  The same majorants prove normal convergence of
the limiting series.
\end{proof}

\begin{theorem}[Renormalization of a stationary translated tail]
\label{thm:abstract-renormalized-limit}
Let primitive-orbit data have a stationary translated-orbit decomposition with
growth constant $\Lambda$.  On $|u|<\Lambda^{-1}$ the
series
\begin{align}
 \mathscr P(u)&=\sum_{P\in\mathcal P_{\rm cell}}
 \frac1{\ell_P}\Li(w_Pu^{\ell_P}),\label{eq:abstract-counterterm}\\
 \mathscr H(u)&=-\sum_{P\in\mathcal P_{\rm cell}}
 \frac{\eta_P}{\ell_P}\log(1-w_Pu^{\ell_P})
 \label{eq:abstract-height-moment}
\end{align}
converge normally.  The limit
\begin{equation}\label{eq:renormalizedlimit}
 Z_{\rm ren}(u)=\lim_{\rho\uparrow1}
 \exp\!\left(-\frac{\mathscr P(u)}{1-\rho}\right)Z(u,\rho)
\end{equation}
exists locally uniformly on that disk and equals
\begin{align}
 Z_{\rm ren}(u)
 &=Z_{\rm root}(u)
 \prod_{P\in\mathcal P_{\rm cell}}
 (1-w_Pu^{\ell_P})^{\eta_P/\ell_P-1/2}
 \exp\!\left[-\frac{\Li(w_Pu^{\ell_P})}{2\ell_P}\right]
 \label{eq:renormalizedproduct}\\
 &=Z_{\rm root}(u)Z_{\rm cell}(u)^{1/2}
 \exp\!\left[-\mathscr H(u)-\frac12\mathscr P(u)\right].
 \label{eq:renormalizedcompact}
\end{align}
All logarithms and fractional powers are the analytic branches equal to zero
and one, respectively, at $u=0$.
\end{theorem}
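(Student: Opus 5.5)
The plan is to reduce everything to \cref{lem:uniform-abel}, applied with index set $J=\mathcal P_{\rm cell}$, $\ell_j=\ell_P$, $\eta_j=\eta_P$ and $z_j(u)=w_Pu^{\ell_P}$ on the disk $D=\{|u|<\Lambda^{-1}\}$. The first step is to verify the summability hypothesis \eqref{eq:abel-summability}. Take a compact $K\subset\{|u|\le s\}$ with $s<\Lambda^{-1}$, and group cells by length $m$. The bound $|w_P|^k\le|w_P|\,\Lambda^{(k-1)m}$ (from $|w_P|\le\Lambda^{\ell_P}$) together with the length-$m$ sum bound in \eqref{eq:abstract-orbit-growth} gives
\[
 \sum_{P}\sum_{k\ge1}k^2(1+\ell_P)^3\sup_K|z_P|^k
 \le C\sum_{m\ge1}(1+m)^3\sum_{k\ge1}k^2(\Lambda s)^{km}<\infty ,
\]
and $\eta_P\le C_0\ell_P^2$ is assumed directly. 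The same majorant (with $k^2$ and the powers of $\ell$ dropped) also gives normal convergence of $\mathscr P$ and $\mathscr H$ on $D$. Here one uses $|\Li(z)|\le\sum_k|z|^k/k^2$ and $|\log(1-z)|\le\sum_k|z|^k/k$, valid because $|z_P|\le(\Lambda s)^{\ell_P}<1$. The factor $\eta_P/\ell_P\le C_0\ell_P$ in $\mathscr H$ is absorbed by the polynomial weight.

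The second step is to take logarithms in \eqref{eq:abstract-regularized-product}. The bulk logarithm \eqref{eq:abstract-bulk-log} is $\sum_P\Phi_P(u,\rho)$, and this is a legitimate rearrangement because the double series over $(P,n,k)$ converges absolutely for $\rho<1$. Indeed $|z_P|^k\rho^{k\eta_P}\sum_n\rho^{k(n-1)\ell_P}\le|z_P|^k/(1-\rho)$, which is summable by the previous step. Hence
\[
 \log Z(u,\rho)-\frac{\mathscr P(u)}{1-\rho}
 =\log Z_{\rm root}(u,\rho)+\sum_P\Bigl\{\Phi_P(u,\rho)-\frac{\Li(z_P)}{\ell_P(1-\rho)}\Bigr\}.
\]
By \cref{lem:uniform-abel}, the sum on the right converges locally uniformly to $\sum_P\{(\eta_P/\ell_P-\tfrac12)\log(1-z_P)-\Li(z_P)/(2\ell_P)\}$. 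By hypothesis (iii), $\log Z_{\rm root}(u,\rho)\to\log Z_{\rm root}(u)$ locally uniformly. One subtle point is branch choice: we should work with exponentials so that the statement is about $Z$ itself. Each product converges normally, so $Z(u,\rho)$ is the exponential of the normally convergent logarithmic series with the branch vanishing at $0$. Exponentiating a locally uniform limit of holomorphic logs gives locally uniform convergence of $Z_{\rm ren}$, and this yields \eqref{eq:renormalizedproduct}.

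The third step is the compact form \eqref{eq:renormalizedcompact}. Split $(\eta_P/\ell_P-\tfrac12)\log(1-z_P)$ into $(\eta_P/\ell_P)\log(1-z_P)$, whose sum over $P$ is $-\mathscr H$, and $-\tfrac12\log(1-z_P)$, whose sum is $\tfrac12\log Z_{\rm cell}$. The remaining terms sum to $-\tfrac12\mathscr P$. Each of the three series converges normally, so the splitting is justified, and the branches agree at $u=0$.

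The only real obstacle is the uniform interchange of $\rho\uparrow1$ with the sum over infinitely many orbit families. That difficulty is already packaged in \cref{lem:uniform-abel}, so the substantive work here is confirming the weighted summability \eqref{eq:abel-summability} from the growth constant. This needs some care: only the combined bounds $|w_P|\le\Lambda^{\ell_P}$ and $\sum_{\ell_P=m}|w_P|\le C\Lambda^m$ control $\sum_P|w_P|^k$ uniformly in $k$.
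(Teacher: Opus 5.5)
Your proposal is correct and follows the paper's proof essentially step for step. It applies \cref{lem:uniform-abel} with $J=\mathcal P_{\rm cell}$ and $z_P=w_Pu^{\ell_P}$, and it uses the same bound $\sum_{\ell_P=m}|w_P|^k\le C\Lambda^{mk}$ to obtain the summability majorant, the same absorption of $\eta_P/\ell_P\le C_0\ell_P$ to get normal convergence of $\mathscr P$ and $\mathscr H$, and then exponentiation together with hypothesis (iii). The paper leaves implicit your extra justification of the rearrangement for fixed $\rho<1$ (via $1-\rho^{k\ell_P}\ge1-\rho$) and your treatment of the logarithm branches; these are small refinements, not a different route.
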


\begin{proof}
Apply \cref{lem:uniform-abel} with
$J=\mathcal P_{\rm cell}$ and $z_P(u)=w_Pu^{\ell_P}$.  From
\eqref{eq:abstract-orbit-growth}, for every $k\ge1$,
\begin{equation}\label{eq:abstract-power-bound}
 \sum_{\ell_P=m}|w_P|^k
 \le\Lambda^{m(k-1)}
 \sum_{\ell_P=m}|w_P|
 \le C\Lambda^{mk}.
\end{equation}
If $K\Subset\{|u|<\Lambda^{-1}\}$, put
$r_K=\sup_{u\in K}|u|$ and $\theta=\Lambda r_K<1$.  Then
\begin{align*}
 &\sum_{P\in\mathcal P_{\rm cell}}\sum_{k\ge1}
 k^2(1+\ell_P)^3\sup_{u\in K}|w_Pu^{\ell_P}|^k\\
 &\qquad\le
 C\sum_{m\ge1}(1+m)^3\sum_{k\ge1}k^2\theta^{mk}<\infty.
\end{align*}
This is \eqref{eq:abel-summability}.  Since
$\eta_P/\ell_P\le C_0\ell_P$, the same majorant proves normal convergence of
$\mathscr P$ and $\mathscr H$ separately.

The logarithmic factorization \eqref{eq:abstract-bulk-log} and
\cref{lem:uniform-abel} now show that subtraction of
$\mathscr P(u)/(1-\rho)$ leaves, orbit by orbit, the normally convergent term
\begin{equation}\label{eq:finite-per-orbit}
 \left(\frac{\eta_P}{\ell_P}-\frac12\right)
 \log(1-w_Pu^{\ell_P})
 -\frac1{2\ell_P}\Li(w_Pu^{\ell_P}).
\end{equation}
Exponentiation and the root convergence in
\cref{def:translated-orbit-system} prove \eqref{eq:renormalizedproduct}.
Finally
\eqref{eq:renormalizedcompact} follows from the definitions of
$Z_{\rm cell}$, $\mathscr P$ and $\mathscr H$.
\end{proof}

\begin{corollary}[Complete renormalized limit for $Y_q$]
\label{thm:renormalizedlimit}
The one-sided comb system satisfies the hypotheses of
\cref{thm:abstract-renormalized-limit} with $\Lambda=q+1$.  Consequently
\eqref{eq:pressure-counterterm} and \eqref{eq:heightmoment} converge normally
on $|u|<(q+1)^{-1}$, and the locally uniform limit
\eqref{eq:renormalizedlimit} is given by
\eqref{eq:renormalizedproduct}--\eqref{eq:renormalizedcompact} throughout
that disk.
\end{corollary}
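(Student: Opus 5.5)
We verify the three conditions of \cref{def:translated-orbit-system} with $\Lambda=q+1$ and then invoke \cref{thm:abstract-renormalized-limit}. The decomposition itself is already in place. By \eqref{eq:primitivefactorization} the primitive cycles split into the root family (those meeting $v_0$) and the translated families $P_n$. Horizontal translation $v_n\mapsto v_{n+1}$, $w_{n,k}\mapsto w_{n+1,k}$ is an isomorphism of the homogeneous part. It preserves indices away from the root, hence lengths and weights, and it raises every edge height by one. This gives $H(P_n)=\eta_P+(n-1)\ell_P$ as in \eqref{eq:translatedheight}, which is condition (i).

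For condition (ii) we use the row-sum bound $\sum_f b(e,f)=q+1$ from \eqref{eq:degree}. A cycle $P\in\mathcal P_{\rm cell}$ of length $m$ passes through $a_1$, so each such $P$ yields at least one closed path of length $m$ based at $a_1$ with the same weight. Therefore
\[
\sum_{\ell_P=m}w_P\le (B^m)_{a_1,a_1}\le (q+1)^m .
\]
The single-cycle bound $w_P\le(q+1)^{\ell_P}$ follows because each transition weight is at most $q+1$. For the height bound, an anchored cycle of length $\ell$ starts at top coordinate one, and every edge it visits lies within graph distance $\ell$ of $v_1$. Hence $h(e)\le 1+\ell$ along the cycle, and
\[
\eta_P\le \ell(\ell+1)\le 2\ell^2 .
\]
So we may take $C_0=2$.

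Condition (iii) is the step I expect to need the most care, because root-meeting cycles include arbitrarily deep vertical excursions and long horizontal excursions. Convergence itself is not the issue. Every cycle meeting $v_0$ meets the finite set $E_0$ of edges with origin $v_0$ (those issuing from $v_0$ and their reverses). By \cref{thm:localeuler}, $Z_{\rm root}=Z_{E_0}$, and
\[
\sum_{[C]\in\mathcal P_{E_0},\,\ell(C)=m}W(C)\le\sum_{e\in E_0}(B^m)_{e,e}\le |E_0|(q+1)^m .
\]
The same bound holds with the factor $\rho^{H(C)}\le1$ inserted. Consequently
\[
\sum_{C}\sup_{|u|\le r}\bigl|\log(1-W(C)\rho^{H(C)}u^{\ell(C)})\bigr|
\]
is dominated, uniformly in $\rho$, by $\sum_m |E_0|\bigl((q+1)r\bigr)^m$ up to a constant, for $r<(q+1)^{-1}$. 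This uses $|\log(1-z)|\le 2|z|$ for $|z|\le 1/2$. The finitely many short cycles for which $|W(C)u^{\ell(C)}|$ could exceed $1/2$ are handled by hand, noting that any zero of a factor would contradict the radius statement already known for the local zeta. Dominated convergence with termwise $\rho^{H(C)}\to1$ then gives locally uniform convergence $Z_{\rm root}(u,\rho)\to Z_{\rm root}(u)$ on $|u|<(q+1)^{-1}$.

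Finally, \eqref{eq:regularizedeuler} identifies the Fredholm regulator $Z(u,\rho)$ of \cref{thm:fredholm} with the product \eqref{eq:abstract-regularized-product} for small $|u|$. Both sides are analytic on the disk: the left by the Fredholm theory, the right by the normal-convergence bounds above. The identity therefore persists on all of $|u|<(q+1)^{-1}$. With this, \cref{thm:abstract-renormalized-limit} yields normal convergence of \eqref{eq:pressure-counterterm} and \eqref{eq:heightmoment} and the limit formulas \eqref{eq:renormalizedproduct}--\eqref{eq:renormalizedcompact}.
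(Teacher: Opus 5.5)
Your proposal is correct and follows the paper's proof. You check the same three conditions with the same bounds: $\sum_{\ell_P=m}w_P\le(B^m)_{a_1,a_1}\le(q+1)^m$, $\eta_P\le\ell_P(\ell_P+1)\le2\ell_P^2$, and a $C(q+1)^m$ bound on root cycles with termwise $\rho^{H(C)}\to1$ for (iii). Your closing identification of $\det(I-uT_\rho)^{-1}$ with the translated-orbit product on the whole disk, by analytic continuation, makes explicit a point the paper leaves implicit. The ``handled by hand'' step for short cycles is unnecessary, since $|W(C)\rho^{H(C)}u^{\ell(C)}|\le((q+1)|u|)^{\ell(C)}<1$ for every cycle.
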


\begin{proof}
Let
\[
 \mathcal A_m=\sum_{\substack{P\in\mathcal P_{\rm cell}\\\ell_P=m}}w_P.
\]
Every anchored primitive cycle passes through $a_1$.  Pointing it at one of
its visits to $a_1$ injects its weight into the total weight of length-$m$
loops based at $a_1$.  Since every row sum of $B$ is $q+1$,
\begin{equation}\label{eq:anchored-path-bound}
 \mathcal A_m\le (B^m)_{a_1,a_1}\le(q+1)^m,
 \qquad
 w_P\le(q+1)^{\ell_P}.
\end{equation}
A length-$\ell_P$ anchored cycle cannot rise above height $1+\ell_P$;
consequently
\begin{equation}\label{eq:comb-height-bound}
 \eta_P\le\ell_P(1+\ell_P)\le2\ell_P^2.
\end{equation}
The translated-height identity is \eqref{eq:translatedheight}.  Root cycles
meet a fixed finite state set, and their total length-$m$ weight is bounded by
$C(q+1)^m$.  Since $0<\rho^{H(C)}\le1$ and it tends to one for each root
cycle, the Weierstrass test gives
$Z_{\rm root}(u,\rho)\to Z_{\rm root}(u)$ locally uniformly on
$|u|<(q+1)^{-1}$.  Thus all conditions in
\cref{def:translated-orbit-system} hold with $\Lambda=q+1$, and the result
follows from \cref{thm:abstract-renormalized-limit}.
\end{proof}

The root finite part is itself explicit.  Primitive root cycles meeting $a_0$
give $Z_{a_0}$.  Those not meeting $a_0$ are confined to the root vertical ray;
their first-return series based at $b_{0,0}$ is $(r-1)V$.  Hence
\begin{equation}\label{eq:rootfactor}
 Z_{\rm root}(u)=\frac{Z_{a_0}(u)}{1-(r-1)V(u)},
\end{equation}
which is algebraic by \cref{thm:algebraiclocal}.

The anchored cycle $C_1$ has $(\ell,w,\eta)=(4,q(q-1),6)$, so
\begin{equation}\label{eq:pressure-u4}
 [u^4]\mathscr P(u)=\frac{q(q-1)}4,
\end{equation}
in agreement with \eqref{eq:D4asymp}.

\subsection{Dependence on the height normalization}
\label{sec:height-covariance}

Local changes of height leave the finite part unchanged, whereas a global
shift changes it by an explicit bulk factor.

\begin{theorem}[Height covariance and local stability]
\label{thm:height-covariance}
Write $Z^{h}(u,\rho)$ for the height-regularized determinant and
$Z_{\rm ren}^{h}(u)$ for the corresponding renormalized finite part constructed
from the height $h$.
\begin{enumerate}[label=(\roman*)]
\item If $c\in\mathbb Z_{\ge0}$ and $h_c=h+c$, then on
$|u|<(q+1)^{-1}$,
\begin{equation}\label{eq:constant-height-covariance}
 Z_{\rm ren}^{h_c}(u)
 =Z_{\rm cell}(u)^{-c}Z_{\rm ren}^{h}(u).
\end{equation}
\item If $\widetilde h-h$ has finite support and $\widetilde h$ is a
nonnegative integer-valued height for which the damped operator is trace
class, then
\begin{equation}\label{eq:finite-height-stability}
 Z_{\rm ren}^{\widetilde h}(u)=Z_{\rm ren}^{h}(u)
 \qquad(|u|<(q+1)^{-1}).
\end{equation}
\end{enumerate}
\end{theorem}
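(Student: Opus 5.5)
For (i) and (ii) alike, the plan is to reduce to \cref{thm:abstract-renormalized-limit} and the factorization \eqref{eq:primitivefactorization}. The point is that a change of height alters only the integrated heights $H(C)$. It leaves the lengths, the weights and hence the counterterm $\mathscr P(u)$ untouched, since $\mathscr P$ depends only on $(\ell_P,w_P)$.

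For (i), replacing $h$ by $h_c=h+c$ changes the integrated height of every closed path $C$ to $H(C)+c\,\ell(C)$. First, trace class is immediate because $M_\rho$ is replaced by $\rho^cM_\rho$. The root family keeps its structure: each root cycle acquires the factor $\rho^{c\ell(C)}\le1$, which tends to $1$. The Weierstrass argument in the proof of \cref{thm:renormalizedlimit} therefore still gives $Z_{\rm root}^{h_c}(u,\rho)\to Z_{\rm root}(u)$ locally uniformly on $|u|<(q+1)^{-1}$. For a cell family, the new data are
\[
 H_c(P_n)=\eta_P+c\ell_P+(n-1)\ell_P .
\]
So the system is again a stationary translated-orbit system, with $\eta_P$ replaced by $\eta_P^{(c)}=\eta_P+c\ell_P$, the same $\ell_P,w_P$ and $\Lambda=q+1$. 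The bound $\eta_P^{(c)}\le(2+c)\ell_P^2$ keeps \eqref{eq:abstract-orbit-growth} valid. Applying \eqref{eq:renormalizedproduct}, the exponent $\eta_P/\ell_P-1/2$ becomes $\eta_P/\ell_P+c-1/2$. The extra factor is then
\[
 \prod_P(1-w_Pu^{\ell_P})^{c}=Z_{\rm cell}(u)^{-c},
\]
with this product converging normally on the disk by the same majorant. Equivalently, in \eqref{eq:renormalizedcompact} the term $\mathscr H$ shifts by $-c\log Z_{\rm cell}$. This gives \eqref{eq:constant-height-covariance}.

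For (ii), let $S$ be the finite support of $\widetilde h-h$. I would not try to rebuild a translated decomposition. Instead I would compare the two regularized Euler products directly. Both $\log Z^{h}(u,\rho)$ and $\log Z^{\widetilde h}(u,\rho)$ are given by normally convergent primitive-cycle sums on $|u|<(q+1)^{-1}$, since the damping factors are at most $1$ and the weight bounds of \cref{thm:renormalizedlimit} hold. By the identity theorem, their ratio is
\[
 \frac{Z^{\widetilde h}(u,\rho)}{Z^{h}(u,\rho)}
 =\prod_{[C]\text{ primitive, }C\cap S\ne\emptyset}
 \frac{1-W(C)\rho^{H(C)}u^{\ell(C)}}{1-W(C)\rho^{\widetilde H(C)}u^{\ell(C)}},
\]
because cycles avoiding $S$ have identical factors. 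The exponential counterterm is the same for both heights, so it cancels, and it suffices to show that this ratio tends to $1$ locally uniformly.

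This last step is the main obstacle. What is needed is a $\rho$-uniform summable majorant for the cycles meeting the finite set $S$. I would obtain it as in \eqref{eq:anchored-path-bound}: pointing each such cycle at a visit to $S$ injects its weight into $\sum_{s\in S}(B^m)_{s,s}\le|S|(q+1)^m$. Together with $0<\rho^{H},\rho^{\widetilde H}\le1$, this makes the logarithms of both products over $C\cap S\ne\emptyset$ converge normally, uniformly in $\rho$. Each individual factor tends to $1-W(C)u^{\ell(C)}$ as $\rho\uparrow1$, so dominated convergence sends the logarithm of the ratio to $0$ uniformly on compacts. Multiplying by $\exp(-\mathscr P/(1-\rho))Z^{h}(u,\rho)\to Z^h_{\rm ren}(u)$, which holds by \cref{thm:renormalizedlimit}, then yields \eqref{eq:finite-height-stability}. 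In particular the limit for $\widetilde h$ exists, even though $\widetilde h$ is not assumed a priori to fit the translated-orbit framework.
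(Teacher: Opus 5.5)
Your proof is correct. Part (ii) is essentially the paper's argument. The counterterm is common to both heights, and the quotient of the regularized Euler products involves only primitive cycles meeting the finite support. Those cycles are dominated by $C(q+1)^m$ in length $m$ uniformly in $\rho$, and each factor tends to one.

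Part (i), however, follows a different route from the paper's. The paper never re-checks the translated-orbit hypotheses for $h_c$. It proceeds in three steps:
\begin{enumerate}[label=(\alph*)]
\item it uses $H(C)\mapsto H(C)+c\,\ell(C)$ to obtain the scaling identity $Z^{h_c}(u,\rho)=Z^{h}(\rho^{c}u,\rho)$;
\item it expands $\mathscr P(\rho^{c}u)=\mathscr P(u)-c(1-\rho)u\mathscr P'(u)+o(1-\rho)$ and uses local uniformity of the renormalized limit at the moving point $\rho^{c}u\to u$;
\item it identifies the resulting factor $\exp(-cu\mathscr P'(u))$ with $Z_{\rm cell}(u)^{-c}$ by differentiating \eqref{eq:pressure-integral}.
\end{enumerate}
You instead observe that the data $(\ell_P,w_P,\eta_P+c\ell_P)$ again form a stationary translated-orbit system with $\Lambda=q+1$. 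You then read the factor off \eqref{eq:renormalizedproduct} as an integer shift of each exponent $\eta_P/\ell_P-1/2$.

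Your route is transparent orbit by orbit, proves existence of the $h_c$-limit directly, and does not need the identity $u\mathscr P'=\log Z_{\rm cell}$. The paper's route uses only the locally uniform existence of the finite part for $h$, not the translated-orbit structure of $h$. It therefore applies verbatim to any height for which that finite part exists, for instance $\widetilde h+c$ in the setting of (ii). It also displays the covariance factor as the derivative of the counterterm.

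Two small points of exposition.
\begin{enumerate}[label=(\arabic*)]
\item In (ii), normal convergence of $\log Z^{h}(u,\rho)$ for fixed $\rho$ on the disk does not follow from the damping factors being at most $1$. The translates $C_n$ show that sums with damping merely bounded by one diverge. Convergence comes instead from summing the translates geometrically, which produces the factor $(1-\rho^{k\ell_P})^{-1}\le(1-\rho)^{-1}$ in \eqref{eq:bulklog}. Your $\rho$-uniform majorant is needed only for the cycles meeting $S$, and there your pointing argument is correct.
\item Under $\eta_P\mapsto\eta_P+c\ell_P$ the series $\mathscr H$ itself increases by $c\log Z_{\rm cell}$. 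It is the exponent $-\mathscr H$ in \eqref{eq:renormalizedcompact} that shifts by $-c\log Z_{\rm cell}$. Your product-form computation already has the correct sign.
\end{enumerate}
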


\begin{proof}
For a closed path $C$, replacing $h$ by $h+c$ replaces $H(C)$ by
$H(C)+c\ell(C)$.  Hence
\begin{equation}\label{eq:shifted-regularized-zeta}
 Z^{h_c}(u,\rho)=Z^h(\rho^c u,\rho).
\end{equation}
The bulk counterterm itself is unchanged, because its leading Abel coefficient
does not involve $\eta_P$.  Using
\[
 \mathscr P(\rho^c u)
 =\mathscr P(u)-c(1-\rho)u\mathscr P'(u)+o(1-\rho)
\]
in the renormalized limit gives
\[
 Z_{\rm ren}^{h_c}(u)
 =\exp\!\bigl(-cu\mathscr P'(u)\bigr)Z_{\rm ren}^{h}(u).
\]
Differentiating \eqref{eq:pressure-integral} yields
$u\mathscr P'(u)=\log Z_{\rm cell}(u)$, proving
\eqref{eq:constant-height-covariance}.

Now suppose $\widetilde h-h$ has finite support.  The two integrated heights
agree on every primitive translated cycle sufficiently far from that support,
so their complete counterterms coincide.  In the quotient of the two
regularized Euler products, only primitive cycles meeting a fixed finite state
set remain.  Their total absolute weight in length $m$ is bounded by
$C(q+1)^m$.  Each quotient factor tends to one as $\rho\uparrow1$, and this
geometric bound gives locally uniform dominated convergence on
$|u|<(q+1)^{-1}$.  Thus the quotient of the two renormalized limits tends to
one, proving \eqref{eq:finite-height-stability}.
\end{proof}

\subsection{Finite exhaustions and sharp-cutoff limits}
\label{sec:exhaustion}

We now compare the Abel height regularization with a sharp finite-height
cutoff.  The comparison separates the universal bulk pressure from the finite
geometric data retained by the two limiting procedures.

Let $\Pi_N$ project onto oriented edges whose origin has height at most $N$, and
put
\[
 B_{\le N}=\Pi_NB\Pi_N,
 \qquad
 T_{\rho,\le N}=\Pi_NT_\rho\Pi_N.
\]
Both act on finite-dimensional spaces.
Since $T_\rho$ is trace class and $\Pi_N\to I$ strongly,
\[
 \|T_{\rho,\le N}-T_\rho\|_1\longrightarrow0,
 \qquad
 \det(I-uT_{\rho,\le N})\longrightarrow\det(I-uT_\rho)
\]
locally uniformly in $u\in\mathbb C$.  This is the standard trace-norm
approximation of a trace-class operator by its finite-rank coordinate
compressions, together with continuity of the Fredholm determinant.

Without height damping, the length-four cusp contribution in a height-$N$
truncation is
\begin{equation}\label{eq:linearN}
 \sum_{n=1}^{N-1}W(C_n)=(N-1)q(q-1).
\end{equation}
Thus $\log\det(I-uB_{\le N})^{-1}$ has a coefficient growing linearly in $N$.
Define the finite-volume pressure and relative determinant by
\begin{align}
 p(u)&=\lim_{N\to\infty}\frac1N\log\det(I-uB_{\le N})^{-1},
 \label{eq:pressurelimit}\\
 Z_{\rm rel}(u)&=\lim_{N\to\infty}
 \exp(-N p(u))\det(I-uB_{\le N})^{-1}.
 \label{eq:relativelimit}
\end{align}
Here $p(u)$ is a per-cell logarithmic-zeta series; it should not be confused
with the scalar Gurevich pressure $P_G$ introduced in
\cref{sec:weighted-thermo-definitions}.
For $P\in\mathcal P_{\rm cell}$ let
\begin{equation}\label{eq:maxheight}
 m_P=\max\{h(e):e\text{ occurs in the anchored cycle }P\}.
\end{equation}

\begin{theorem}[Finite-volume pressure and relative limit]
\label{thm:finite-renormalized}
There exists $\varepsilon_q>0$ such that, on $|u|<\varepsilon_q$, both limits
\eqref{eq:pressurelimit}--\eqref{eq:relativelimit} exist locally uniformly and
\begin{align}
 p(u)&=\log Z_{\rm cell}(u),\label{eq:finitepressureexact}\\
 Z_{\rm rel}(u)&=Z_{\rm root}(u)
 \prod_{P\in\mathcal P_{\rm cell}}
 (1-w_Pu^{\ell_P})^{m_P-1}.
 \label{eq:finiterelativeexact}
\end{align}
In particular $[u^4]p(u)=q(q-1)$, as forced by \eqref{eq:linearN}.
\end{theorem}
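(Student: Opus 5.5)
Since $B_{\le N}$ is a finite matrix, \eqref{eq:finitezeta} gives the Euler product
\[
 \det(I-uB_{\le N})^{-1}=\prod_{[C]\subset\{h\le N\}}(1-W(C)u^{\ell(C)})^{-1},
\]
taken over primitive cycles all of whose edge origins have height at most $N$. Two points need care. First, truncation by $\Pi_N$ can only remove transitions, so cycles surviving in $B_{\le N}$ keep their full weights. Second, the product converges for small $|u|$, uniformly in $N$, because every row sum of $B_{\le N}$ is at most $q+1$. We then split these cycles exactly as in \eqref{eq:primitivefactorization}.

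\textbf{Root cycles.} These are the cycles meeting $v_0$. They contribute $Z_{\rm root}^{(N)}(u)$, whose factors agree with those of $Z_{\rm root}(u)$ for cycles of length at most about $N$. A Weierstrass tail bound $C(q+1)^m$ then gives $Z_{\rm root}^{(N)}\to Z_{\rm root}$ uniformly on $|u|\le r<(q+1)^{-1}$.

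\textbf{Translated cell cycles.} For each $P\in\mathcal P_{\rm cell}$, the translate $P_n$ has maximal origin height $m_P+(n-1)$, by horizontal translation. So $P_n$ survives the cutoff exactly when $n\le N-m_P+1$, giving $\max(N-m_P+1,0)$ copies. This count must be verified carefully. The intended meaning of $m_P$ is the maximum origin height, and the formula must reproduce $N-1$ copies of $C_1$, which by \eqref{eq:linearN} has $m_{C_1}=2$. Hence
\[
 \log\det(I-uB_{\le N})^{-1}=\log Z_{\rm root}^{(N)}(u)-\sum_{P}\max(N-m_P+1,0)\log(1-w_Pu^{\ell_P}).
\]
Split $\max(N-m_P+1,0)=N-(m_P-1)+(m_P-1-N)_+$. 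The $N$-term gives exactly $N\log Z_{\rm cell}(u)$. The $-(m_P-1)$ term gives the product $\prod_P(1-w_Pu^{\ell_P})^{m_P-1}$ in \eqref{eq:finiterelativeexact}. The remainder involves only $P$ with $m_P>N+1$.

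\textbf{Convergence.} The main obstacle is that all three series must converge uniformly, which rests on two bounds. The first is $m_P\le 1+\ell_P$, as in \eqref{eq:comb-height-bound}. The second is $\sum_{\ell_P=m}w_P\le(q+1)^m$, from \eqref{eq:anchored-path-bound}. Since $m_P>N+1$ forces $\ell_P>N$, the remainder is bounded by
\[
 \sum_{m>N}(1+m)\,C\bigl((q+1)|u|\bigr)^m\to0.
\]
The $m_P-1$ weighted series converges by the same estimate. The linear coefficient in $N$, namely $N\log Z_{\rm cell}(u)$, must be kept separate from the bounded remainder, and only this separation needs uniformity. Taking $\varepsilon_q$ smaller than $(q+1)^{-1}$ if necessary keeps all logarithms on principal branches, with $|w_Pu^{\ell_P}|<1/2$ uniformly; this is where the small disk enters.

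\textbf{Conclusion.} Dividing by $N$ gives \eqref{eq:finitepressureexact}. Multiplying by $e^{-Np(u)}$ gives \eqref{eq:finiterelativeexact}. Finally, $[u^4]p=[u^4]\log Z_{\rm cell}=q(q-1)$ by \eqref{eq:cellfirst}, which matches \eqref{eq:linearN}.
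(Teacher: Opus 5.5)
Your proposal is correct and follows the paper's proof. Both arguments use the finite Euler product for $B_{\le N}$, the split into root and translated-cell cycles, the translate count $N-m_P+1$, and normal convergence from $m_P\le 1+\ell_P$ together with the anchored path bound $(q+1)^m$. Your explicit splitting $\max(N-m_P+1,0)=N-(m_P-1)+(m_P-1-N)_+$ and the bound on the remainder term is more careful than the paper, which handles cycles too tall to fit in the cutoff only implicitly through its remark on normal convergence.
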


\begin{proof}
An anchored cycle $P$ translated to minimal top coordinate $n$ lies in the
height-$N$ ball precisely when $m_P+n-1\le N$.  Hence, for $N\ge m_P$, it has
exactly $N-m_P+1$ allowed translates.  The primitive-cycle factorization of
the finite determinant therefore has a root part $Z_{{\rm root},N}(u)$ and the
bulk exponents $N-m_P+1$.

A root-meeting cycle of length $m$ cannot reach height greater than $m$, so
the corresponding coefficients stabilize once $N\ge m$.  The root path bound
$C(q+1)^m$ used in the proof of \cref{thm:renormalizedlimit} then gives,
for sufficiently small $|u|$,
\[
 Z_{{\rm root},N}(u)\longrightarrow Z_{\rm root}(u)
\]
locally uniformly, independently of the bulk translation count.

For the bulk part, dividing its logarithm by $N$ gives the Euler product for
$\log Z_{\rm cell}$; multiplying by $Z_{\rm cell}^{-N}$ leaves the exponents
$m_P-1$.  Since $m_P\le1+\ell_P$, the exponential path bound used in
\cref{thm:renormalizedlimit} gives normal convergence for sufficiently small
$|u|$ and justifies passage to the limit.
\end{proof}

The two complete renormalizations retain different finite geometric data.
The sharp height cutoff uses the maximal height $m_P$, whereas Abel height
damping uses the integrated height $\eta_P$ and produces the dilogarithmic
counterterm \eqref{eq:pressure-counterterm}.  Their bulk terms are related by
\eqref{eq:pressure-integral}:
\[
 \mathscr P(u)=\int_0^u\frac{p(t)}t\,dt.
\]

\section{Thermodynamic consequences and concluding remarks}
\label{sec:thermo}

\subsection{Ruelle--Perron--Frobenius consequences}

Geodesic flow on a tree quotient admits a coding by a locally compact
countable-state Markov shift
\cite{BroisePaulinCoding2007,BroisePaulinModular2007,
BroiseParkkonenPaulin2019,BroiseParkkonenPaulin2021}.
The potential $\phi_q$ of \cref{def:multiplicity-potential} retains the number
of parallel symbolic branches in this coding.  By
\cref{def:multiplicity-potential,thm:exact-pressure-gap}, the squared root
component is mixing and $\phi_q$ is a bounded, locally constant strongly
positive recurrent potential.  It therefore lies in the setting of Sarig,
Cyr--Sarig and R\"uhr--Sarig
\cite{Sarig1999,Sarig2003,SarigPhase2001,CyrSarig2009,RuhrSarig2022}.

For reference, the associated Ruelle transfer operator is
\begin{equation}\label{eq:ruelle-operator}
 (\mathcal L_{\phi_q}g)(x)
 =\sum_{\sigma_2 y=x}e^{\phi_q(y)}g(y).
\end{equation}

\begin{corollary}[Equilibrium state and spectral gap]
\label{cor:rpf-consequences}
The potential $\phi_q$ admits a unique Ruelle--Perron--Frobenius equilibrium
probability measure $\mu_{\phi_q}$ on $\Sigma^{(2),+}_{a_0}$.  More precisely,
there are a positive eigenfunction $h_q$ and an eigenmeasure $\nu_q$, normalized
by $\int h_q\,d\nu_q=1$, such that
\[
 \mathcal L_{\phi_q}h_q=e^{P_G(\phi_q)}h_q,
 \qquad
 \mathcal L_{\phi_q}^{*}\nu_q=e^{P_G(\phi_q)}\nu_q,
 \qquad
 \mu_{\phi_q}=h_q\nu_q.
\]
The normalized transfer operator
\begin{equation}\label{eq:normalized-ruelle-operator}
 \widehat{\mathcal L}_{\phi_q}g
 =e^{-P_G(\phi_q)}h_q^{-1}
   \mathcal L_{\phi_q}(h_qg)
\end{equation}
has a spectral-gap realization on the Banach space constructed by Cyr--Sarig
\cite{CyrSarig2009}.  Consequently the exponential correlation bounds proved
there apply to the corresponding classes of observables.
\end{corollary}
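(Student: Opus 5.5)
The plan is to verify the hypotheses of the standard Sarig/Cyr--Sarig theory for the pair $(\Sigma^{(2),+}_{a_0},\phi_q)$ and then quote the theorems, not to build the eigendata by hand.

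\emph{Hypotheses.} First I check that the one-sided squared root component is a topologically mixing, locally compact countable Markov shift. Irreducibility holds by construction of $\cC_{a_0}$. Aperiodicity comes from the positive two-step first return $f_2(q)>0$ in \eqref{eq:f2}, which gives a loop of period one at $a_0$ in $\Sigma^{(2),+}_{a_0}$. Local compactness follows because every row of $B^2$ has finitely many nonzero entries, by local finiteness of $Y_q$. Next, $\phi_q$ depends on two coordinates and satisfies $0\le\phi_q\le2\log(q+1)$, so it is bounded, weakly H\"older, and has summable variations. Its Gurevich pressure $P_G(\phi_q)=2\log(q+1)$ is finite by \cref{thm:exact-pressure-gap}.

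\emph{Recurrence and the RPF measure.} By \cref{thm:exact-pressure-gap}, $P_\infty(\phi_q)<P_G(\phi_q)$. By \cite[Theorem~8.2]{RuhrSarig2022} this is equivalent to strong positive recurrence, and strong positive recurrence implies positive recurrence. Sarig's generalized Ruelle--Perron--Frobenius theorem \cite{Sarig2001} then supplies the data for a positive recurrent potential with summable variations and finite pressure:
\begin{itemize}
\item a positive continuous $h_q$ and a conservative Radon measure $\nu_q$ with $\mathcal L_{\phi_q}h_q=e^{P_G}h_q$ and $\mathcal L^*_{\phi_q}\nu_q=e^{P_G}\nu_q$;
\item the finiteness $\int h_q\,d\nu_q<\infty$, which is exactly positive recurrence.
\end{itemize}
After normalizing $\int h_q\,d\nu_q=1$, the product $\mu_{\phi_q}=h_q\nu_q$ is a shift-invariant probability measure.

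\emph{Uniqueness.} Uniqueness of the equilibrium measure requires $\sup\phi_q<\infty$, so that $h_{\mu}(\sigma_2)+\int\phi_q\,d\mu$ makes sense, and finite entropy. Boundedness of $\phi_q$ handles both points. Buzzi--Sarig uniqueness then identifies $\mu_{\phi_q}$ as the unique equilibrium state.

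\emph{Spectral gap (main obstacle).} The main difficulty is matching Cyr--Sarig's hypotheses exactly. Their theorem \cite{CyrSarig2009} states that for a topologically mixing countable Markov shift and a weakly H\"older potential of finite pressure, strong positive recurrence is equivalent to the normalized operator \eqref{eq:normalized-ruelle-operator} having a spectral gap on a suitable Banach space of functions.

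Three points need care:
\begin{itemize}
\item Cyr--Sarig's notion of SPR, defined through discriminants or induced pressure, must coincide with the pressure-gap form used here. I would cite \cite{RuhrSarig2022} for this equivalence.
\item They may require normalization $P_G=0$ and $\sup\phi<\infty$. I would arrange this by replacing $\phi_q$ with $\phi_q-P_G(\phi_q)$, which does not change the normalized operator.
\item Weak H\"older regularity is immediate, because $\operatorname{var}_n(\phi_q)=0$ for $n\ge2$.
\end{itemize}

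With these checked, the spectral-gap realization and the resulting exponential decay of correlations for the stated observable classes follow directly from their theorem.
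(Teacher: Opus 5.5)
Your proposal is correct and takes essentially the same route as the paper. You verify mixing, boundedness, weak H\"older regularity and finite pressure, turn the gap $P_\infty(\phi_q)<P_G(\phi_q)$ into strong positive recurrence via R\"uhr--Sarig, and then quote Sarig's generalized Ruelle--Perron--Frobenius theorem, Buzzi--Sarig uniqueness and the Cyr--Sarig spectral-gap theorem; the paper is slightly more explicit only in noting that boundedness gives $\int\phi_q\,d\mu_{\phi_q}>-\infty$, which certifies $h_q\nu_q$ as an equilibrium state before uniqueness is invoked.
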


\begin{proof}
The potential is weakly H\"older, has finite pressure and satisfies the exact
strong-positive-recurrence gap.  The Ruelle--Perron--Frobenius and spectral-gap
conclusions follow from
\cite{Sarig1999,SarigPhase2001,CyrSarig2009,RuhrSarig2022}; see also
\cite{Sarig2003} for Gibbs-measure existence.  Boundedness gives
$\int\phi_q\,d\mu_{\phi_q}>-\infty$, so this probability measure is an
equilibrium state, and uniqueness follows from \cite{BuzziSarig2003}.
\end{proof}

\subsection{Local recurrence and orbit counting}

For a finite observation set $E$ in the present bipartite model, introduce
$v=u^2$ and write
\[
 \widehat Z_E(v)=Z_E(\sqrt v).
\]
This is independent of the choice of square-root branch because $Z_E$ is even.
Define the weighted \emph{local orbit coefficients} $N_E(n)$ by
\begin{equation}\label{eq:NE-definition}
 v\frac{d}{dv}\log\widehat Z_E(v)
 =\sum_{n\ge1}N_E(n)v^n.
\end{equation}
If $\mathcal P_E$ is the set of primitive cycle classes meeting $E$ and
$n(C)=\ell(C)/2$, the local Euler product gives
\begin{equation}\label{eq:NE-orbit-meaning}
 N_E(n)=\sum_{d\mid n}d
 \sum_{\substack{[C]\in\mathcal P_E\\n(C)=d}}W(C)^{n/d}.
\end{equation}
Thus $N_E(n)$ is the multiplicity-weighted count of periodic points lying on
orbit classes that meet $E$; it is not the based count $(B^{2n})_{e,e}$ at a
specified present state.

\begin{theorem}[Local-orbit asymptotics]
\label{thm:local-orbit-counting}
For the squared root component $\Sigma^{(2)}_{a_0}$, put
\begin{equation}\label{eq:Rq-sigmaq}
 R_q=\frac1{(q+1)^2},
 \qquad
 \sigma_q=\min\left\{\frac1{4q},
 \frac2{q^2+2q+2}\right\}.
\end{equation}
Then $\sigma_q>R_q$, and there exists $R_{1,q}$ with
$R_q<R_{1,q}<\sigma_q$ such that
\begin{equation}\label{eq:local-pole-factorization}
 \widehat Z_{a_0}(v)
 =\frac{G_q(v)}{1-v/R_q}
 \qquad(|v|<R_{1,q}),
\end{equation}
where $G_q$ is holomorphic and nonzero.  Consequently, for every
$R_2\in(R_q,R_{1,q})$,
\begin{equation}\label{eq:periodic-point-asymptotic}
 N_{a_0}(n)=(q+1)^{2n}+O(R_2^{-n}).
\end{equation}
\end{theorem}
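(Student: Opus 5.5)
We pass to $v=u^2$ and use the explicit algebraic data from the proof of \cref{thm:exact-dominant-pole}. All of $\alpha,\beta,\gamma,\delta$ are $u$ times rational functions of $t=u^2=v$. By \eqref{eq:xclosed}, $x$ is therefore $u$ times a function of $v$ built from rational functions of $v$ and the square root of \eqref{eq:discriminant-factorization}. Likewise, \eqref{eq:K-simplified} shows that $K$ is $u$ times a rational function of $v$. Hence $\widehat\cF(v):=K(\sqrt v)x(\sqrt v)$ is a well-defined germ in $v$.

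\textbf{Step 1: holomorphy in $|v|<\sigma_q$.} The only possible singularities of $\widehat\cF$ are the following:
\begin{itemize}
\item the zero $v=1/(4q)$ of the discriminant factor $1-4qv$ (the factor $(1-v)^2$ is a perfect square and contributes no branching);
\item the pole $v=2/(q^2+2q+2)$ of $K$;
\item the pole $v=1/(q+1)$ of $V$, which cancels in $x$ and lies beyond $\sigma_q$ anyway;
\item zeros of the denominator $\Delta+\sqrt{\cdots}$ in \eqref{eq:xclosed}.
\end{itemize}
For the last item we argue as follows. The numerator $\alpha$ has no pole in the disk. Moreover, $x$ has nonnegative coefficients and is finite on $(0,u_{\rm br})$. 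By Pringsheim's theorem combined with the quadratic \eqref{eq:xquadratic}, $x$ is analytic for $|u|<u_{\rm br}$. So $\widehat\cF$ is holomorphic on $|v|<\sigma_q$.

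The inequality $\sigma_q>R_q$ reduces to two elementary comparisons:
\[
(q+1)^2>4q \iff (q-1)^2>0, \qquad (q+1)^2>\tfrac{q^2+2q+2}{2} \iff q^2+2q>0 .
\]

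\textbf{Step 2: a unique simple pole.} By \eqref{eq:critical-Kx}, $1-\widehat\cF(R_q)=0$. This zero is simple, because the derivative of $Kx$ at $u_0$ is positive (already shown in \cref{thm:exact-dominant-pole}). The earlier triangle-inequality argument gives $|\widehat\cF(v)|<1$ for $|v|\le R_q$ with $v\ne R_q$. This uses aperiodicity of the coefficients in $v$: the $v^1$ and $v^2$ coefficients are positive by \eqref{eq:f2}--\eqref{eq:f4}.

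Zeros of the holomorphic function $1-\widehat\cF$ in the disk $|v|<\sigma_q$ are isolated. Together with the strict bound on the closed disk $|v|\le R_q$ away from $R_q$, this lets us choose $R_{1,q}\in(R_q,\sigma_q)$ such that $R_q$ is the only zero in $|v|<R_{1,q}$. Concretely, take $R_{1,q}$ smaller than the modulus of the next zero, using compactness of the closed annulus. Set
\[
G_q(v)=\frac{1-v/R_q}{1-\widehat\cF(v)}.
\]
Then $G_q$ is holomorphic and nonzero on $|v|<R_{1,q}$, which gives \eqref{eq:local-pole-factorization}.

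\textbf{Step 3: the asymptotic.} Taking logarithmic derivatives,
\[
v\frac{d}{dv}\log\widehat Z_{a_0}(v)=\frac{v/R_q}{1-v/R_q}+v\frac{G_q'(v)}{G_q(v)} .
\]
The first term has $n$th coefficient $R_q^{-n}=(q+1)^{2n}$. The second term is holomorphic on $|v|<R_{1,q}$ because $G_q$ has no zeros there. Cauchy's estimate on the circle $|v|=R_2$ then bounds its coefficients by $O(R_2^{-n})$, yielding \eqref{eq:periodic-point-asymptotic}.

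\textbf{Main obstacle.} The delicate point is Step 1: justifying that the chosen branch of $x$, and hence $\widehat\cF$, continues holomorphically throughout the full complex disk $|v|<\sigma_q$. This requires that the denominator in \eqref{eq:xclosed} does not vanish there, and that the branch chosen at the origin is the one realized along every path. I would settle this by expressing $x$ through the square root of \eqref{eq:discriminant-factorization}, which is single-valued on the disk since the only branching factor $1-4qv$ has no zero inside. I would then check that the two roots of \eqref{eq:xquadratic} never collide there, because the discriminant is nonzero.
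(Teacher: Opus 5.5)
Your proposal is correct and follows the paper's proof essentially step for step. Like the paper, you obtain holomorphy of $\widehat{\cF}_{a_0}(v)=v\,\xi_q(v)k_q(v)$ on $|v|<\sigma_q$ from the branch point $v=1/(4q)$ and the pole of $K$, take the simple zero at $R_q$ (with no other zero on $|v|=R_q$) from \cref{thm:exact-dominant-pole}, use isolation of zeros to choose $R_{1,q}$, and finish with Cauchy's estimate on the logarithmic derivative. Your Pringsheim/discriminant justification that the branch through $x(0)=0$ is holomorphic on $|u|<u_{\rm br}$ only spells out what the paper asserts in one line; it is quicker after rationalizing $x=(\Delta-\sqrt{\Delta^2-4\alpha\delta})/(2\delta)$, since the denominator $\Delta+\sqrt{\cdot}$ then never produces a pole.
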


\begin{proof}
Let $\widehat{\cF}_{a_0}(v)=\cF_{a_0}(\sqrt v)$.  Equations
\eqref{eq:xclosed}, \eqref{eq:discriminant-factorization} and
\eqref{eq:K-simplified} have the form $x(u)=u\xi_q(u^2)$ and
$K(u)=u k_q(u^2)$.  The first possible branch singularity of $\xi_q(v)$ is
$v=(4q)^{-1}$, while the first pole of the rational function $k_q(v)$ is
$v=2/(q^2+2q+2)$.  Thus
$\widehat{\cF}_{a_0}(v)=v\xi_q(v)k_q(v)$ is holomorphic for
$|v|<\sigma_q$.  Both entries in the minimum defining $\sigma_q$ are larger
than $R_q$: the two inequalities reduce respectively to
$(q-1)^2>0$ and $q(q+2)>0$.

By \eqref{eq:critical-Kx} and \cref{thm:exact-dominant-pole},
$1-\widehat{\cF}_{a_0}$ has a simple zero at $R_q$ and no other zero on
$|v|=R_q$.  Since $\widehat{\cF}_{a_0}$ is holomorphic on
$|v|<\sigma_q$, one may choose
$R_{1,q}\in(R_q,\sigma_q)$ with no other zero in $|v|<R_{1,q}$.
Removing the simple factor $1-v/R_q$ gives the holomorphic nonvanishing
function $G_q$ in \eqref{eq:local-pole-factorization}.  Taking the logarithmic
derivative gives
\[
 v\frac{d}{dv}\log\widehat Z_{a_0}(v)
 =\sum_{n\ge1}R_q^{-n}v^n+v\frac{G_q'(v)}{G_q(v)}.
\]
Cauchy's estimate on $|v|=R_2<R_{1,q}$ proves
\eqref{eq:periodic-point-asymptotic}.
\end{proof}

\subsection{Outlook and conclusion}
\label{sec:questions}

Three directions remain: analytic continuation of the finite part, comparison
of genuinely different regularization schemes, and its relation to
group-theoretic orbit data.

The analytic germs $\mathscr P$, $\mathscr H$ and $Z_{\rm ren}$ should next be
continued along the algebraic Riemann surface determined by the germ
$Z_{\rm cell}$.  A natural tool would be a two-variable cell determinant
$Z_{\rm cell}(u,s)$, in which $s$ records integrated height; its singular set
may locate the first singularities of the finite part and relate them to
pressure at infinity.  The elementary height changes treated in
\cref{thm:height-covariance} are now controlled; a complementary problem is to
compare Abel and sharp finite parts under nonlinear changes of height or of a
stationary reference tail.

At the group level, edge indices alone do not determine the stabilizer and
centralizer corrections needed for an Euler product over primitive hyperbolic
conjugacy classes.  Establishing such a comparison, possibly with explicit
double-coset factors, is a separate problem; recent local-to-global determinant
formulas for tree actions offer a possible starting point
\cite{Marchionna2026}.  Extending the renormalized limit beyond stationary
cusp families will likewise require counterterms adapted to the growth and
accumulation of ends
\cite{BroiseParkkonenPaulin2019,BroiseParkkonenPaulin2021}.

Thus the failed global zeta leaves two computable invariants: an algebraic
local determinant for finite-core recurrence and a height-covariant
renormalized contribution from infinity.

\subsection*{Declaration of generative AI use}

During the preparation of this manuscript, the author used OpenAI ChatGPT and
Codex to assist with readability, Tikz drafting, and preliminary consistency checks.  The author independently
reviewed and verified the mathematical content and citations, made all final
revisions, and takes full responsibility for the manuscript.

\providecommand{\bysame}{\leavevmode\hbox to3em{\hrulefill}\thinspace}
\providecommand{\MR}{\relax\ifhmode\unskip\space\fi MR }
\providecommand{\MRhref}[2]{%
  \href{http://www.ams.org/mathscinet-getitem?mr=#1}{#2}
}
\providecommand{\href}[2]{#2}

\end{document}